\newcommand*{\mailto}[1]{\href{mailto:#1}{\nolinkurl{#1}}}
\newtheorem{theorem}{Theorem}[section]
\newtheorem{definition}[theorem]{Definition}
\newtheorem{lemma}[theorem]{Lemma}
\newtheorem{proposition}[theorem]{Proposition}
\newtheorem{corollary}[theorem]{Corollary}
\newtheorem{remark}[theorem]{Remark}
\newcommand{\R}{{\mathbb R}}
\newcommand{\N}{{\mathbb N}}
\newcommand{\C}{{\mathbb C}}
\newcommand{\spr}[2]{\langle #1 , #2 \rangle}
\newcommand{\E}{\mathrm{e}}
\newcommand{\I}{\mathrm{i}}
\newcommand{\loc}{{\mathrm{loc}}}
\newcommand{\cc}{{\mathrm{c}}}
\newcommand{\res}[1]{\mathrm{res}_{#1}\,}
\newcommand{\OO}{\mathcal{O}}
\newcommand{\cprod}{\prod^{\leftrightarrow}}
\newcommand{\csum}{\sum^{\leftrightarrow}}
\newcommand{\ledot}{\,\cdot\,}
\newcommand{\redot}{\cdot\,}
\newcommand{\Iso}[1]{\mathrm{Iso}(#1)}
\newcommand{\qd}{{[1]}}
\newcommand{\qpm}{{[\pm]}}
\newcommand{\qp}{{[+]}}
\newcommand{\qm}{{[-]}}
\newcommand{\dip}{\upsilon}
\newcommand{\D}{\mathcal{D}}
\newcommand{\De}{\mathcal{E}}
\newcommand{\Ar}{\mathsf{a}}
\numberwithin{equation}{section}
\begin{document}

\title[The inverse spectral transform]{The inverse spectral transform for the conservative Camassa--Holm flow with decaying initial data}
 
\author[J.\ Eckhardt]{Jonathan Eckhardt}
\address{Faculty of Mathematics\\ University of Vienna\\ Oskar-Morgenstern-Platz 1\\ 1090 Wien\\ Austria}
\email{\mailto{jonathan.eckhardt@univie.ac.at}}
\urladdr{\url{http://homepage.univie.ac.at/jonathan.eckhardt/}}

\thanks{\href{http://dx.doi.org/10.1007/s00205-016-1066-z}{Arch.\ Ration.\ Mech.\ Anal.\ {\bf 224} (2017), no.~1, 21--52}}
\thanks{{\it Research supported by the Austrian Science Fund (FWF) under Grant No.\ J3455}}

\keywords{Camassa--Holm flow, inverse spectral transform, completely integrable systems}
\subjclass[2010]{Primary 37K15, 34L05; Secondary 34A55, 35Q51}

\begin{abstract}
 We establish the inverse spectral transform for the conservative Camassa--Holm flow with decaying initial data.
 In particular, it is employed to prove existence of weak solutions for the corresponding Cauchy problem. 
\end{abstract}

\maketitle

\section{Introduction}

{\sf T}he {\em inverse scattering transform} \cite{abcl91, abse81, ecvH81} is a powerful tool for solving initial value problems for certain nonlinear partial differential equations. 
 Originally developed for the Korteweg--de Vries equation by Gardner, Greene, Kruskal and Miura \cite{gagrkrmi67}, this method has since been successfully extended to various other completely integrable equations.
 It is the aim of the present article to establish the corresponding transform for the Camassa--Holm equation
  \begin{equation}\label{eqnCH}
   u_{t} -u_{xxt}  = 2u_x u_{xx} - 3uu_x + u u_{xxx} 
  \end{equation}
  with decaying initial data. 
  Due to the vast amount of literature on this equation, we only refer to a selection of articles  \cite{besasz00, bosh06, brco07, co01, coes98, como00, cost00, geho08, hora07, le05b, mc04, mc03, xizh00} containing further information. 
  The relevance of the Camassa--Holm equation stems from the fact that it constitutes a model for unidirectional wave propagation on shallow water \cite{caho93, cola09, io07, jo02}. 
  Unlike the Korteweg--de Vries equation, it allows for smooth solutions to blow up in finite time in a way that resembles wave-breaking. 
 This process has been described in detail \cite{coes98, coes98b, mc04, mc03} and is known to only happen when the quantity $\omega=u-u_{xx}$ changes sign. 
 Compared to the rather tame sign-definite case (which shares a lot of similarities with the Korteweg--de Vries equation \cite{besasz98, le04, mc03b}), indefiniteness of $\omega$ causes serious complications (for example noticeable from the discussions in \cite{comc99, ka06, le05}). 
  Many of these problems are already apparent for the prototypical example of a peakon-antipeakon collision \cite{besasz01, coes98, wa06}, which is a special case of so-called multi-peakon solutions \cite{besasz00}. 

{\sf O}n the other side, the Camassa--Holm equation is known to be formally completely integrable in the sense that there is an associated isospectral problem  
\begin{align}\label{eqnISPcla}
 -f'' + \frac{1}{4} f = z\, \omega f, 
\end{align}
where $z$ is a complex spectral parameter.  
Solving the corresponding inverse problem is essentially equivalent to solving the initial value problem for the Camassa--Holm equation. 
For this reason, it is not surprising that the encountered complications due to wave-breaking for indefinite $\omega$ reoccur when dealing with this inverse problem. 
 And in fact, despite a large amount of articles, very little is known about the inverse problem for~\eqref{eqnISPcla} in the indefinite case and almost the entire literature on this subject restricts to strictly positive and smooth $\omega$ (in which case the spectral problem can be transformed into a standard form that is known from the Korteweg--de Vries equation  \cite{besasz98, le04, mc03b}).  
Apart from the explicitly solvable finite dimensional case \cite{besasz00, ConservMP}, only insufficient partial uniqueness results \cite{be04, bebrwe08, bebrwe12, bebrwe15, LeftDefiniteSL, CHPencil, IsospecCH} have been obtained so far for the inverse problem in the indefinite case.  

{\sf M}aking use of recent progress on the inverse spectral problem for indefinite strings in \cite{IndefiniteString}, we will be able to overcome these difficulties and establish the inverse spectral transform for the more general two-component Camassa--Holm system
 \begin{align}
 \begin{split}\label{eqnmy2CH}
  u_t + u u_x + P_x & = 0, \\
  \mu_t + (u\mu)_x & = (u^3 - 2Pu)_x, 
 \end{split}
 \end{align}
 where the auxiliary function $P$ satisfies
 \begin{align}
  P - P_{xx} & = \frac{u^2+ \mu}{2},
 \end{align} 
 for a class $\De$ of decaying initial data to be defined below; this should be compared to the definition of the set $\D$ in \cite[Section~6]{brco07}, \cite[Definition~3.1]{hora07}, \cite[Definition~4.1]{grhora12}.  
 Note that the two-component Camassa--Holm system~\eqref{eqnmy2CH} is 
 equivalent to its more commonly used incarnation in \cite{chlizh06, coiv08, esleyi07, grhora12, hoiv11} (cf.\ \cite[Equation~(4.2)]{hora07}), which is known to contain the Camassa--Holm equation as a special case.

\begin{definition}
The set $\D$ consists of all pairs $(u,\mu)$ such that $u$ is a real-valued function in $H^1(\R)$ and $\mu$ is a non-negative finite Borel measure on $\R$ with
\begin{align}\label{eqnmuac}
 \mu(B) \geq \int_B u(x)^2 + u'(x)^2\, dx 
\end{align}
for every Borel set $B\subseteq\R$.
\end{definition}

{\sf A}ssociated with each pair $(u,\mu)\in\D$ is a distribution $\omega$ in $H^{-1}(\R)$ defined by 
\begin{align}\label{eqnDefomega}
 \omega(h) = \int_\R u(x)h(x)dx + \int_\R u'(x)h'(x)dx, \quad h\in H^1(\R),
\end{align}
so that $\omega = u - u''$ in a distributional sense,  as well as a non-negative finite Borel measure $\dip$ on $\R$ defined such that   
\begin{align}\label{eqnDefdip}
  \mu(B) =  \int_B u(x)^2 + u'(x)^2\, dx  + \dip(B) 
\end{align}
for every Borel set $B\subseteq\R$.
 Let us point out that it is always possible to uniquely recover the pair $(u,\mu)$ from the distribution $\omega$ and the Borel measure $\dip$.
 In fact, the  function $u$ at any point $x\in\R$ can be written as  
 \begin{align}
  u(x) & = \omega(\delta_x), & \delta_x(s) & = \frac{1}{2} \E^{-|x-s|}, \quad s\in\R,
 \end{align}
 which then allows us to determine the Borel measure $\mu$ as well from~\eqref{eqnDefdip}.   
 Now the class $\De$ is defined by imposing an additional growth restriction on pairs in $\D$. 

\begin{definition}
The set $\De_\pm$ consists of all pairs $(u,\mu)\in\D$ such that 
\begin{align}\label{eqnDedef}
 \int_\R \E^{\pm x} \left(u'(x)\mp u(x)\right)^2 dx +  \int_\R \E^{\pm x} d\dip(x) < \infty. 
\end{align}
Furthermore, the set $\De$ is defined as the intersection of $\De_+$ and $\De_-$. 
\end{definition}

{\sf J}ustified by the Lax pair formulation of the two-component Camassa--Holm system in \cite{chlizh06, coiv08, hoiv11} and the results in \cite{ConservMP}, we will consider the spectral problem  
\begin{align}\label{eqnISP}
 -f'' + \frac{1}{4} f = z\, \omega f + z^2 \dip f. 
\end{align}
Due to the low regularity of the coefficients, it is not clear how this differential equation has to be interpreted, which is why we clarify this matter in Appendix~\ref{appRelCan}. 
Basic properties of the spectral problem (like realness and discreteness of the spectrum $\sigma$ for example) will be discussed in Section~\ref{secSP} along with some further necessary conditions. 
In the following section, we will then solve the corresponding inverse problem for the class of coefficients corresponding to $\De$, giving a complete characterization of all possible spectral data. 
More precisely, we will establish a one-to-one correspondence between $\De$ and a class of spectral data explicitly described by Theorem~\ref{thmIP}, which will be shown to be a homeomorphism with respect to suitable topologies.
Finally, in Section~\ref{secCCH} we will introduce {\em the conservative Camassa--Holm flow} as a certain isospectral flow on $\De$ and show that its integral curves define weak solutions of the two-component Camassa--Holm system. 
 
{\sf A}s an immediate consequence of the solution of the inverse spectral problem in Theorem~\ref{thmIP}, we will see that the phase space $\De$ decomposes into a foliation of invariant isospectral sets $\Iso{\sigma}$, each of which can be parametrized by the set\footnote{We denote with $\R^\sigma$ the set of all real-valued sequences $\kappa = \{\kappa_\lambda\}_{\lambda\in\sigma}$ indexed by $\sigma$.}  
 \begin{align}
  \left\lbrace \kappa\in\R^\sigma \,\left|\; \sum_{\lambda\in\sigma} \frac{1}{\lambda^2} \frac{\E^{|\kappa_\lambda|}}{|\lambda \dot{W}(\lambda)|}  < \infty \right.\right\rbrace, 
 \end{align} 
 where $W$ is the entire function defined by the infinite product in~\eqref{eqncproddef} and the dot denotes differentiation.
 In terms of these coordinates, the conservative Camassa--Holm flow on each set $\Iso{\sigma}$ becomes the simple linear flow given by   
\begin{align}
 \kappa_\lambda' = \frac{1}{2\lambda}, \quad \lambda\in\sigma. 
\end{align}
These facts are reminiscent of the fact that the conservative Camassa--Holm flow can be viewed  as a completely integrable infinite dimensional Hamiltonian system.

\subsection*{Notation} 
  For integrals of a function $f$ which is locally integrable with respect to a Borel measure $\nu$ on an interval $I$, we will employ the convenient notation 
\begin{align}\label{eqnDefintmu}
 \int_x^y f\, d\nu = \begin{cases}
                                     \int_{[x,y)} f\, d\nu, & y>x, \\
                                     0,                                     & y=x, \\
                                     -\int_{[y,x)} f\, d\nu, & y< x, 
                                    \end{cases} \qquad x,\,y\in I, 
\end{align}
 rendering the integral left-continuous as a function of $y$. 
 If the function $f$ is locally absolutely continuous on $I$ and $g$ denotes a left-continuous distribution function of the Borel measure $\nu$, then we have the integration by parts formula 
\begin{align}\label{eqnPI}
  \int_{x}^y  f\, d\nu = \left. g f\right|_x^y - \int_{x}^y g(s) f'(s) ds, \quad x,\,y\in I,
\end{align}
 which will be used frequently throughout this article. 

 Given a discrete set $\sigma$ of nonzero reals, we denote with $n_\sigma(r)$ the number of all $\lambda\in\sigma$ with modulus not greater than $r$.
 Furthermore, we introduce the notation
\begin{align}\label{eqncsumdef}
 \csum_{\lambda\in\sigma} \frac{1}{\lambda} = \lim_{r\rightarrow\infty} \mathop{\sum_{\lambda\in\sigma}}_{|\lambda|\leq r} \frac{1}{\lambda},
\end{align}
provided that the limit exists. 
Similarly, subject to existence, we shall write
\begin{align}\label{eqncproddef}
 \cprod_{\lambda\in\sigma} \biggl(1-\frac{z}{\lambda}\biggr) = \lim_{r\rightarrow\infty} \mathop{\prod_{\lambda\in\sigma}}_{|\lambda|\leq r} \biggl(1-\frac{z}{\lambda}\biggr), \quad z\in\C,
\end{align}
where the limit is meant to be taken in the topology of locally uniform convergence. 
The limit in~\eqref{eqncproddef} exists if and only if the limit in~\eqref{eqncsumdef} exists and the sum
\begin{align}
 \sum_{\lambda\in\sigma} \frac{1}{\lambda^2}
\end{align}
is finite. 
In this case, upon denoting  the entire function in~\eqref{eqncproddef}  with $W$, we have 
\begin{align}\label{eqncprodatzero}
 \csum_{\lambda\in\sigma} \frac{1}{\lambda} & = - \dot{W}(0), & \sum_{\lambda\in\sigma} \frac{1}{\lambda^2} & = \dot{W}(0)^2 - \ddot{W}(0).
\end{align}

 \section{The direct spectral problem}\label{secSP}

In the present section, we are going to introduce the spectral quantities that will linearize the conservative Camassa--Holm flow on $\De$ and derive their basic properties. 
To this end, we fix an arbitrary pair $(u,\mu)\in\De$ and first recall the definition of the distribution $\omega$ in~\eqref{eqnDefomega} as well as the one of the Borel measure $\dip$ in~\eqref{eqnDefdip}. 
 We introduce the spectrum $\sigma$ associated with the pair $(u,\mu)$ as the set of all those numbers $z\in\C$ for which there is a nontrivial solution $f$ of the differential equation
\begin{align}\label{eqnDE}
 - f'' + \frac{1}{4} f = z\, \omega f + z^2 \dip f 
\end{align}
 that belongs to $H^1(\R)$. 
 Regarding the precise meaning and some basic properties of this differential equation we refer to the discussion in Appendix~\ref{appRelCan}.

 A first consequence of the growth restriction on $(u,\mu)$ in~\eqref{eqnDedef} is the existence of particular solutions of the differential equation~\eqref{eqnDE} with prescribed asymptotics.

\begin{theorem}\label{thmThetaPhi}
For every $z\in\C$ there is a unique solution $\phi_\pm(z,\redot)$ of the differential equation~\eqref{eqnDE} with the asymptotics 
\begin{align}\label{eqnphiasym}
  \phi_\pm(z,x) & \sim \E^{\mp\frac{x}{2}}, \qquad x\rightarrow\pm\infty.  
\end{align}
The derivative of $\phi_\pm(z,\redot)$ is integrable and square integrable near $\pm\infty$. 
\end{theorem}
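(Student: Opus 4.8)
The plan is to regard the right-hand side of~\eqref{eqnDE} as a perturbation of the free equation $-f''+\frac14 f=0$, whose solutions $\E^{\mp x/2}$ are the exact analogues of the desired asymptotics, and to build $\phi_\pm$ by a Volterra iteration started from these free solutions. The reflection $x\mapsto-x$ carries~\eqref{eqnDE} into the same type of problem for the reflected data, interchanging $\De_+$ with $\De_-$ and $\phi_+$ with $\phi_-$; hence it suffices to construct $\phi_+$ and to study it near $+\infty$. Working with the reformulation of~\eqref{eqnDE} as a first-order system for $f$ together with its quasi-derivative $f^\qd$ provided in Appendix~\ref{appRelCan}, and using that the free solutions $\E^{x/2}$ and $\E^{-x/2}$ have Wronskian one, variation of parameters recasts the problem on a half-line $[x_0,\infty)$ as the integral equation
\begin{align}\label{eqnIEphiplus}
 \phi_+(z,x) = \E^{-x/2} + \int_x^\infty 2\sinh\!\Big(\frac{x-t}{2}\Big)\,\big(z\,d\omega(t)+z^2\,d\dip(t)\big)\,\phi_+(z,t).
\end{align}
Here the term involving $d\omega$ is read through the pairing~\eqref{eqnDefomega}: for fixed $x$ and $z$ the function $t\mapsto 2\sinh(\frac{x-t}{2})\phi_+(z,t)$ belongs to $H^1(x,\infty)$, and applying $\omega$ to it produces one integral against $u$ and one against $u'$ that also involves $\phi_+^\qd$. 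This is precisely why the natural object of the iteration is the pair $(\phi_+,\phi_+^\qd)$ rather than $\phi_+$ alone.

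Passing to the rescaled unknown $\phi_+(z,x)=\E^{-x/2}\Psi(z,x)$ turns the kernel into $\E^{x-t}-1$, which satisfies $|\E^{x-t}-1|\le 1$ for $t\ge x$, so that the leading term becomes $\Psi\equiv1$ and the iteration lives in the space of bounded continuous functions on $[x_0,\infty)$. The contribution of $\dip$ is then controlled directly by $\int_x^\infty\E^{t}\,d\dip(t)$, and the contribution of $\omega$, after the integration by parts described above, by $\big(\int_x^\infty\E^{t}(u'(t)-u(t))^2\,dt\big)^{1/2}$ via the Cauchy--Schwarz inequality; both quantities are finite by~\eqref{eqnDedef} and tend to $0$ as $x\to\infty$. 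Consequently the associated integral operator has operator norm at most $|z|\,C(x_0)$ with $C(x_0)\to0$, and for each fixed $z$ we may choose $x_0$ so large that this norm is $<1$. The contraction mapping principle then yields a unique bounded solution $\Psi(z,\redot)$, hence $\phi_+(z,\redot)$ on $[x_0,\infty)$ with the prescribed asymptotics~\eqref{eqnphiasym}; solving the (well-posed) initial value problem of Appendix~\ref{appRelCan} to the left extends $\phi_+(z,\redot)$ to all of $\R$. Uniqueness is built in: any solution obeying~\eqref{eqnphiasym} has bounded $\Psi$ near $+\infty$, so two of them differ by a fixed point of the linear operator and must coincide on $[x_0,\infty)$, and then everywhere by uniqueness for the initial value problem.

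For the final assertion I would read off the behaviour of the derivative from the companion component of the system. The quasi-derivative $\phi_+^\qd(z,\redot)$ satisfies an integral equation of the same shape and obeys $\phi_+^\qd(z,x)\sim-\frac12\E^{-x/2}$, which is integrable and square integrable on $[x_0,\infty)$. Since the ordinary derivative differs from $\phi_+^\qd$ only by a term of the form $z\,u\,\phi_+$ (together with a $\dip$-contribution), and $u\in L^2(\R)$ while $\phi_+(z,\redot)=\OO(\E^{-x/2})$ near $+\infty$, another application of the Cauchy--Schwarz inequality gives $\int_{x_0}^\infty|u\phi_+|<\infty$ and $\int_{x_0}^\infty|u\phi_+|^2<\infty$; hence $\phi_+'(z,\redot)$ is integrable and square integrable near $+\infty$.

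I expect the low regularity of the coefficients to be the main obstacle. Because $\omega$ is merely a distribution in $H^{-1}(\R)$ and $\dip$ only a measure, \eqref{eqnIEphiplus} cannot be handled as an ordinary integral equation against a potential of finite total variation; it must be interpreted via~\eqref{eqnDefomega}, which couples $\phi_+$ to $\phi_+^\qd$ and forces the iteration onto the pair. The convergence is therefore governed not by an $L^1$-bound but by the weighted $L^2$-quantities in~\eqref{eqnDedef}, so the decisive estimates rely on Cauchy--Schwarz; verifying that these weighted norms really do bound the iteration operator and decay at $+\infty$ is the crux, after which uniqueness, the analytic dependence on $z$ (from the Volterra structure), and the stated integrability of the derivative all follow routinely.
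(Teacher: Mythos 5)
Your construction is correct, and at its core it is the same Volterra iteration as the paper's, but organized in a genuinely different way. The paper never works with a contraction on a half-line $[x_0,\infty)$: it first changes variables via $\xi=\mp\E^{\mp x}$ (the diffeomorphism $\eta_\pm$), which turns $x=\pm\infty$ into the \emph{finite} endpoint $\xi=0$ of the measure differential equation \eqref{eqnCanSysString}, whose coefficients are exactly your weighted quantities: $\Ar_\pm$ from \eqref{eqnDefa} is square integrable near $\xi=0$ and $\beta_\pm$ from \eqref{eqnDefbeta} is finite there, precisely because of \eqref{eqnDedef}. Existence, uniqueness, \emph{entirety in $z$}, and the global Gronwall bound \eqref{eqnYpmbound} then follow from standard theory of regular initial value problems for measure differential equations, with no smallness and no $z$-dependent cutoff; your rescaling $\Psi=\E^{x/2}\phi_+$ together with the passage to the pair $(\phi_+,\phi_+^\qp)$ is exactly this substitution in disguise (the paper's $Y_\pm$ near $\xi=0$ \emph{is} your rescaled pair). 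What the paper's route buys is that the locally-uniform-in-$z$ estimates \eqref{eqnYpmbound}--\eqref{eqnAsymYpm12}, hence \eqref{eqnBoundThetaPhi} and \eqref{eqnphipmasymest}, come out of a single iteration valid for all $z$ simultaneously; these are indispensable later (Cartwright class in Corollary~\ref{corEvCar}, the bounds in Lemma~\ref{lemWderlam} and Proposition~\ref{propMatZ}). In your scheme the contraction radius forces $x_0=x_0(z)$, so analytic dependence on $z$ and such global bounds require extra, albeit routine, bookkeeping. What your route buys is self-containedness: a fixed-point argument instead of citations to the measure-ODE literature. Your identification of the crux is accurate — the distributional coefficient $\omega$ forces the iteration onto the pair, and convergence is governed by \eqref{eqnDedef} via Cauchy--Schwarz — with one point to make explicit: the pair system \eqref{eqnEquCanSys} also contains the quadratic coefficient $\E^{\pm s}\alpha_\pm(s)^2$, which \eqref{eqnDedef} controls directly (no square root); your stated bound only covers the terms linear in $\alpha_\pm$.

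Two small slips, neither fatal. First, by \eqref{eqnfqpm} the ordinary derivative is $\phi_+'=\phi_+^\qd+z\,u'\phi_+$, so the correction term involves $u'$ rather than $u$, and there is no $\dip$-contribution in this relation (the measure enters only the equation for the derivative of $\phi_+^\qd$); the estimate is unaffected since $u'\in L^2(\R)$. Second, your uniqueness argument needs a variation-of-parameters representation of an \emph{arbitrary} solution with the asymptotics \eqref{eqnphiasym} before it can be fed into the fixed-point equation, and for the distributional coefficients this must again be done at the level of the pair; the paper gets uniqueness at once from the simultaneously constructed second solution $\theta_\pm\sim\mp\E^{\pm\frac{x}{2}}$, whose growth excludes any further solution with the prescribed decay.
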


\begin{proof}
 To begin with, let us introduce the diffeomorphism $\eta_\pm:\R_\mp\rightarrow\R$ by  
 \begin{align*}
  \eta_\pm(\xi) = \mp\ln(\mp\,\xi), \quad \xi\in\R_\mp,
 \end{align*} 
 where $\R_\mp$ denotes the open negative/positive semi-axis. 
 Furthermore, let $\Ar_\pm$ be a measurable function on $\R_\mp$ such that 
 (we set $\alpha_\pm = - u' \pm u$ as in Appendix~\ref{appRelCan})
 \begin{align}\label{eqnDefa}
  \Ar_\pm(\xi) =  \frac{\alpha_\pm(\eta_\pm(\xi))}{\mp\xi} =  \frac{-u'(\eta_\pm(\xi))\pm u(\eta_\pm(\xi))}{\mp\xi}  
 \end{align}
 for almost all $\xi\in\R_\mp$ (note that the right-hand side is well-defined almost everywhere)
 and define the non-negative Borel measure $\beta_\pm$ on $\R_\mp$ via setting
 \begin{align}\label{eqnDefbeta}
  \beta_\pm(B) =  \int_B \frac{1}{\mp\xi}\, d\dip\circ\eta_\pm(\xi)  = \int_{\eta_\pm(B)} \E^{\pm x} d\dip(x)
 \end{align}
 for every Borel set $B\subseteq\R_\mp$. 
 The growth restriction on $(u,\mu)$ in~\eqref{eqnDedef} implies that the function $\Ar_\pm$ is square integrable and that  the measure $\beta_\pm$ is finite (thus $\beta_\pm$ can be extended to a Borel measure on the closure of $\R_\mp$ by setting $\beta_\pm(\lbrace0\rbrace) = 0$).
 Consequently (see, for example, \cite[Section~11.8]{at64}, \cite[Theorem~1.1]{be89}, \cite[Theorem~A.2]{MeasureSL}), there is a unique matrix solution $Y_\pm(z,\redot)$ on $\R_\mp$ of the integral equation 
 \begin{align}\begin{split}\label{eqnCanSysString}
  Y_\pm(z,\xi) = \begin{pmatrix} 1 & 0 \\ 0 & 1 \end{pmatrix} & + z \int_0^\xi \begin{pmatrix} -\Ar_\pm(s) & -1 \\ \Ar_\pm(s)^2 & \Ar_\pm(s) \end{pmatrix} Y_\pm(z,s)ds \\ 
       & + z \int_{0}^{\xi} \begin{pmatrix} 0 & 0 \\ 1 & 0 \end{pmatrix} Y_\pm(z,s) d\beta_\pm(s), \quad \xi\in\R_\mp,
 \end{split}\end{align} 
 for each $z\in\C$. 
 It follows from a version of Gronwall's inequality (see, for example, \cite[Lemma~1.3]{be89}, \cite[Lemma~A.1]{MeasureSL}) 
 that the function $Y_\pm(z,\redot)$ satisfies the estimate
 \begin{align}\label{eqnYpmbound}
  \| Y_\pm(z,\xi) \| \leq \E^{\Lambda_\pm(\xi) |z|}, \quad \xi\in\R_\mp, 
 \end{align}
 where $\|\cdot\|$ denotes the max norm and the non-negative function $\Lambda_\pm$ is given by 
 \begin{align*}
  \mp \Lambda_\pm(\xi) = 2  \int_0^\xi \max\left(1,\,\Ar_\pm(s)^2\right) ds + \int_0^\xi d\beta_\pm, \quad \xi\in\R_\mp.
 \end{align*}
 Plugging this estimate back into the integral equation~\eqref{eqnCanSysString}, we furthermore get
 \begin{align}\label{eqnYpmAsymzero}
  \left\| Y_\pm(z,\xi) - \begin{pmatrix} 1 & 0 \\ 0 & 1 \end{pmatrix} \right\| \leq |z| \Lambda_\pm(\xi) \E^{\Lambda_\pm(\xi) |z|}, \quad \xi\in\R_\mp.
 \end{align}
 For the top-right entry of $Y_\pm(z,\redot)$, we are actually able to sharpen the estimate to  
 \begin{align*}
  \left| Y_{\pm,12}(z,\xi)\right|  \leq |z|\E^{\Lambda_\pm(\xi) |z|} \biggl( \mp \int_0^\xi |\Ar_\pm(s)|ds \mp\xi \biggr), \quad \xi\in\R_\mp.
 \end{align*}
 Upon plugging this back into the integral equation~\eqref{eqnCanSysString} one more time, we obtain 
 \begin{align}\label{eqnAsymYpm12}
  \left| Y_{\pm,12}(z,\xi) + z\,\xi\right| \leq 2 |z|^2 |\xi| \Lambda_\pm(\xi) \E^{\Lambda_\pm(\xi)|z|}, \quad \xi\in\R_\mp. 
 \end{align}

 Next, we introduce the matrix function $U_\pm(z,\redot)$ on $\R$ by 
 \begin{align*}
  U_\pm(z,x) = \begin{pmatrix} \E^{\pm\frac{x}{2}} & 0 \\ 0 & \E^{\mp\frac{x}{2}} \end{pmatrix} Y_\pm(z,\eta_\pm^{-1}(x)), \quad x\in\R.
 \end{align*}
 From the integral equation~\eqref{eqnCanSysString}, a substitution (use, for example, \cite[Theorem~3.6.1 and Corollary~3.7.2]{bo07}) as well as the definitions of $\Ar_\pm$ and $\beta_\pm$, one sees that 
 \begin{align*}
  \left. Y_\pm(z,\eta_\pm^{-1}(\ledot))\right|_{x}^{y} & = z\int_x^y \begin{pmatrix} -\alpha_\pm(s) & - \E^{\mp s} \\ \alpha_\pm(s)^2 \E^{\pm s} & \alpha_\pm(s) \end{pmatrix} Y_\pm(z,\eta_\pm^{-1}(s))ds \\
     & \qquad\qquad\qquad + z \int_x^y \begin{pmatrix} 0 & 0 \\ \E^{\pm s} & 0 \end{pmatrix} Y_\pm(z,\eta_\pm^{-1}(s))d\dip(s)
 \end{align*}
 for all $x$, $y\in\R$. 
 Upon employing the integration by parts formula~\eqref{eqnPI}, one sees that $U_\pm(z,\redot)$ is a solution of the integral equation~\eqref{eqnEquCanSysInt} and thus the system~\eqref{eqnEquCanSys}.

 Now under the additional assumption that $z$ is non-zero, let $\theta_\pm(z,\redot)$ and $\phi_\pm(z,\redot)$ be the solutions of the differential equation~\eqref{eqnDE} such that   
 \begin{align*}
  U_\pm(z,x) = \pm \begin{pmatrix} - \theta_\pm(z,x) & z\phi_\pm(z,x) \\ 
       \frac{1}{z} \theta_\pm^\qpm(z,x) & -\phi_\pm^\qpm(z,x) \end{pmatrix}, \quad x\in\R,
 \end{align*}
 guaranteed to exist by Lemma~\ref{lemEquCanSys}. 
 For future purposes, let us note the bounds 
 \begin{align}\begin{split}\label{eqnBoundThetaPhi}
   & \bigl| \theta_\pm(z,x) \E^{\mp\frac{x}{2}} \bigr|,\, \bigl| \theta_\pm^\qd(z,x) \E^{\mp\frac{x}{2}} \bigr|,\, \bigl| \phi_\pm(z,x)\E^{\pm\frac{x}{2}} \bigr|,\, \bigl| \phi_\pm^\qd(z,x) \E^{\pm\frac{x}{2}} \bigr| \\
   & \qquad\qquad\qquad\qquad\qquad\qquad\qquad\qquad\qquad\qquad \leq2  \E^{4\Lambda_\pm\left(\mp\E^{\mp x}\right) |z|}, \quad x\in\R, 
 \end{split}\end{align}
 that follow from the estimates~\eqref{eqnYpmbound} and~\eqref{eqnAsymYpm12} for $Y_\pm(z,\redot)$, also using that 
 \begin{align*}
  |u(x)| & = \left| \int_x^{\pm\infty} \E^{-|x-s|} \alpha_\pm(s)ds \right| 
                    \leq \frac{1}{2}  \Lambda_\pm\left(\mp\E^{\mp x}\right), \quad x\in\R. 
 \end{align*}
 Furthermore, the inequalities in~\eqref{eqnYpmAsymzero} and~\eqref{eqnAsymYpm12} turn into  
 \begin{align}\begin{split}\label{eqnphipmasymest}
  & \bigl| \theta_\pm(z,x) \E^{\mp\frac{x}{2}} \pm 1\bigr|,\, \bigl| \tfrac{1}{z} \theta_\pm^\qpm(z,x) \E^{\pm\frac{x}{2}} \bigr|,\, \bigl| \phi_\pm(z,x) \E^{\pm\frac{x}{2}} - 1\bigr|,\,  \bigl| \phi_\pm^\qpm(z,x) \E^{\pm\frac{x}{2}} \pm 1\bigr| \\
    & \qquad\qquad\qquad\qquad\qquad\qquad\qquad \leq 2 |z| \Lambda_\pm\left(\mp\E^{\mp x}\right)\E^{\Lambda_\pm\left(\mp\E^{\mp x}\right) |z|}, \quad x\in\R.  \end{split} 
 \end{align}
 In particular, we see that $\phi_\pm(z,\redot)$ has the required asymptotics~\eqref{eqnphiasym} as well as
 \begin{align*}
    \theta_\pm(z,x) & \sim \mp \E^{\pm\frac{x}{2}}, \qquad x \rightarrow \pm\infty, 
 \end{align*}
 which implies that $\phi_\pm(z,\redot)$ is indeed uniquely determined by the asymptotics in~\eqref{eqnphiasym}. 
 Finally, the fact that the derivative of $\phi_\pm(z,\redot)$ is integrable and square integrable near $\pm\infty$ follows from the last bound in~\eqref{eqnBoundThetaPhi}. 
 It remains to set
 \begin{align*}
  \phi_\pm(0,x) & = \E^{\mp\frac{x}{2}}, & \theta_\pm(0,x) & = \mp\E^{\pm\frac{x}{2}},  
 \end{align*}
 for all $x\in\R$ and note that the claim is obvious in the case when $z$ is zero. 
\end{proof}

 For every $z\in\C$ we introduce the complex number $W(z)$ as the Wronskian of the two solutions $\phi_+(z,\redot)$ and $\phi_-(z,\redot)$, that is, in such a way that 
 \begin{align}
  W(z) = \phi_+(z,x)\phi_-'(z,x) - \phi_+'(z,x)\phi_-(z,x)
 \end{align}
 for almost all $x\in\R$; see Corollary~\ref{corWronskian}.  
 It follows readily that the set of zeros of $W$ coincides with the spectrum $\sigma$.  
 Thus, the next result implies that $\sigma$ is a discrete set of nonzero reals with convergence exponent at most one. 
 
 \begin{corollary}\label{corEvCar}
   The functions $\phi_\pm(\ledot,x)$ and $W$ are real entire of Cartwright class with only nonzero and real roots for each fixed $x\in\R$.
  Moreover, the function 
  \begin{align}\label{eqnGreens}
   \frac{z\phi_-(z,x) \phi_+(z,x)}{W(z)}, \quad z\in\C\backslash\R,
  \end{align}
  is a meromorphic Herglotz--Nevanlinna function. 
 \end{corollary}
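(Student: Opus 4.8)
\emph{The plan.} I would separate the structural assertions about $\phi_\pm(\ledot,x)$ and $W$ (entire, real, Cartwright class, nonzero real roots) from the Herglotz--Nevanlinna assertion for~\eqref{eqnGreens}, and prove the latter first, since it drives the former. For the structural part, note that the matrix solution $Y_\pm(z,\redot)$ of the Volterra equation~\eqref{eqnCanSysString} is built by successive approximation with $z$ entering polynomially, so each entry---and hence, through the definition of $U_\pm$ and Lemma~\ref{lemEquCanSys}, each of $\phi_\pm(\ledot,x)$ and $\phi_\pm^{\qd}(\ledot,x)$---is entire in $z$, while the bound~\eqref{eqnBoundThetaPhi} shows these are of finite exponential type. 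By Corollary~\ref{corWronskian} the Wronskian $W$ is independent of $x$ and is a fixed combination of such entries, hence also entire of exponential type. Realness on $\R$ follows because the coefficients $\omega$, $\dip$ and the normalizations~\eqref{eqnphiasym} are real, so that $\overline{\phi_\pm(z,x)}=\phi_\pm(\bar z,x)$ and likewise for $W$. Finally $0$ is a root of none of these functions: the explicit values $\phi_\pm(0,x)=\E^{\mp x/2}$ give $\phi_\pm(0,x)\neq0$ and $W(0)=1$.

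Next I would establish the Herglotz--Nevanlinna property. Writing $G(z,x)$ for the function in~\eqref{eqnGreens} and using $W=\phi_+\phi_-'-\phi_+'\phi_-$, one has the identity $G(z,x)=z\big/\big(m_-(z,x)-m_+(z,x)\big)$, where $m_\pm(z,x)$ is the (quasi-)logarithmic derivative of $\phi_\pm(z,\redot)$ at $x$, i.e.\ the Weyl function of the half-line problem. I would pin down the sign of $\im G$ by a Lagrange (Green) identity for two solutions at the parameters $z$ and $\bar z$: integrating it over $[x,\pm\infty)$ and using that $\phi_\pm$ together with its derivative decays at $\pm\infty$ (Theorem~\ref{thmThetaPhi}) annihilates the boundary term there, leaving a quadratic energy form in $\phi_\pm$ paired against $z\,\omega+z^2\dip$. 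The decisive point is that the positivity built into $\D$ through~\eqref{eqnmuac} (equivalently $\dip\geq0$) renders this form nonnegative, so that $\im z>0$ forces $\im G(z,x)\geq0$; since $G$ is meromorphic and real on $\R$ off the zeros of $W$, this is exactly the Herglotz--Nevanlinna property. Conceptually, $G$ is the diagonal of the resolvent of the self-adjoint realization attached to~\eqref{eqnDE} by the canonical system of Appendix~\ref{appRelCan}.

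The remaining claims follow from this. The poles of a Herglotz--Nevanlinna function are real and simple; applied to $G$ this shows that every zero of $W$ is real (and nonzero, as $W(0)=1$), and applied to the half-line functions $m_\pm(\ledot,x)$ it shows that the zeros of $\phi_\pm(\ledot,x)$ are real and nonzero. For membership in the Cartwright class I would work through the canonical-system reformulation: the functions $\phi_\pm(\ledot,x)$ are entries of the transfer matrix of a canonical system whose Hamiltonian has finite total mass by the growth restriction~\eqref{eqnDedef}, and are therefore of bounded type in each half-plane while having finite exponential type; a real entire function of exponential type that is of bounded type in both $\C_+$ and $\C_-$ belongs to the Cartwright class, and $W$ inherits this as a combination of such functions. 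This in turn yields convergence exponent at most one and legitimizes the product~\eqref{eqncproddef}.

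The principal obstacle is the Herglotz step: because $\omega$ is only a distribution in $H^{-1}(\R)$ and the dependence on the spectral parameter is quadratic through the $z^2\dip$ term, there is no naive self-adjoint eigenvalue problem to invoke, and the energy form arising in the Lagrange identity must be reorganized---via the canonical-system variables of Appendix~\ref{appRelCan}---into a manifestly nonnegative quantity, which is precisely where the defining inequality~\eqref{eqnmuac} of $\D$ is indispensable. A secondary technical hurdle is upgrading ``exponential type with real zeros'' to genuine Cartwright membership, for which the bounded-type input from the canonical-system structure is needed rather than the crude growth bound~\eqref{eqnBoundThetaPhi} alone.
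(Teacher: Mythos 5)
Your proposal is correct and is essentially the paper's own proof: the decomposition
$-W(z)/\bigl(z\phi_+(z,x)\phi_-(z,x)\bigr)=\phi_+^\qd(z,x)/\bigl(z\phi_+(z,x)\bigr)-\phi_-^\qd(z,x)/\bigl(z\phi_-(z,x)\bigr)$
into a sum of two half-line Herglotz--Nevanlinna functions (obtained from the form identity~\eqref{eqnSIP} together with the decay of $\phi_\pm(z,\redot)$ at $\pm\infty$), the extraction of the real, nonzero roots of $W$ and $\phi_\pm(\ledot,x)$ from the resulting pole structure, and the appeal to the canonical-system (Krein--Langer) theory behind~\eqref{eqnCanSysString} for Cartwright membership are exactly the steps the paper takes. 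One imprecision in your Herglotz step should be repaired before it is executable: it is not true that an energy form ``paired against $z\,\omega+z^2\dip$'' is nonnegative---$\omega$ is indefinite and \eqref{eqnmuac} gives no control over it---rather, after multiplying $\phi_\pm^\qd(z,x)\overline{\phi_\pm(z,x)}$ by $\bar z$, the $\omega$-pairing carries the real factor $|z|^2$ and therefore cancels from the imaginary part, and what survives is $\im(z)$ times a quantity built from the $H^1$-energy of $\phi_\pm$ on the half-line toward $\pm\infty$ together with $|z|^2\int|\phi_\pm|^2\,d\dip$, so that the only positivity actually invoked is $\dip\geq0$, i.e.\ your parenthetical reading of~\eqref{eqnmuac}.
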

 
 \begin{proof}
  Since the entries of $Y_\pm(\ledot,\xi)$ are real entire of Cartwright class with only real roots (cf.\ \cite[Section~1]{krla14}), the same holds for $\phi_\pm(\ledot,x)$ upon noting that 
  \begin{align*}
    \lim_{z\rightarrow 0} \phi_\pm(z,x) = \pm \E^{\pm\frac{x}{2}} \lim_{z\rightarrow0} \frac{Y_{\pm,12}\left(z,\mp\E^{\mp x}\right)}{z} = \E^{\mp\frac{x}{2}} = \phi_\pm(0,x).
  \end{align*}
  Because the function $\phi_\pm^\qd(\ledot,x)$ is real entire of Cartwright class as well, so is $W$ as  
  \begin{align*}
   W(z) & = \phi_+(z,x) \phi_-^\qd(z,x) - \phi_+^\qd(z,x) \phi_-(z,x), \quad z\in\C. 
  \end{align*}
  Next, we observe that the meromorphic function 
  \begin{align*}
   \pm\frac{\phi_\pm^\qd(z,x)}{z\phi_\pm(z,x)}, \quad z\in\C\backslash\R, 
  \end{align*}  
  is a Herglotz--Nevanlinna function. 
  In fact, to this end one just needs to evaluate its imaginary part and use~\eqref{eqnSIP} 
 as well as the vanishing asymptotics of the solution $\phi_\pm(z,\redot)$ near $\pm\infty$. 
  Since we may write 
  \begin{align*}
    - \frac{W(z)}{z\phi_-(z,x) \phi_+(z,x)} = \frac{\phi_+^\qd(z,x)}{z\phi_+(z,x)} - \frac{\phi_-^\qd(z,x)}{z\phi_-(z,x)}, \quad z\in\C\backslash\R,
  \end{align*}  
  this shows that the function in~\eqref{eqnGreens} is a Herglotz--Nevanlinna function as well. 
  In particular, this guarantees that $W$ has only nonzero and real roots indeed.  
  \end{proof}
 
 \begin{remark}\label{remETW}
 Although we will not prove this here, let us mention that it is possible to show that the exponential type of the entire function $W$ is simply given by 
  \begin{align}\label{eqnETW}
    \int_\R \rho(x) dx,
  \end{align}
  where $\rho$ is the square root of the Radon--Nikod\'ym derivative of the absolutely continuous part of the Borel measure $\dip$ (with respect to the Lebesgue measure). 
 \end{remark}

 
 As the spectrum alone will not be enough, we need to introduce further spectral quantities.   
 Since the solutions $\phi_+(\lambda,\redot)$ and $\phi_-(\lambda,\redot)$ are linearly dependent for each fixed $\lambda\in\sigma$ in view of Corollary~\ref{corWronskian}, there is a unique nonzero $c_{\lambda}\in\R$ such that 
 \begin{align}\label{eqnCoup}
  \phi_-(\lambda,x) = c_{\lambda} \phi_+(\lambda,x), \quad x\in\R, 
 \end{align}
 which will be referred to as the coupling constant associated with $\lambda$. 
 It will often be more convenient to work with the logarithmic coupling constant $\kappa_\lambda$, defined by 
 \begin{align}
  \kappa_\lambda = \ln|c_\lambda|,  
 \end{align}
 instead. 
 We will see below that one can always recover (the sign of) the coupling constant $c_\lambda$ from the quantity $\kappa_\lambda$, provided the spectrum $\sigma$ is known as well. 
 Furthermore, we introduce the right/left (modified) norming constant $\gamma_{\lambda,\pm}^2\in\R$ via 
 \begin{align}\label{eqnNormConstDef}
  \gamma_{\lambda,\pm}^2 = \omega\big(\phi_\pm(\lambda,\redot)^2\big) + 2\lambda \int_\R \phi_\pm(\lambda,x)^2 d\dip(x).
 \end{align}
  Upon employing~\eqref{eqnSIP} and the asymptotics from Theorem~\ref{thmThetaPhi}, one sees that 
 \begin{align}\label{eqnNormConstH1}
  \lambda\gamma_{\lambda,\pm}^2 = \int_\R \phi_\pm'(\lambda,x)^2 dx +  \frac{1}{4} \int_\R \phi_\pm(\lambda,x)^2 dx + \lambda^2 \int_\R \phi_\pm(\lambda,x)^2 d\dip(x) > 0.
 \end{align} 
 The following result gives a relation between all our spectral quantities. 

 \begin{lemma}\label{lemWderlam}
  For each $\lambda\in\sigma$ we have the relation 
 \begin{align}\label{eqnWlam}
 - \dot{W}(\lambda) = c_{\lambda}^{\pm1} \gamma_{\lambda,\pm}^{2}.
\end{align}
\end{lemma}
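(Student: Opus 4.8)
The plan is to compute $\dot W(\lambda)$ via a Lagrange-type identity connecting solutions at two nearby values of the spectral parameter, and then to recognize the resulting one-sided Wronskians as the norming constants. Throughout write $\dot{} = \partial_z$; since $\phi_\pm(\cdot,x)$ is entire by Corollary~\ref{corEvCar}, the $z$-derivative $\dot\phi_\pm(\lambda,\cdot)$ is a well-defined solution of~\eqref{eqnDE} differentiated once in $z$. For brevity denote by $W(f,g) = fg^\qd - f^\qd g$ the (in general $x$-dependent) Wronskian of two functions, so that $W(z) = W(\phi_+(z,\cdot),\phi_-(z,\cdot))$ is the quantity whose zeros form $\sigma$; it is constant in $x$ precisely because both its arguments solve~\eqref{eqnDE} at the same parameter.

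First I would record two one-sided identities. Applying the Lagrange identity~\eqref{eqnSIP} to $\phi_+(z,\cdot)$ and $\phi_+(\lambda,\cdot)$ over a half-line $[x,\infty)$, the boundary contribution at $+\infty$ drops out because both solutions (and their quasi-derivatives) decay like $\E^{-\cdot/2}$ there by Theorem~\ref{thmThetaPhi}; dividing by $z-\lambda$ and letting $z\to\lambda$ then shows that $W(\phi_+(\lambda,\cdot),\dot\phi_+(\lambda,\cdot))(x)$ equals the portion of $\gamma_{\lambda,+}^2$ (cf.~\eqref{eqnNormConstDef}) accumulated on $[x,\infty)$, the $\omega$-part being read through the canonical-system formulation of Appendix~\ref{appRelCan}. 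Consequently this expression tends to $\gamma_{\lambda,+}^2$ as $x\to-\infty$ and to $0$ as $x\to+\infty$. The mirror computation with $\phi_-(z,\cdot)$, $\phi_-(\lambda,\cdot)$ over $(-\infty,x]$ (where the boundary term at $-\infty$ vanishes) yields that $W(\phi_-(\lambda,\cdot),\dot\phi_-(\lambda,\cdot))(x)$ equals minus the portion of $\gamma_{\lambda,-}^2$ accumulated on $(-\infty,x)$, hence vanishes as $x\to-\infty$ and tends to $-\gamma_{\lambda,-}^2$ as $x\to+\infty$.

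Next I differentiate $W(z) = W(\phi_+(z,\cdot),\phi_-(z,\cdot))$ in $z$, evaluate at $\lambda$, and insert the linear dependence $\phi_-(\lambda,\cdot) = c_\lambda\phi_+(\lambda,\cdot)$ from~\eqref{eqnCoup} together with the same relation for the quasi-derivatives. The four terms collapse into
\[
 \dot W(\lambda) = -c_\lambda\, W(\phi_+(\lambda,\cdot),\dot\phi_+(\lambda,\cdot))(x) + c_\lambda^{-1}\, W(\phi_-(\lambda,\cdot),\dot\phi_-(\lambda,\cdot))(x),
\]
which holds for every $x\in\R$ since $\dot W(\lambda)$ is a constant. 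Letting $x\to-\infty$ annihilates the second summand and turns the first into $-c_\lambda\gamma_{\lambda,+}^2$, giving $-\dot W(\lambda) = c_\lambda\gamma_{\lambda,+}^2$; letting $x\to+\infty$ annihilates the first summand and turns the second into $-c_\lambda^{-1}\gamma_{\lambda,-}^2$, giving $-\dot W(\lambda) = c_\lambda^{-1}\gamma_{\lambda,-}^2$. Together these are exactly the two cases of~\eqref{eqnWlam} (and as a by-product one reads off $\gamma_{\lambda,-}^2 = c_\lambda^2\gamma_{\lambda,+}^2$, consistent with~\eqref{eqnCoup}).

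The algebra is routine; the genuine work, and the main obstacle, lies in the analytic justification in this low-regularity setting. The Lagrange identity must be interpreted through the quasi-derivative/canonical-system machinery of Appendix~\ref{appRelCan}, so that the distributional action of $\omega$ is encoded in the jumps of the quasi-derivatives and the one-sided accumulation of $\gamma_{\lambda,\pm}^2$ is meaningful. One then needs dominated convergence both to differentiate $\phi_\pm$ in $z$ and to pass $z\to\lambda$ inside the integrals, which is supplied by the locally uniform bounds~\eqref{eqnBoundThetaPhi} and~\eqref{eqnphipmasymest}. Finally, the vanishing of the boundary terms at $\pm\infty$ and the convergence of the one-sided quantities to the full norming constants must be extracted from the decay asymptotics~\eqref{eqnphiasym} of $\phi_\pm(\lambda,\cdot)$ together with the growth restriction~\eqref{eqnDedef}, which guarantees finiteness of $\gamma_{\lambda,\pm}^2$ through~\eqref{eqnNormConstH1}.
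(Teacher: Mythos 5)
Your proof is correct, but it takes a genuinely different route from the paper's. You work directly at the eigenvalue: the Lagrange identity applied to $\phi_\pm(z,\cdot)$ and $\phi_\pm(\lambda,\cdot)$ over a half-line, followed by the divided-difference limit $z\to\lambda$, identifies the \emph{same-sign} mixed Wronskians $W(\phi_\pm(\lambda,\cdot),\dot\phi_\pm(\lambda,\cdot))(x)$ with one-sided accumulations of the norming constants, and the two half-line limits $x\to\mp\infty$ then give the two cases of~\eqref{eqnWlam}. The paper instead introduces the \emph{opposite-sign} Wronskians $W_\pm(z,x)=\dot\phi_\pm(z,x)\phi_\mp^\qd(z,x)-\dot\phi_\pm^\qd(z,x)\phi_\mp(z,x)$, differentiates the integral equations for $\phi_\pm$ and $\phi_\pm^\qd$ with respect to $z$, and derives the identity~\eqref{eqnWdot}, namely
\begin{align*}
-\dot W(z)=\int_\R u(s)\phi_+(z,s)\phi_-(z,s)+u'(s)\bigl(\phi_+(z,\redot)\phi_-(z,\redot)\bigr)'(s)\,ds+2z\int_\R \phi_+(z,s)\phi_-(z,s)\,d\dip(s),
\end{align*}
valid for \emph{every} $z\in\C$; the lemma then follows by setting $z=\lambda$ and inserting $\phi_-(\lambda,\cdot)=c_\lambda\phi_+(\lambda,\cdot)$. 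The trade-off is worth noting: your argument is the more classical spectral-theoretic one (it is essentially the residue computation for a Weyl function, and it cleanly isolates where each analytic ingredient — decay at $\pm\infty$, locally uniform bounds, quasi-derivative formulation of $\omega$ — is used), but it only produces the relation at points of $\sigma$. The paper's global formula~\eqref{eqnWdot} is reused afterwards: the first trace formula in Proposition~\ref{propTrF} is read off from it at $z=0$, and the second comes from differentiating it under the integral sign, so with your proof those would need a separate derivation. As a minor point, your parenthetical by-product $\gamma_{\lambda,-}^2=c_\lambda^2\gamma_{\lambda,+}^2$ is indeed immediate from~\eqref{eqnNormConstDef} and~\eqref{eqnCoup}, which is a useful internal consistency check on the signs.
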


\begin{proof}
Let us fix an arbitrary $z\in\C$ and introduce the function
\begin{align*}
 W_\pm(z,x) = \dot{\phi}_\pm(z,x) \phi_\mp^\qd(z,x) - \dot{\phi}^\qd_\pm(z,x)\phi_\mp(z,x), \quad x\in\R, 
\end{align*} 
where the dot denotes differentiation with respect to the spectral parameter. 
From the bounds in~\eqref{eqnphipmasymest} and Cauchy's integral formula, we obtain the estimates 
\begin{align*}
  \bigl| \dot{\phi}_\pm(z,x)\E^{\pm\frac{x}{2}}\bigr|,\, \bigl| \dot{\phi}_\pm^\qpm(z,x) \E^{\pm\frac{x}{2}} \bigr| & \leq 2 (|z|+1) \Lambda_\pm\left(\mp\E^{\mp x}\right) \E^{\Lambda_\pm\left(\mp\E^{\mp x}\right) (|z|+1)}, \quad x\in\R.
\end{align*}
 Upon writing $\phi_\mp(z,\redot)$ as a linear combination of $\phi_\pm(z,\redot)$ and $\theta_\pm(z,\redot)$, we see from the bounds in~\eqref{eqnBoundThetaPhi} that  the function $W_\pm(z,x)$ tends to zero as $x\rightarrow\pm\infty$. 
 Next, we note that from~\eqref{eqnfqpm} and~\eqref{eqnDEweakderiv}  one gets 
\begin{align*}
  \phi_\pm(z,\redot) \bigr|_x^y & = \int_x^y z u'(s) \phi_\pm(z,s) + \phi_\pm^\qd(z,s) \, ds, \\ 
 \phi_\pm^\qd(z,\redot) \bigr|_x^y & = \int_x^y \left( \frac{1}{4}  - z u(s) - z^2 u'(s)^2\right) \phi_\pm(z,s) -z u'(s)\phi_\pm^\qd(z,s) \, ds \\
                                                              & \qquad  - z^2 \int_x^y \phi_\pm(z,s) d\dip(s), 
\end{align*}
for all $x$, $y\in\R$, and after differentiating with respect to $z$ also 
\begin{align*}
  \dot{\phi}_\pm(z,\redot) \bigr|_x^y & = \int_x^y z u'(s) \dot{\phi}_\pm(z,s) + \dot{\phi}_\pm^\qd(z,s) +  u'(s) \phi_\pm(z,s) \, ds, \\ 
  \dot{\phi}_\pm^\qd(z,\redot) \bigr|_x^y & =  \int_x^y \left( \frac{1}{4}  - z u(s) - z^2 u'(s)^2\right) \dot{\phi}_\pm(z,s) - z  u'(s) \dot{\phi}_\pm^\qd(z,s) \, ds  \\
                                                               & \qquad - \int_x^y \left(u(s) + 2 zu'(s)^2\right)\phi_\pm(z,s) + u'(s)\phi_\pm^\qd(z,s) \, ds \\
                                                              & \qquad - 2 z \int_x^y \phi_\pm(z,s) d\dip(s) - z^2 \int_x^y \dot{\phi}_\pm(z,s) d\dip(s).
\end{align*}
In conjunction with the integration by parts formula~\eqref{eqnPI}, this gives 
\begin{align*}
 \left. W_\pm(z,\redot) \right|_{x}^{y} & = \int_x^y u(s) \phi_+(z,s) \phi_-(z,s) +  u'(s)\left(\phi_+(z,\redot)\phi_-(z,\redot)\right)'(s)\,ds \\
    & \qquad + 2 z \int_x^y \phi_+(z,s) \phi_-(z,s) d\dip(s).
\end{align*}
Upon letting $y\rightarrow\pm\infty$ in this equation, we obtain   
\begin{align}\begin{split}\label{eqnWdot}
 - \dot{W}(z) & = W_-(z,x) - W_+(z,x) \\ & = \int_\R u(s) \phi_+(z,s) \phi_-(z,s) + u'(s)\left(\phi_+(z,\redot)\phi_-(z,\redot)\right)'(s) \, ds \\
                & \qquad + 2z \int_\R \phi_+(z,s)\phi_-(z,s)d\dip(s),
\end{split}\end{align}
which readily yields the claimed identity (one should also note that the functions $u$ and $u'$ are integrable due to the growth restriction in~\eqref{eqnDedef} 
and thus the bounds in~\eqref{eqnBoundThetaPhi} guarantee that all integrals exist indeed).
\end{proof}

 In particular, the previous result shows that all zeros of the entire function $W$ are simple. 
 As a Cartwright class function, the Wronskian $W$ thus admits a product representation (see, for example, \cite[Section~17.2]{le96}) of the form
 \begin{align}\label{eqnWprodrep}
  W(z) = \cprod_{\lambda\in\sigma} \biggl(1-\frac{z}{\lambda}\biggr), \quad z\in\C. 
 \end{align}
 Upon invoking the identities in~\eqref{eqncprodatzero}, this fact allows us to read off trace formulas for the spectrum $\sigma$ from the derivatives of the function $W$ at zero. 

\begin{proposition}\label{propTrF}
 The first two trace formulas are: 
 \begin{align}
  \csum_{\lambda\in\sigma} \frac{1}{\lambda} & = \int_\R u(x)dx, & \frac{1}{2} \sum_{\lambda\in\sigma} \frac{1}{\lambda^2} & = \int_\R d\mu. 
 \end{align}
\end{proposition}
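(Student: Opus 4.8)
The plan is to read off both identities from the behavior of the Wronskian $W$ at the origin. By the product representation~\eqref{eqnWprodrep} together with the identities~\eqref{eqncprodatzero}, it suffices to compute the first two derivatives of $W$ at zero, since $\csum_{\lambda\in\sigma}\lambda^{-1} = -\dot{W}(0)$ and $\sum_{\lambda\in\sigma}\lambda^{-2} = \dot{W}(0)^2 - \ddot{W}(0)$. The common starting point is the representation of $-\dot{W}(z)$ obtained in~\eqref{eqnWdot} during the proof of Lemma~\ref{lemWderlam}; throughout I abbreviate $\Phi(z,x) = \phi_+(z,x)\phi_-(z,x)$ and recall from the end of the proof of Theorem~\ref{thmThetaPhi} that $\phi_\pm(0,x) = \E^{\mp x/2}$, so that $\Phi(0,\redot)$ is identically one. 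The first trace formula is then immediate: evaluating~\eqref{eqnWdot} at $z=0$, the derivative of the constant function $\Phi(0,\redot)$ vanishes and the $\dip$-integral carries a prefactor $z$, so that $-\dot{W}(0) = \int_\R u(x)\,dx$, which is the first claim.

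For the second formula I would differentiate~\eqref{eqnWdot} once more in $z$ and set $z=0$. Using $\Phi(0,\redot)\equiv 1$ and the very definition~\eqref{eqnDefomega} of $\omega$, the outcome can be written as $-\ddot{W}(0) = \omega\bigl(\dot\Phi(0,\redot)\bigr) + 2\dip(\R)$, where $\dot\Phi(0,\redot)$ denotes the function $x\mapsto\partial_z\Phi(z,x)\big|_{z=0}$. Everything thus reduces to identifying the first-order coefficient $\dot\phi_\pm(0,\redot)$ in the expansion of $\phi_\pm$ about $z=0$. From~\eqref{eqnDE} this coefficient solves the inhomogeneous problem $-\psi'' + \tfrac14\psi = \omega\,\E^{\mp\redot/2}$, subject to decaying faster than $\E^{\mp x/2}$ at $\pm\infty$, and solving it by variation of parameters (integrating the contributions of $u$ and $u''$ in $\omega$ by parts) yields the closed forms
\[
 \dot\phi_+(0,x) = \E^{-x/2}\Bigl(u(x) - \int_x^{\infty} u(s)\,ds\Bigr), \qquad \dot\phi_-(0,x) = \E^{x/2}\Bigl(u(x) - \int_{-\infty}^{x} u(s)\,ds\Bigr).
\]
Multiplying out $\dot\Phi(0,\redot) = \dot\phi_+(0,\redot)\,\E^{\redot/2} + \E^{-\redot/2}\,\dot\phi_-(0,\redot)$ collapses the two tail integrals into a single $\int_\R u$, giving the strikingly simple $\dot\Phi(0,x) = 2u(x) - \int_\R u(s)\,ds$. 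Feeding this into $\omega(\dot\Phi(0,\redot))$ produces $2\int_\R (u^2 + u'^2)\,dx - \bigl(\int_\R u\,dx\bigr)^2$, whence $-\ddot{W}(0) = 2\int_\R(u^2+u'^2)\,dx - \bigl(\int_\R u\,dx\bigr)^2 + 2\dip(\R)$. Now the square $\dot{W}(0)^2 = \bigl(\int_\R u\,dx\bigr)^2$ cancels the unwanted term in $\sum_{\lambda}\lambda^{-2} = \dot{W}(0)^2 - \ddot{W}(0)$, and invoking~\eqref{eqnDefdip} leaves $\sum_\lambda\lambda^{-2} = 2\int_\R d\mu$, which is the second claim.

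The genuinely delicate points are the explicit solution for $\dot\phi_\pm(0,\redot)$ and the justification of differentiating~\eqref{eqnWdot} under the integral sign. The latter should be controlled by the exponential bounds~\eqref{eqnBoundThetaPhi}, their $z$-derivative analogues already established in the proof of Lemma~\ref{lemWderlam}, and the integrability of $u$ and $u'$ guaranteed by the growth restriction~\eqref{eqnDedef}. At the low regularity of $\omega$, the perturbation equation for $\dot\phi_\pm(0,\redot)$ must be read in the weak sense made precise in the Appendix, which is exactly what legitimizes the integrations by parts used to solve it; verifying that the resulting closed forms indeed match the asymptotic normalization~\eqref{eqnphiasym} differentiated in $z$ is the main bookkeeping obstacle.
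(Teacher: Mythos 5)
Your overall route coincides with the paper's own: both identities are read off from \eqref{eqncprodatzero} and the representation \eqref{eqnWdot} of $-\dot{W}$, evaluated (respectively differentiated) at $z=0$, and your closed forms for $\dot\phi_\pm(0,\redot)$ agree with the ones the paper derives, since $\int_x^{\pm\infty}\alpha_\mp(s)\,ds = u(x)\mp\int_x^{\pm\infty}u(s)\,ds$; the final bookkeeping, namely $-\ddot{W}(0)=2\int_\R u^2+u'^2\,dx+2\dip(\R)-\bigl(\int_\R u\,dx\bigr)^2$ followed by cancellation against $\dot{W}(0)^2$, is exactly the paper's. The one genuinely different ingredient is how you obtain $\dot\phi_\pm(0,\redot)$: you characterize it as the unique solution of the inhomogeneous weak problem $-\psi''+\tfrac14\psi=\omega\,\E^{\mp\redot/2}$ with $\psi=\oo(\E^{\mp x/2})$ at $\pm\infty$, whereas the paper computes $\dot{Y}_\pm(0,\redot)$ and $\ddot{Y}_{\pm,12}(0,\redot)$ from the integral equation \eqref{eqnCanSysString} and substitutes back. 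Your variant is legitimate: one can check distributionally that your formulas solve that problem, and the required differentiated asymptotics follow from the Cauchy-estimate bounds established in the proof of Lemma~\ref{lemWderlam}. (A cosmetic caveat: $\dot\Phi(0,\redot)=2u-\int_\R u$ is not in $H^1(\R)$ because of the additive constant, so $\omega\bigl(\dot\Phi(0,\redot)\bigr)$ must be read as the convergent integrals $\int_\R u\,\dot\Phi(0,\redot)+\int_\R u'\,\dot\Phi(0,\redot)'$ rather than literally via \eqref{eqnDefomega}.)

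The genuine gap is in the step you delegate to ``the exponential bounds \eqref{eqnBoundThetaPhi} and their $z$-derivative analogues'': these do \emph{not} suffice to differentiate \eqref{eqnWdot} under the integral sign. The bound \eqref{eqnBoundThetaPhi} controls $\phi_\pm(z,x)\E^{\pm\frac{x}{2}}$ by $2\,\E^{4\Lambda_\pm\left(\mp\E^{\mp x}\right)|z|}$, and $\Lambda_\pm\left(\mp\E^{\mp x}\right)$ stays bounded only as $x\rightarrow\pm\infty$; as $x\rightarrow\mp\infty$ it grows at least like $2\E^{\mp x}$, so the resulting bound on $\phi_+(z,x)\phi_-(z,x)$ blows up doubly exponentially at both ends of $\R$, far too fast to be absorbed by the (at best exponential) decay of $u$ and $u'$. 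Hence no integrable dominating function, locally uniform in $z$, comes out of \eqref{eqnBoundThetaPhi} alone, and the derivative bounds from Lemma~\ref{lemWderlam} suffer from the same defect. What is missing is precisely the first half of the paper's proof: introduce the connection coefficients $V_\pm$, write $\phi_\pm(z,x)=\pm W(z)\theta_\mp(z,x)+V_\mp(z)\phi_\mp(z,x)$ as in \eqref{eqnphipmasmp}, and deduce the bounds \eqref{eqnPhipmatmp}, which control each $\phi_\pm$ at its ``bad'' end $x\rightarrow\mp\infty$ and show that the three products $\phi_+\phi_-$, $\phi_+\phi_-^\qd$, $\phi_+^\qd\phi_-$ are bounded uniformly in $x\in\R$ and locally uniformly in $z$. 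With that supplement, your computation goes through verbatim.
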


\begin{proof}
 In view of~\eqref{eqncprodatzero}, we immediately obtain the first identity from~\eqref{eqnWdot} and we are left to compute the second derivative of $W$ at zero. 
 For this purpose, we first introduce the entire function $V_\pm$ by 
  \begin{align*}
     V_\pm(z) & =  \theta_\pm(z,x) \phi_\mp^\qd(z,x) - \theta_\pm^\qd(z,x) \phi_\mp(z,x), \quad x\in\R,~z\in\C,
 \end{align*}  
  so that we may write  
 \begin{align}\label{eqnphipmasmp}
  \phi_\pm(z,x) = \pm W(z) \theta_{\mp}(z,x) + V_\mp(z) \phi_\mp(z,x), \quad x\in\R,~ z\in\C. 
 \end{align}
 In conjunction with~\eqref{eqnBoundThetaPhi} and~\eqref{eqnphipmasymest}, this allows us to estimate  
 \begin{align}\begin{split}\label{eqnPhipmatmp}
   & \bigl| \phi_\pm(z,x)\E^{\pm\frac{x}{2}} - W(z) \bigr|,\,  \bigl| \phi_\pm^\qpm(z,x) \E^{\pm\frac{x}{2}} \pm W(z)\bigr| \\
   & \qquad  \leq  4 \E^{5\Lambda_\mp\left(\pm\E^{\pm x}\right) |z|}\left(|zW(z)| \Lambda_\mp\left(\pm\E^{\pm x}\right) \left(|z|\E^{\pm x}+1\right) + |V_\mp(z)| \E^{\pm x}\right)
  \end{split} \end{align}
  for all $x\in\R$ and $z\in\C$. 
 From this we see that each of the three products 
 \begin{align*}
   & \phi_+(z,x)\phi_-(z,x), & &\phi_+(z,x) \phi_-^\qd(z,x), & & \phi_+^\qd(z,x) \phi_-(z,x), 
 \end{align*}
 is bounded uniformly in $x\in\R$ and locally uniformly in $z\in\C$. 
 Thus, we may differentiate~\eqref{eqnWdot} under the integral and in order to evaluate the derivative at zero, we first note that 
 \begin{align}\label{eqnYpmderatzero}
  \dot{Y}_{\pm}(0,\xi) & = \int_0^{\xi} \begin{pmatrix} -\Ar_\pm(s) & -1 \\ \Ar_\pm(s)^2 & \Ar_\pm(s) \end{pmatrix} ds + \int_0^\xi \begin{pmatrix} 0 & 0 \\ 1 & 0 \end{pmatrix} d\beta_\pm(s), \quad \xi\in\R_\mp,  
  \end{align}
 which follows from~\eqref{eqnCanSysString}. 
 Moreover, for the top-right entry we even have 
 \begin{align*}
   \ddot{Y}_{\pm,12}(0,\xi) & =  2\xi \int_0^\xi \Ar_\pm(s)ds - 4 \int_0^\xi \int_0^s \Ar_\pm(r)dr\, ds, \quad \xi\in\R_\mp. 
 \end{align*}  
 After performing substitutions, this turns into the identities 
 \begin{align*}
    \dot{\phi}_\pm(0,x)& =  \E^{\mp\frac{x}{2}} \int_x^{\pm\infty} \alpha_\mp(s)ds, &
  \dot{\phi}_\pm^\qd(0,x) &  = \pm\frac{1}{2} \E^{\mp\frac{x}{2}}  \int_x^{\pm\infty} \alpha_\pm(s)ds,  
 \end{align*}
 for all $x\in\R$. 
 Plugging them into the differentiated integral in~\eqref{eqnWdot}, we arrive at
 \begin{align*}
  - \ddot{W}(0) = 2 \int_\R u(x)^2 + u'(x)^2\, dx + 2 \int_\R d\dip - \left(\int_\R u(x)dx\right)^2, 
 \end{align*}
 which proves the remaining claim in view of~\eqref{eqncprodatzero}.  
\end{proof}

Before we are able to solve the inverse spectral problem, we need to derive one more necessary condition. 
More precisely, we will show that the growth restriction on the pair $(u,\mu)$ in~\eqref{eqnDedef} implies certain asymptotic behavior of the spectral data. 

 \begin{proposition}\label{propMatZ}   
 We have the identity 
 \begin{align}\label{eqnLCPars}
   \int_\R \E^{\pm x} \left(u'(x)\mp u(x)\right)^2 dx + \int_\R \E^{\pm x} d\dip(x) = \sum_{\lambda\in\sigma} \frac{1}{\lambda^{2}} \frac{1}{\lambda\gamma_{\lambda,\pm}^{2}}.
  \end{align}
 \end{proposition}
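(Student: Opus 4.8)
The plan is to read the right-hand side of \eqref{eqnLCPars} off as the residues of a single meromorphic Weyl-type function and to match their sum against a Taylor coefficient at the origin that reproduces the left-hand side. First I would recast the left-hand side in the canonical-system variables from the proof of Theorem~\ref{thmThetaPhi}. Under the substitution $x=\eta_\pm(\xi)$ one has $d\xi=\E^{\mp x}\,dx$, so the definitions \eqref{eqnDefa} and \eqref{eqnDefbeta} of $\Ar_\pm$ and $\beta_\pm$ together with $\alpha_\pm=-u'\pm u$ give
\[
 \int_{\R}\E^{\pm x}\bigl(u'(x)\mp u(x)\bigr)^{2}dx+\int_{\R}\E^{\pm x}\,d\dip(x)=\int_{\R_\mp}\Ar_\pm(\xi)^{2}\,d\xi+\beta_\pm(\R_\mp).
\]
Next I would bring in the entire function $V_\pm$ and the identity \eqref{eqnphipmasmp} from the proof of Proposition~\ref{propTrF}; like $W$, the function $V_\pm$ is real entire of Cartwright class by the reasoning of Corollary~\ref{corEvCar}, and evaluating \eqref{eqnphipmasmp} at $z=0$ (using $\phi_\pm(0,x)=\E^{\mp x/2}$, $\theta_\pm(0,x)=\mp\E^{\pm x/2}$ and $W(0)=1$) shows $V_\pm(0)=0$. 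I then consider the meromorphic function $g_\pm(z)=V_\pm(z)/(z^{3}W(z))$, whose poles all lie in $\sigma\cup\{0\}$; since $V_\pm(0)=0$ and $W(0)=1$, the residue of $g_\pm$ at the origin is exactly the coefficient of $z^{2}$ in the Taylor expansion of $V_\pm/W$ about $z=0$.

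At a point $\lambda\in\sigma$ the residue is immediate: as $W(\lambda)=0$, relation \eqref{eqnphipmasmp} gives $\phi_\mp(\lambda,\redot)=V_\pm(\lambda)\phi_\pm(\lambda,\redot)$, so comparison with \eqref{eqnCoup} yields $V_\pm(\lambda)=c_\lambda^{\pm1}$, and with Lemma~\ref{lemWderlam} this leads to
\[
 \res{\lambda} g_\pm=\frac{V_\pm(\lambda)}{\lambda^{3}\dot{W}(\lambda)}=-\frac{1}{\lambda^{3}\gamma_{\lambda,\pm}^{2}}.
\]
The core of the proof is then a summation-of-residues identity: on a suitable sequence of circles $|z|=r_{n}\to\infty$ chosen to avoid the (real) zeros of $W$, I would show $\oint_{|z|=r_{n}}g_\pm(z)\,dz\to0$, so that the residues of $g_\pm$ add up to zero and hence $\sum_{\lambda\in\sigma}\lambda^{-3}\gamma_{\lambda,\pm}^{-2}=\res{0}g_\pm$.

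This contour estimate is the step I expect to be the main obstacle, since it requires controlling $V_\pm(z)/W(z)$ for large $|z|$: both functions are of the same finite exponential type, and the factor $z^{-3}$ must dominate their growth along the circles. I would derive the upper bounds on $|V_\pm(z)|$ from the a priori estimates \eqref{eqnBoundThetaPhi} and \eqref{eqnPhipmatmp}, and a matching lower bound on $|W(z)|$ on the circles $|z|=r_{n}$ from the Cartwright-class structure of Corollary~\ref{corEvCar} (real zeros of convergence exponent at most one). Conceptually this is the assertion that the Weyl-type function $V_\pm/W$ of the associated canonical system admits a convergent partial-fraction expansion whose $z^{2}$-coefficient is precisely $\sum_{\lambda}\lambda^{-3}\gamma_{\lambda,\pm}^{-2}$.

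It finally remains to identify $\res{0}g_\pm$ with the left-hand side. Writing $W(z)=1+\dot{W}(0)z+\tfrac12\ddot{W}(0)z^{2}+\cdots$ and $V_\pm(z)=\dot{V}_\pm(0)z+\tfrac12\ddot{V}_\pm(0)z^{2}+\cdots$, this coefficient equals $\tfrac12\ddot{V}_\pm(0)-\dot{V}_\pm(0)\dot{W}(0)$, and the derivatives $\dot{V}_\pm(0)$ and $\ddot{V}_\pm(0)$ I would evaluate exactly as $\ddot{W}(0)$ was evaluated in Proposition~\ref{propTrF}, from the first two $z$-derivatives \eqref{eqnYpmderatzero} of $Y_\pm$ at the origin and the accompanying expression for $\ddot{Y}_{\pm,12}(0,\xi)$. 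After the substitution $x=\eta_\pm(\xi)$ this should reproduce $\int_{\R_\mp}\Ar_\pm(\xi)^{2}\,d\xi+\beta_\pm(\R_\mp)$, which by the first step is the left-hand side of \eqref{eqnLCPars}; this last computation is routine but lengthy, entirely parallel to the evaluation of $\ddot{W}(0)$.
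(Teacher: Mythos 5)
Your opening reduction of the left-hand side to $\int_{\R_\mp}\Ar_\pm(\xi)^2d\xi+\beta_\pm(\R_\mp)$, the identification $V_\pm(\lambda)=c_\lambda^{\pm1}$ giving $\res{\lambda}g_\pm=-1/(\lambda^{3}\gamma_{\lambda,\pm}^{2})$, and the plan to match the $z^2$-Taylor coefficient of $V_\pm/W$ at the origin against the left-hand side are all sound and parallel to the paper (which works with $m_\pm(z)=V_\pm(z)/(zW(z))$ and shows $\dot V_\pm(0)=0$). The genuine gap is exactly the step you flag as the main obstacle: the contour estimate $\oint_{|z|=r_n}g_\pm(z)\,dz\to0$ \emph{cannot} be derived from upper bounds on $|V_\pm|$ together with Cartwright-class lower bounds on $|W|$, because the conclusion is not a consequence of such growth data. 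Concretely, for $c>0$ put $\widetilde V_\pm=V_\pm+cz^{2}W$. Then $\widetilde V_\pm$ is again real entire of Cartwright class with the same exponential bounds, $\widetilde V_\pm(0)=\dot{\widetilde V}_\pm(0)=0$, the quotient $\widetilde V_\pm/(zW)=m_\pm+cz$ is again a meromorphic Herglotz--Nevanlinna function with the same poles and the same residues (since $\widetilde V_\pm(\lambda)=V_\pm(\lambda)$ for $\lambda\in\sigma$), yet the residue of $\widetilde V_\pm/(z^{3}W)$ at the origin is shifted by $c$, so the sum-of-residues identity fails for $\widetilde V_\pm$ precisely when it holds for $V_\pm$. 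Any contour argument resting only on the estimates \eqref{eqnBoundThetaPhi}, \eqref{eqnPhipmatmp} and the Cartwright structure of $W$ from Corollary~\ref{corEvCar} would apply verbatim to $\widetilde V_\pm$, hence it cannot succeed. What the vanishing of the contour integrals actually encodes is the absence of a linear term in the Herglotz representation of $m_\pm$, i.e.\ $m_\pm(\I y)/(\I y)\to0$ as $y\to\infty$, and this is a structural property of the particular function $m_\pm$, not of the function class it lies in.

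This is where the paper's proof differs in an essential way: via \eqref{eqnmpmaslim} it identifies $m_\pm$ as the Weyl--Titchmarsh function of the canonical system \eqref{eqnCanSysString}, so that \cite[Lemma~5.1]{IndefiniteString} yields the Herglotz--Nevanlinna property and \cite[Lemma~7.1]{IndefiniteString} rules out the linear term; the Herglotz integral representation theorem then produces the convergent partial-fraction expansion \eqref{eqnmpmIntRep} with no contour integration at all (it also gives convergence of the residue series for free, although in your setting positivity of $\lambda\gamma_{\lambda,\pm}^{2}$ would salvage that point). To repair your argument you would have to import exactly this missing input—a proof from the system \eqref{eqnCanSysString} that $m_\pm(\I y)/(\I y)\to0$—at which point the contour machinery becomes superfluous: the representation \eqref{eqnmpmIntRep} is already in hand, and your concluding Taylor-coefficient computation at $z=0$ (which is the same in spirit as the paper's use of \eqref{eqnmpmaslim2} and \eqref{eqnYpmderatzero}) finishes the proof.
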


\begin{proof}
 To begin with, we introduce the function $m_\pm$ by
 \begin{align*}
  m_\pm(z) = \frac{V_\pm(z)}{zW(z)}, \quad z\in\C\backslash\R. 
 \end{align*}
 From the estimates in~\eqref{eqnPhipmatmp}, we see that  
 \begin{align*}
   \phi_\pm(z,x) \E^{\pm\frac{x}{2}} & \rightarrow W(z), & \phi_\pm^\qpm(z,x) \E^{\pm\frac{x}{2}} & \rightarrow \mp W(z), & x & \rightarrow\mp\infty,
 \end{align*}
 where the convergence is locally uniform in $z\in\C$. 
 In much the same manner as above (upon writing $\theta_\pm$ as in~\eqref{eqnphipmasmp} and using~\eqref{eqnBoundThetaPhi} as well as~\eqref{eqnphipmasymest} to estimate), one obtains inequalities similar to~\eqref{eqnPhipmatmp} for $\theta_\pm$ and concludes that 
 \begin{align*}
  \theta_\pm(z,x) \E^{\pm\frac{x}{2}} & \rightarrow \pm V_\pm(z), &  \theta_\pm^\qpm(z,x) \E^{\pm\frac{x}{2}} & \rightarrow - V_\pm(z), & x & \rightarrow\mp\infty,
 \end{align*} 
 where the convergence is again locally uniform in $z\in\C$. 
 Thus, we may write  
  \begin{align}\label{eqnmpmaslim}
  m_\pm(z) & = \pm \lim_{x\rightarrow\mp\infty} \frac{\theta_\pm(z,x)}{z\phi_\pm(z,x)} = \mp \lim_{\xi\rightarrow\mp\infty} \frac{Y_{\pm,11}(z,\xi)}{Y_{\pm,12}(z,\xi)},\quad z\in\C\backslash\R,
 \end{align}
 that is, the function $m_\pm$ is the Weyl--Titchmarsh function for~\eqref{eqnCanSysString}; cf.\ \cite[Equation~(6.2)]{IndefiniteString}.
 In particular, it is a meromorphic Herglotz--Nevanlinna function \cite[Lemma~5.1]{IndefiniteString} with simple poles at all points $\lambda\in\sigma$ (note that zero is not a pole since $V_\pm$ vanishes at zero) with residues given by  
 \begin{align*}
  \res{\lambda} m_\pm = \frac{V_\pm(\lambda)}{\lambda\dot{W}(\lambda)} = - \frac{1}{\lambda\gamma_{\lambda,\pm}^{2}}, \quad \lambda\in\sigma,
 \end{align*}
 in view of Lemma~\ref{lemWderlam}. 
 After a substitution, we obtain from~\eqref{eqnYpmderatzero} that 
 \begin{align*}
    \dot{\theta}_\pm(0,x) & = \mp \E^{\pm\frac{x}{2}}  \int_x^{\pm\infty} \alpha_\pm(s)ds, &  \dot{\theta}_\pm^\qd(0,x) & = \frac{1}{2} \E^{\pm\frac{x}{2}} \int_x^{\pm\infty} \alpha_\mp(s)ds, 
 \end{align*}
 for every $x\in\R$, which implies that $\dot{V}_\pm(0) = 0$. 
 Upon recalling the integral representation formula for Herglotz--Nevanlinna functions (see, for example, \cite[Section~5.3]{roro94}), this guarantees that the function $m_\pm$ admits the representation (use also \cite[Lemma~7.1]{IndefiniteString} to conclude that there is no linear term present)
 \begin{align}\label{eqnmpmIntRep}
  m_\pm(z) & =  \sum_{\lambda\in\sigma} \frac{z}{\lambda(\lambda-z)}  \frac{1}{\lambda\gamma_{\lambda,\pm}^{2}}, \quad z\in\C\backslash\R. 
 \end{align}
 Clearly, we may just as well write the function $m_\pm$ as the limit 
 \begin{align}\label{eqnmpmaslim2}
   m_\pm(z) & = \pm \lim_{x\rightarrow\mp\infty} \frac{\theta_\pm^\qpm(z,x)}{z\phi_\pm^\qpm(z,x)} = \mp \lim_{\xi\rightarrow\mp\infty} \frac{Y_{\pm,21}(z,\xi)}{Y_{\pm,22}(z,\xi)},\quad z\in\C\backslash\R.
 \end{align}
 Now we observe that~\eqref{eqnYpmderatzero} gives for every $\xi\in\R_\mp$ the expansion 
 \begin{align*}
  \frac{Y_{\pm,21}(z,\xi)}{Y_{\pm,22}(z,\xi)} = z \int_0^\xi \Ar_\pm(s)^2 ds + z \int_0^\xi d\beta_\pm + \OO(z^2), \qquad z\rightarrow 0.
 \end{align*}
 Since the convergence in~\eqref{eqnmpmaslim2} is uniform for all $z\in\C\backslash\R$ that lie in a small neighborhood of zero, we infer, after a substitution, that $m_\pm$ has the expansion 
 \begin{align*}
  m_\pm(z) = z \int_\R \E^{\pm x} \left(u'(x)\mp u(x)\right)^2 dx + z \int_\R \E^{\pm x} d\dip(x) + \OO(z^2), \qquad z\rightarrow0. 
 \end{align*}
 Upon differentiating~\eqref{eqnmpmIntRep} and letting $z\rightarrow0$, we obtain the identity in~\eqref{eqnLCPars}.  
 \end{proof}

 In this context, let us also mention the following result which characterizes the subclass of $\De$ that gives rise to purely positive/negative spectrum.  

\begin{proposition}\label{propDefinite}
 The spectrum $\sigma$ is positive/negative if and only if the Borel measure $\dip$ vanishes identically and the distribution $\omega$ is non-negative/non-positive. 
 In this case, the distribution $\omega$ can be represented by a non-negative/non-positive finite Borel measure on $\R$ (for simplicity denoted with $\omega$ as well) and 
 \begin{align}\label{eqnTFPN}
  \sum_{\lambda\in\sigma} \frac{1}{\lambda} = \int_\R d\omega. 
 \end{align}
\end{proposition}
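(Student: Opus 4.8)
The plan is to prove the two implications separately, and to treat only the positive case; the negative case follows by reflection (replacing $(u,\mu)$ by its mirror image, equivalently $z$ by $-z$, which turns $\sigma$ into $-\sigma$, $\omega$ into $-\omega$ and leaves $\dip$ non-negative). The ``if'' direction is easy and rests on the energy identity already encoded in~\eqref{eqnNormConstDef} and~\eqref{eqnNormConstH1}; the substantial ``only if'' direction I would reduce to the spectral theory of generalized indefinite strings from~\cite{IndefiniteString} via the Weyl--Titchmarsh function $m_\pm$.

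For the ``if'' direction, suppose $\dip$ vanishes identically and $\omega$ is a non-negative distribution. Being non-negative, $\omega$ is represented by a non-negative measure, and pairing it against $\phi_+(\lambda,\redot)^2\in H^1(\R)$ is meaningful. Fix $\lambda\in\sigma$ with eigenfunction $\phi_+(\lambda,\redot)$. Since $\dip\equiv0$, the defining relation~\eqref{eqnNormConstDef} reduces to $\gamma_{\lambda,+}^2=\omega(\phi_+(\lambda,\redot)^2)=\int_\R\phi_+(\lambda,x)^2\,d\omega\geq0$, whereas~\eqref{eqnNormConstH1} gives $\lambda\gamma_{\lambda,+}^2=\int_\R\phi_+'(\lambda,x)^2\,dx+\tfrac14\int_\R\phi_+(\lambda,x)^2\,dx>0$. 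A positive product with a non-negative second factor forces $\lambda>0$, so $\sigma$ is positive.

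For the ``only if'' direction, which is the crux, suppose $\sigma$ is positive. Recall from the proof of Proposition~\ref{propMatZ} that $m_\pm$ is the Weyl--Titchmarsh function of the canonical system~\eqref{eqnCanSysString}, hence the Weyl function of the generalized indefinite string with coefficients $\Ar_\pm$ and $\beta_\pm$ in the sense of~\cite{IndefiniteString}, and that its poles coincide with $\sigma$. From the integral representation~\eqref{eqnmpmIntRep} together with $\sigma\subseteq(0,\infty)$ one reads off that $m_\pm$ is holomorphic and negative on $(-\infty,0)$ and, by~\eqref{eqnLCPars}, bounded there; equivalently, $m_\pm$ is of Stieltjes class (all its spectral mass lies on $(0,\infty)$). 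By the characterization of those generalized indefinite strings whose Weyl function is of Stieltjes class in~\cite{IndefiniteString}, this forces the measure $\beta_\pm$ to vanish and the coefficient $\Ar_\pm$ to arise from a non-negative measure (so that the string is a classical Krein string). Unwinding the definitions~\eqref{eqnDefa} and~\eqref{eqnDefbeta}: since $\E^{\pm x}>0$ and $\eta_\pm$ is a diffeomorphism onto $\R$, the vanishing of $\beta_\pm$ is equivalent to $\dip\equiv0$, and through the correspondence of Appendix~\ref{appRelCan} the non-negativity of $\Ar_\pm$ translates into non-negativity of the distribution $\omega$ on $\R$. Either sign already suffices, since each half-line string encodes all of $\R$.

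Finally, once $\dip\equiv0$ and $\omega\geq0$ are known, $\omega$ is a non-negative measure, and writing $u=\tfrac12\E^{-|\redot|}*\omega$ and integrating by Fubini gives $\int_\R u(x)\,dx=\omega(\R)$; this is finite because, by the first trace formula of Proposition~\ref{propTrF}, it equals the convergent sum $\csum_{\lambda\in\sigma}1/\lambda$, so $\omega$ is a finite measure. As $\sigma$ is positive, the symmetric limit $\csum_{\lambda\in\sigma}1/\lambda$ coincides with the ordinary convergent sum $\sum_{\lambda\in\sigma}1/\lambda$, and the trace formula becomes $\sum_{\lambda\in\sigma}1/\lambda=\int_\R u\,dx=\int_\R d\omega$, which is~\eqref{eqnTFPN}. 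I expect the main obstacle to be the ``only if'' direction: correctly importing the Stieltjes-class characterization for generalized indefinite strings from~\cite{IndefiniteString} and matching it, through Appendix~\ref{appRelCan}, to the vanishing of $\beta_\pm$ and the sign of $\Ar_\pm$.
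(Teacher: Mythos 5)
Your proposal is correct and follows essentially the same route as the paper: the ``if'' direction via~\eqref{eqnNormConstDef} and~\eqref{eqnNormConstH1}, the ``only if'' direction by importing the positive-spectrum (Stieltjes-class) characterization for generalized indefinite strings from \cite{IndefiniteString} (the paper cites its Lemma~7.2) to conclude that $\beta_\pm$ vanishes and $\Ar_\pm$ has a non-decreasing representative, and the first trace formula of Proposition~\ref{propTrF} for~\eqref{eqnTFPN}. The only differences are cosmetic: the paper makes the step ``$\Ar_\pm$ non-decreasing $\Rightarrow$ $\omega\geq 0$'' explicit through a change-of-variables identity for $\omega(h)$ and proves~\eqref{eqnTFPN} with cutoff functions $h_k$ instead of your Fubini argument with the kernel $\tfrac{1}{2}\E^{-|x-s|}$; note also that it is the monotonicity of $\Ar_\pm$ (non-negativity of its distributional derivative), not the sign of $\Ar_\pm$ itself, that transfers to $\omega$ --- your parenthetical states this correctly even though one later sentence says ``non-negativity of $\Ar_\pm$''.
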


\begin{proof}
 If $\dip$ vanishes identically and $\omega$ is non-negative/non-positive, then~\eqref{eqnNormConstDef} and~\eqref{eqnNormConstH1} show that the spectrum is positive/negative. 
 Conversely, if the spectrum is positive/negative, then~\cite[Lemma~7.2]{IndefiniteString} shows that the measures $\beta_+$ and $\beta_-$ vanish identically as well as that the functions $\Ar_+$ and $\Ar_-$ have non-decreasing/non-increasing representatives. 
 Clearly, the measures $\beta_+$ and $\beta_-$ vanish identically if and only if so does $\dip$. 
 Given some $h\in H^1(\R)$ with compact support, we set 
 \begin{align}\label{eqnDefhpm}
    h_\pm(\xi) = \mp\xi h(\eta_\pm(\xi)), \quad \xi\in\R_\mp,
 \end{align}
 and note that 
 \begin{align*}
  \omega(h) =  -\frac{1}{2} \int_{\R_-} h_+'(\xi) \Ar_+(\xi) d\xi - \frac{1}{2} \int_{\R_+} h_-'(\xi) \Ar_-(\xi) d\xi.
 \end{align*}
 This shows that $\omega$ is a non-negative/non-positive distribution and therefore can be represented by a non-negative/non-positive Borel measure on $\R$. 
 Now for every $k\in\N$ let $h_k\in H^1(\R)$ be the piecewise linear function such that $h_k$ is equal to one on $[-k,k]$, equal to zero outside of $[-k-1,k+1]$ and linear in between. 
 Then from  
 \begin{align*}
  \int_\R h_k\, d\omega = \int_\R u(x) h_k(x)dx + \int_\R u'(x) h_k'(x)dx
 \end{align*}
 we see that $\omega$ is a finite measure as well as~\eqref{eqnTFPN} 
 upon letting $k\rightarrow\infty$.
\end{proof} 

An identity similar to~\eqref{eqnTFPN} also holds under the sole assumption that the distribution $\omega$ can be represented by a real-valued Borel measure on $\R$ (again denoted with $\omega$ as well) with finite total variation. 
More precisely, one has  the equality 
\begin{align}
 \csum_{\lambda\in\sigma} \frac{1}{\lambda} = \int_\R d\omega
\end{align}
in this case, which follows readily from the first trace formula in Proposition~\ref{propTrF}.

\section{The inverse spectral problem}

 We are now going to solve the corresponding inverse spectral problem for the class $\De$. 
 Let us point out that the given sufficient conditions on the spectral data are also necessary in view of Corollary~\ref{corEvCar} (in conjunction with well-known properties of entire functions of Cartwright class; for example, see \cite[Chapter~8]{bo54} or \cite[Theorem~17.2.1]{le96}) as well as Proposition~\ref{propMatZ} and Lemma~\ref{lemWderlam}.   
 Thus, we indeed obtain a complete characterization of all possible spectral data for the class $\De$.   
   
\begin{theorem}\label{thmIP}
 Let $\sigma$ be a discrete set of nonzero reals such that the limit\footnote{Recall that we denote with $n_\sigma(r)$ the number of all $\lambda\in\sigma$ with modulus not greater than $r$.}
 \begin{align}\label{eqnIPdens}
  \lim_{r\rightarrow\infty} \frac{n_\sigma(r)}{r} 
 \end{align} 
 exists in $[0,\infty)$ and such that the entire function $W$ is well-defined by\footnote{If the limit in~\eqref{eqnIPdens} exists in $[0,\infty)$, then this is the case if and only if the sum
  $\csum_{\lambda\in\sigma} \frac{1}{\lambda}$ exists.}
 \begin{align}\label{eqnIPW}
  W(z) = \cprod_{\lambda\in\sigma} \biggl(1-\frac{z}{\lambda}\biggr), \quad z\in\C.
 \end{align}
 Moreover, for each $\lambda\in\sigma$ let $\kappa_{\lambda}\in\R$ such that the sum
 \begin{align}\label{eqnIPlogccCond}
   \sum_{\lambda\in\sigma} \frac{1}{\lambda^2} \frac{\E^{|\kappa_{\lambda}|}}{|\lambda \dot{W}(\lambda)|} 
  \end{align}
 is finite. 
 Then there is a unique pair $(u,\mu)\in\De$ such that the associated spectrum coincides with $\sigma$ and the logarithmic coupling constants are $\kappa_\lambda$ for each $\lambda\in\sigma$. 
\end{theorem}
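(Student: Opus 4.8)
The plan is to reduce the entire reconstruction to the inverse spectral theory for indefinite strings developed in \cite{IndefiniteString}, applied to a suitably defined Weyl--Titchmarsh function. First I would recover the coupling constants from the given data. The Herglotz--Nevanlinna property of the function in~\eqref{eqnGreens} forces its residue $\lambda c_\lambda \phi_+(\lambda,x)^2/\dot{W}(\lambda)$ at each $\lambda\in\sigma$ to be non-positive, so that necessarily $\sgn(c_\lambda) = -\sgn(\lambda\dot{W}(\lambda))$. Since $\dot{W}(\lambda)$ is already determined by $\sigma$ through~\eqref{eqnIPW}, this lets me set $c_\lambda = -\sgn(\lambda\dot{W}(\lambda))\E^{\kappa_\lambda}$ and then define norming constants $\gamma_{\lambda,\pm}^2 = -c_\lambda^{\mp1}\dot{W}(\lambda)$ as dictated by~\eqref{eqnWlam}; a short computation gives $1/(\lambda\gamma_{\lambda,\pm}^2) = \E^{\pm\kappa_\lambda}/|\lambda\dot{W}(\lambda)| > 0$. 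With these at hand I would define the two candidate Weyl--Titchmarsh functions
\begin{align*}
 m_\pm(z) = \sum_{\lambda\in\sigma} \frac{z}{\lambda(\lambda-z)} \frac{1}{\lambda\gamma_{\lambda,\pm}^2}, \quad z\in\C\backslash\R,
\end{align*}
in analogy with~\eqref{eqnmpmIntRep}. The key point is that finiteness of the sum in~\eqref{eqnIPlogccCond} bounds both series termwise (since $\E^{\pm\kappa_\lambda}\leq\E^{|\kappa_\lambda|}$), so that each $m_\pm$ is a well-defined meromorphic Herglotz--Nevanlinna function with simple poles exactly on $\sigma$, vanishing at the origin, and with finite first moment $m_\pm'(0) = \sum_\lambda \lambda^{-2}(\lambda\gamma_{\lambda,\pm}^2)^{-1} < \infty$.

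Next I would reconstruct the coefficients from $m_+$. The function $m_+$ meets all the requirements for the inverse spectral problem for indefinite strings, so \cite{IndefiniteString} provides a unique such string, that is, a unique pair consisting of a square-integrable function $\Ar_+$ and a finite non-negative Borel measure $\beta_+$ on $\R_-$ whose associated Weyl--Titchmarsh function for~\eqref{eqnCanSysString} coincides with $m_+$. Undoing the Liouville-type change of variables of Theorem~\ref{thmThetaPhi} --- inverting the relations~\eqref{eqnDefa} and~\eqref{eqnDefbeta} through the diffeomorphism $\eta_+$ to recover $\alpha_+ = -u'+u$ and $\dip$ on all of $\R$, then reconstructing $u$ from $\alpha_+$ as the decaying solution of $-u'+u=\alpha_+$ and thereby $\mu$ via~\eqref{eqnDefdip} --- yields a pair $(u,\mu)\in\D$. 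Square-integrability of $\Ar_+$ together with finiteness of $\beta_+$, which is precisely the finiteness of $m_+'(0)$, translates back into the growth bound~\eqref{eqnDedef} for the plus sign, so that $(u,\mu)\in\De_+$.

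It then remains to check that this pair reproduces the prescribed spectral data and in fact lies in $\De = \De_+\cap\De_-$. Running the direct theory of Section~\ref{secSP} on the constructed $(u,\mu)$, its plus Weyl--Titchmarsh function is $m_+$ by the bijectivity of the string-to-Weyl-function correspondence, so its spectrum is the pole set $\sigma$ and the residues $-1/(\lambda\gamma_{\lambda,+}^2)$ determine its plus norming constants $\gamma_{\lambda,+}^2$; Lemma~\ref{lemWderlam} then recovers the coupling constants $c_\lambda$ and hence the logarithmic coupling constants $\kappa_\lambda$. The same lemma forces the minus norming constants of $(u,\mu)$ to equal $\gamma_{\lambda,-}^2$, so the minus Weyl--Titchmarsh function attached to $(u,\mu)$ is a Herglotz--Nevanlinna function with the same poles, residues and normalization as the prescribed $m_-$ and therefore equals it. I expect the verification that $(u,\mu)\in\De_-$ to be the main obstacle, precisely because the reconstruction directly controls only the end at $+\infty$: the resolution is that the extra information carried by the symmetric condition~\eqref{eqnIPlogccCond} (through the factor $\E^{|\kappa_\lambda|}$ rather than $\E^{\kappa_\lambda}$) renders $m_-'(0)$ finite, whereupon the identity~\eqref{eqnLCPars} of Proposition~\ref{propMatZ}, read in the direction ``finite sum implies finite integral'', yields the minus growth bound~\eqref{eqnDedef}. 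Finally, uniqueness follows from injectivity of the string-to-Weyl-function correspondence in \cite{IndefiniteString}: any $(u,\mu)\in\De$ with the given spectrum and logarithmic coupling constants must, by the direct theory and~\eqref{eqnWlam}, have plus Weyl--Titchmarsh function equal to $m_+$, hence give rise to the same indefinite string and thus to the same pair $(u,\mu)$.
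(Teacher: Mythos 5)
Your reduction to the inverse theory of \cite{IndefiniteString} follows the paper's strategy (the paper reconstructs from $m_-$ rather than $m_+$, which is immaterial), and your uniqueness argument is exactly the paper's. The existence part, however, has a genuine gap at precisely the point you flag as ``the main obstacle''. First, \cite[Theorem~6.1]{IndefiniteString} does \emph{not} provide a globally square integrable coefficient $\Ar_+$ and a finite measure $\beta_+$: it provides a locally square integrable function and a locally finite measure (this is how the paper itself invokes it), so neither the plus growth bound in \eqref{eqnDedef} nor even membership $(u,\mu)\in\D$ comes for free --- the plus condition alone gives essentially no control of $u$ near $-\infty$ (a constant $u$ satisfies it there), so $u\in H^1(\R)$ and finiteness of $\mu$ require proof. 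Second, the statement you lean on --- that finiteness of $m_\pm'(0)$ translates into the corresponding growth bound --- is the identity \eqref{eqnLCPars}, and Proposition~\ref{propMatZ} establishes that identity only for pairs already known to lie in $\De$: its proof uses both solutions $\phi_\pm$ of Theorem~\ref{thmThetaPhi} and hence both growth conditions. Reading it ``in the direction finite sum implies finite integral'', for a string about which you only know the local data produced by the inverse theorem, is circular; this circularity occurs once for the plus bound and a second time when you deduce the minus bound from finiteness of $m_-'(0)$.

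These missing implications are the actual content of the theorem, and the paper spends the bulk of its existence proof on them: it shows $u\in H^1(\R)$ by approximating with finite-spectrum data and exploiting the uniform $H^1$ bounds coming from the trace formulas of Proposition~\ref{propTrF}, and it obtains the growth bound at the far end ($+\infty$ in its normalization) by a de Branges space argument --- the isometric embedding \eqref{eqndBemb} of the spaces $\mathcal{B}_n$, de Branges' ordering theorem, and his inverse theorem for canonical systems applied to the Herglotz--Nevanlinna function built from the \emph{opposite} weights $\E^{+\kappa_\lambda}/|\lambda\dot{W}(\lambda)|$, followed by a comparison of the two Hamiltonians. That is where the symmetric hypothesis \eqref{eqnIPlogccCond}, with $\E^{|\kappa_\lambda|}$ rather than $\E^{\mp\kappa_\lambda}$, is actually consumed; in your proposal it enters only through the unproven converse of Proposition~\ref{propMatZ}. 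To repair your argument you would need an independent proof that an indefinite string whose Weyl function is analytic at the origin with the prescribed pole data has square integrable coefficients up to infinity; no such result is available in \cite{IndefiniteString}, and supplying it is exactly what the de Branges machinery in the paper is for.
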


 \begin{proof}
    {\em Uniqueness.}
  Since the given spectral data uniquely determines the Weyl--Titchmarsh function $m_-$ in view of~\eqref{eqnmpmIntRep} and Lemma~\ref{lemWderlam}, the uniqueness part in \cite[Theorem~6.1]{IndefiniteString} shows that the function $\Ar_-$ and the Borel measure $\beta_-$ are uniquely determined as well. 
  It follows readily from the definition of $\beta_-$ in~\eqref{eqnDefbeta} that this also uniquely determines the Borel measure $\dip$. 
  Furthermore, because of
  \begin{align}\label{eqnuitofa}
   u(\eta_\pm(\xi)) = - \frac{1}{\xi} \int_0^\xi  \Ar_\pm(s) s\, ds, \quad \xi\in\R_\mp,  
  \end{align} 
  we conclude that the function $u$ is uniquely determined too and thus so is $\mu$. 
  
  {\em Existence.} 
  Let us introduce the meromorphic Herglotz--Nevanlinna function  
   \begin{align*}
    m(z) =  \sum_{\lambda\in\sigma} \frac{z}{\lambda(\lambda-z)}  \frac{\E^{-\kappa_\lambda}}{|\lambda\dot{W}(\lambda)|}, \quad z\in\C\backslash\R.
   \end{align*}
  From the existence part of \cite[Theorem~6.1]{IndefiniteString}, we obtain a real-valued and locally square integrable function $\Ar$ on $[0,\infty)$ and a non-negative Borel measure $\beta$ on $[0,\infty)$ with $\beta(\lbrace0\rbrace)=0$ such that the function $m$ is the Weyl--Titchmarsh function for  
   \begin{align}\begin{split}\label{eqnCanSysStringIP}
  Y(z,\xi) = \begin{pmatrix} 1 & 0 \\ 0 & 1 \end{pmatrix} & + z \int_0^\xi \begin{pmatrix} -\Ar(s) & -1 \\ \Ar(s)^2 & \Ar(s) \end{pmatrix} Y(z,s)ds \\ 
       & + z \int_{0}^{\xi} \begin{pmatrix} 0 & 0 \\ 1 & 0 \end{pmatrix} Y(z,s) d\beta(s), \quad \xi\in[0,\infty),
 \end{split}\end{align} 
 that is, if $Y(z,\redot)$ denotes the unique solution of the integral equation~\eqref{eqnCanSysStringIP} for each $z\in\C$, then the function $m$ is given by
 \begin{align*}
  m(z) = \lim_{\xi\rightarrow\infty} \frac{Y_{11}(z,\xi)}{Y_{12}(z,\xi)}, \quad z\in\C\backslash\R. 
 \end{align*}
  In order to state a required fact from \cite{IndefiniteString} in a concise way, we introduce the set $\Sigma\subseteq (0,\infty)$ that consists of all $\xi\in(0,\infty)$ such that the function $\Ar$ is not equal to a constant almost everywhere near $\xi$ or such that $\xi$ belongs to the topological support of $\beta$. 
  Let $f_1$, $f_2\in L^2(0,\infty)$ be orthogonal to all $h\in L^2(0,\infty)$ that satisfy 
  \begin{align}\label{eqnhorth}
   \int_0^\xi h(s)ds = 0, \quad \xi\in\Sigma,
  \end{align}
   and functions $g_1$, $g_2$ on $[0,\infty)$ be square integrable with respect to $\beta$. 
  Upon setting
  \begin{align*}
   F_i(\lambda) = \int_0^\infty \frac{Y_{12}'(\lambda,\xi)}{\lambda} f_i(\xi)d\xi +\int_0^\infty Y_{12}(\lambda,\xi)g_i(\xi)d\beta(\xi), \quad \lambda\in\sigma, ~i=1,2,
  \end{align*}
  we have the identity  
  \begin{align}\label{eqnIPPars}
   \sum_{\lambda\in\sigma} F_1(\lambda) F_2(\lambda)^\ast \frac{\E^{-\kappa_\lambda}}{|\lambda\dot{W}(\lambda)|} = \int_0^\infty f_1(\xi)f_2(\xi)^\ast d\xi + \int_0^\infty g_1(\xi)g_2(\xi)^\ast d\beta(\xi)
  \end{align}
  since the spectral transform introduced in~\cite[Section~5]{IndefiniteString} is a partial isometry.

   Motivated by the relation~\eqref{eqnuitofa}, we define the real-valued function $u$ on $\R$ via 
  \begin{align}\label{eqnDefupm}
   u(\ln(\xi)) = - \frac{1}{\xi} \int_0^\xi  \Ar(s) s\, ds, \quad \xi\in(0,\infty),  
  \end{align} 
  so that $u$ belongs to $H^1_\loc(\R)$ 
  and satisfies 
  \begin{align*}
   \Ar(\xi) = \frac{-u'(\ln(\xi)) - u(\ln(\xi))}{\xi}
  \end{align*}
  for almost all $\xi\in(0,\infty)$. 
  Furthermore, we define the Borel measure $\mu$ on $\R$ by~\eqref{eqnDefdip}, where $\dip$ is the Borel measure on $\R$  such that 
  that we have  
  \begin{align*}
   \beta(B) =  \int_{\ln(B)} \E^{-x} d\dip(x)
  \end{align*}
  for every Borel set $B\subseteq(0,\infty)$. 
  Since the function $\Ar$ is square integrable near zero and the Borel measure $\beta$ is finite near zero, it follows readily that 
  \begin{align*}
   \int_{-\infty}^{c} \E^{- x} \left(u'(x) + u(x)\right)^2 dx +   \int_{-\infty}^{c} \E^{-x} d\dip(x) < \infty
  \end{align*}
  for every $c\in\R$. 
  In particular, this guarantees that the function $u$ lies in $H^1(\R)$ near $-\infty$ (more precisely, one sees from~\eqref{eqnDefupm} 
  that $u$ decays exponentially near $-\infty$ and since $u' + u$ is clearly square integrable near $-\infty$, we conclude that $u'$ is as well) and that the measure $\mu$ is finite near $-\infty$. 
  We are now left to verify that the pair $(u,\mu)$ actually belongs to $\De$. 
  In fact, in this case it is readily seen that $\Ar$ coincides with the function $\Ar_-$ as introduced by~\eqref{eqnDefa} and that $\beta$ coincides with the Borel measure $\beta_-$ as introduced by~\eqref{eqnDefbeta}. 
  From~\eqref{eqnmpmaslim}, we then see that the function $m$ coincides with the Weyl--Titchmarsh function $m_-$ for this pair as introduced in the proof of Proposition~\ref{propMatZ}.
  Upon taking~\eqref{eqnmpmIntRep} and Lemma~\ref{lemWderlam} into account, one then may conclude that the pair $(u,\mu)$ indeed gives rise to the desired spectral data.  

  In order to show that the pair $(u,\mu)$ belongs to the set $\De$, let us first consider the special case when $\Ar$ is equal to some constant $\Ar_0\in\R$ almost everywhere near $\infty$ and $\beta$ vanishes near $\infty$. 
  From this we readily  infer that
  \begin{align*}
   Y(z,\xi) = \begin{pmatrix} 1- z\Ar_0(\xi-\xi_0) & -z(\xi-\xi_0) \\ z\Ar_0^2(\xi-\xi_0)  & 1 + z\Ar_0(\xi-\xi_0) \end{pmatrix} Y(z,\xi_0)
  \end{align*}
  as long as $\xi$ and $\xi_0$ are close enough to $\infty$. 
  Thus, the function $m$ is given  by 
  \begin{align*}
   m(z) = \frac{\Ar_0 Y_{11}(z,\xi_0) + Y_{21}(z,\xi_0)}{\Ar_0 Y_{21}(z,\xi_0) + Y_{22}(z,\xi_0)}, \quad z\in\C\backslash\R,
  \end{align*}
  and evaluating the limit as $z\rightarrow0$ in this equation shows that $\Ar_0$ has to be zero, which immediately implies that the pair $(u,\mu)$ belongs to $\De$. 
  In particular, this proves the claim in the case when the set $\sigma$ is finite (since then the set $\Sigma$ is finite as well).  
  Thus, back in the general case,  for every $k\in\N$ we may find a $(u_k,\mu_k)\in\De$ such that the corresponding Weyl--Titchmarsh function $m_{k,-}$ is given by 
  \begin{align*}
   m_{k,-}(z) = \mathop{\sum_{\lambda\in\sigma}}_{|\lambda|\leq k} \frac{z}{\lambda(\lambda-z)}  \frac{\E^{-\kappa_\lambda}}{|\lambda\dot{W}(\lambda)|}, \quad z\in\C\backslash\R.
  \end{align*} 
  With the functions $\Ar_{k,-}$ defined as in~\eqref{eqnDefa}, we infer from \cite[Proposition~6.2]{IndefiniteString} that 
  \begin{align*}
   \lim_{k\rightarrow\infty} \int_{0}^\xi \Ar_{k,-}(s) ds = \int_0^\xi \Ar(s) ds,
  \end{align*}
  locally uniformly for all $\xi\in[0,\infty)$. 
  In view of~\eqref{eqnuitofa} and~\eqref{eqnDefupm}, this shows that the functions $u_k$ converge pointwise to $u$ and since they are uniformly bounded in $H^1(\R)$ by Proposition~\ref{propTrF}, a compactness argument shows that $u$ belongs to $H^1(\R)$. 
  
  To verify also the remaining growth restrictions on $(u,\mu)$ in the general case, we now may assume that there is an increasing sequence $x_n\rightarrow\infty$ such that $\xi_n=\E^{x_n}$ belongs to $\Sigma$ for every $n\in\N_0$. 
  We fix some $n\in\N_0$ and define the function $J_n$ by 
    \begin{align*}
   J_n(\zeta,z) = \frac{Z_1(z,\xi_n)Z_2(\zeta,\xi_n)^\ast - Z_2(z,\xi_n)Z_1(\zeta,\xi_n)^\ast}{z-\zeta^\ast}, \quad \zeta,\, z\in\C\backslash\R,
  \end{align*}  
 where $Z(z,\redot)$ is the Weyl solution given by 
  \begin{align*}
   Z(z,\xi) = Y(z,\xi) \begin{pmatrix}  zW(z) \\ -V(z) \end{pmatrix}, \quad \xi\in[0,\infty), 
  \end{align*}
  for every $z\in\C$, and $V$ is the entire function defined in such a way that
   \begin{align*}
   V(z) = zW(z) m(z), \quad z\in\C\backslash\R. 
   \end{align*}
  Because the function $Z(z,\redot)$ is a Weyl solution for every $z\in\C\backslash\R$, we note that $J_n$ can be rewritten as (see the proof of \cite[Lemma~5.1]{IndefiniteString})  
  \begin{align*}
   J_n(\zeta,z) =\int_{\xi_n}^\infty \frac{Z_1'(z,s)}{z} \frac{Z_1'(\zeta,s)^\ast}{\zeta^\ast} ds + \int_{\xi_n}^\infty Z_1(z,s) Z_1(\zeta,s)^\ast d\beta(s), \quad \zeta,\, z\in\C\backslash\R. 
  \end{align*}  
   In particular, this implies that the entire function $E_n$ defined by 
   \begin{align*}
    E_n(z) = z\psi(z,x_n) - \I \psi^\qp(z,x_n), \quad z\in\C,
   \end{align*}
   is a de Branges function, where $\psi(z,\redot)$ is the solution of the differential equation~\eqref{eqnDE}  so that  (compare the proof of Theorem~\ref{thmThetaPhi})
   \begin{align*}
   \begin{pmatrix} z\psi(z,x) \\ 
        -\psi^\qm(z,x) \end{pmatrix} = \begin{pmatrix} \E^{-\frac{x}{2}} & 0 \\ 0 & \E^{\frac{x}{2}} \end{pmatrix} Z(z,\E^{x}), \quad x\in\R,
 \end{align*}
   for each $z\in\C$, with the quantities $\omega$ and $\dip$ defined as in~\eqref{eqnDefomegaapp} and~\eqref{eqnDefdip}.  
   The reproducing kernel $K_n$ in the corresponding de Branges space $\mathcal{B}_n$ is given by 
   \begin{align*}
     K_n(\zeta,z) = J_n(\zeta,z) + \psi(z,x_n)\psi(\zeta,x_n)^\ast, \quad \zeta,\, z\in\C\backslash\R.
   \end{align*}
   An integration by parts and the integral equation~\eqref{eqnCanSysStringIP} show that for every $z\in\C\backslash\R$ the function $Z_1'(z,\redot)$ restricted to $(\xi_n,\infty)$ is orthogonal to all functions $h\in L^2(0,\infty)$ that satisfy~\eqref{eqnhorth}. 
  Thus, the identity~\eqref{eqnIPPars} for the particular functions
  \begin{align*}
   f_i(s) & = \frac{1}{z_i} \begin{cases} Z_1(z_i,\xi_n)\E^{-x_n}, & s\in[0,\xi_n), \\  Z_1'(z_i,s), & s\in[\xi_n,\infty), \end{cases} & g_i(s) & = \begin{cases} 0, & s\in[0,\xi_n), \\ Z_1(z_1,s), & s\in[\xi_n,\infty), \end{cases}
  \end{align*}
  with $z_1=z^\ast$ and $z_2=\zeta^\ast$ gives (note that $Z_1(\lambda,\redot) = - V(\lambda)Y_{12}(\lambda,\redot)$ for all $\lambda\in\sigma$)
   \begin{align*}
    \sum_{\lambda\in\sigma} \frac{K_n(z,\lambda)}{V(\lambda)} \frac{K_n(\zeta,\lambda)^\ast}{V(\lambda)^\ast} \frac{\E^{-\kappa_\lambda}}{|\lambda\dot{W}(\lambda)|} & = K_n(z,\zeta) \\
          & = \spr{K_n(z,\redot)}{K_n(\zeta,\redot)}_{\mathcal{B}_n}, \quad \zeta,\, z\in\C\backslash\R.
   \end{align*}
  The values of $V$ on the set $\sigma$ are readily evaluated and we obtain 
   \begin{align}\label{eqndBemb}
    \sum_{\lambda\in\sigma} |F(\lambda)|^2 \frac{\E^{\kappa_\lambda}}{|\lambda\dot{W}(\lambda)|} 
             = \|F\|_{\mathcal{B}_n}^2, \quad F\in\mathcal{B}_n,
   \end{align}
   after employing simple linearity, continuity and density arguments.
  
  Now let $H$ be a locally integrable, trace normed, real, symmetric and non-negative definite $2\times2$ matrix function on $[0,\infty)$ such that if $M(z,\redot)$ denotes the unique solution of the integral equation 
  \begin{align*}
   M(z,t) = \begin{pmatrix} 1 & 0 \\ 0 & 1 \end{pmatrix} + z \int_{0}^t \begin{pmatrix} 0 & -1 \\ 1 & 0 \end{pmatrix} H(s)M(z,s)ds, \quad t\in[0,\infty), 
  \end{align*}
  for every $z\in\C$, then we have    
  \begin{align*}
    \lim_{t\rightarrow\infty} \frac{M_{11}(z,t)}{M_{12}(z,t)} 
         = \sum_{\lambda\in\sigma} \frac{z}{\lambda(\lambda-z)} \frac{\E^{\kappa_\lambda}}{|\lambda\dot{W}(\lambda)|}, \quad z\in\C\backslash\R.
  \end{align*}
  Such a function $H$ is guaranteed to exist by the solution of the inverse problem for canonical systems due to de Branges; see \cite[Theorem~XII]{dB61a}, \cite[Theorem~2.4]{wi14}. 
  Finiteness of the sum in~\eqref{eqnIPlogccCond} implies (see \cite[Lemma~5.5]{lawo02}, \cite[Theorem~6.17]{roro94} and note that existence of~\eqref{eqnIPdens} and~\eqref{eqnIPW} together establish \cite[Condition~(C2)]{lawo02}; cf.\ \cite[\S 8.3.6]{bo54}) 
  that the entire function $W$ belongs to the Cartwright class. 
  Moreover, since $m$ is a Herglotz--Nevanlinna function, so does  the function $V$ and thus also $E_n$ for every $n\in\N_0$. 
  In view of \cite[Theorem~VII]{dB62} in conjunction with~\eqref{eqndBemb} and upon employing \cite[Theorem~I]{dB60},  we obtain a $t_n\in(0,\infty)$ so that 
  \begin{align*}
    \begin{pmatrix} z\psi(z,x_n) \\ -\psi^\qp(z,x_n) \end{pmatrix} = \Gamma_n \begin{pmatrix} M_{12}(z,t_n) \\ M_{22}(z,t_n) \end{pmatrix}, \quad z\in\C,
  \end{align*}
  for a real $2\times2$ matrix $\Gamma_n$ with $\det\Gamma_n=-1$. 
  Evaluating at zero shows that 
  \begin{align*}
   \Gamma_n = \begin{pmatrix} -\E^{\frac{x_n}{2}} & 0 \\  \varepsilon_n & \E^{-\frac{x_n}{2}}  \end{pmatrix}
  \end{align*}
  for some $\varepsilon_n\in\R$. 
  We note that the sequence $t_n$ may be chosen in such a way that it is non-increasing because the sequence of de Branges spaces $\mathcal{B}_n$ is non-increasing with respect to inclusion.  
  Thus, for every $n\in\N_0$ we have on the one side 
  \begin{align*}
     \begin{pmatrix} z\psi(z,x_0) \\ -\psi^\qp(z,x_0) \end{pmatrix}  & =  \Gamma_0 M(z,t_0) M(z,t_n)^{-1} \Gamma_n^{-1} \begin{pmatrix} z\psi(z,x_n) \\ -\psi^\qp(z,x_n) \end{pmatrix}, \quad z\in\C,
  \end{align*}
  and on the other side (due to Lemma~\ref{lemEquCanSys}) also 
  \begin{align*}
   \begin{pmatrix} z\psi(z,x_0) \\ -\psi^\qp(z,x_0) \end{pmatrix}  = \Omega_+(z,x_0,x_n) \begin{pmatrix} z\psi(z,x_n) \\ -\psi^\qp(z,x_n) \end{pmatrix}, \quad z\in\C,
  \end{align*}
  where $\Omega_\pm(z,\ledot,x_n)$ denotes the matrix valued solution of the system~\eqref{eqnEquCanSys} such that $\Omega_\pm(z,x_n,x_n)$ is the identity matrix.
  In view of~\cite[Problem~100]{dB68}, 
   this gives 
  \begin{align*}
   M(z,t_0)M(z,t_n)^{-1} = \Gamma_0^{-1} \Omega_+(z,x_0,x_n) \Gamma_n, \quad z\in\C.
  \end{align*}
  Differentiating with respect to $z$ and evaluating at zero, we obtain
  \begin{align*}
   \int_{t_n}^{t_0} H(s)ds & = \int_{x_0}^{x_n} \E^{-s} ds \begin{pmatrix} \varepsilon_0\E^{\frac{x_0}{2}}\varepsilon_n\E^{\frac{x_n}{2}} & \varepsilon_0\E^{\frac{x_0}{2}}  \\  \varepsilon_n\E^{\frac{x_n}{2}} & 1 \end{pmatrix} \\
    & \qquad + \int_{x_0}^{x_n} u'(s)-u(s)\, ds \begin{pmatrix} \varepsilon_0\E^{\frac{x_0}{2}} +\varepsilon_n\E^{\frac{x_n}{2}} & 1 \\ 1 & 0 \end{pmatrix} \\
    & \qquad + \int_{x_0}^{x_n} \E^s \left(u'(s)-u(s)\right)^2 ds \begin{pmatrix} 1 & 0 \\ 0 & 0 \end{pmatrix} + \int_{x_0}^{x_n} \E^s  d\dip(s) \begin{pmatrix} 1 & 0 \\ 0 & 0 \end{pmatrix}. 
  \end{align*}
  Since $H$ is integrable near zero, the top-right entry shows that the integrals 
  \begin{align*}
   \int_{x_0}^{x_n} u'(s) - u(s)\, ds
  \end{align*}
  are uniformly bounded for all $n\in\N_0$. 
  Moreover, from the bottom-left entry we now infer that the quantities $\varepsilon_n\E^{\frac{x_n}{2}}$ are uniformly bounded for all $n\in\N_0$ as well. 
  Finally, the top-left entry implies that the sum 
  \begin{align*}
   \int_{x_0}^{x_n} \E^s \left(u'(s) - u(s)\right)^2 ds +  \int_{x_0}^{x_n} \E^{s} d\dip(s)
  \end{align*}
  is uniformly bounded for all $n\in\N_0$, which concludes the proof since $x_n\rightarrow\infty$. 
\end{proof}
   
   For a discrete set $\sigma$ of nonzero reals such that the limit~\eqref{eqnIPdens} exists in $[0,\infty)$ and such that the entire function $W$ is well-defined by~\eqref{eqnIPW}, we define the isospectral set $\Iso{\sigma}$ as the set of all those pairs $(u,\mu)\in\De$ whose associated spectra coincide with the set $\sigma$.
  It is an immediate consequence of Theorem~\ref{thmIP} that the isospectral set $\Iso{\sigma}$ is in one-to-one correspondence with the set\footnote{To be precise, this has to be interpreted appropriately in the trivial case when $\sigma$ is empty.} 
 \begin{align}\label{eqnLambdasigma}
  \Lambda_\sigma = \left\lbrace \kappa\in\R^\sigma \,\left|\; \sum_{\lambda\in\sigma} \frac{1}{\lambda^2} \frac{\E^{|\kappa_\lambda|}}{|\lambda \dot{W}(\lambda)|}  < \infty \right.\right\rbrace,
 \end{align} 
 by means of the bijection given by  
 \begin{align}\label{eqnIsoSpecTrans}
  (u,\mu) \mapsto \left\lbrace \kappa_\lambda \right\rbrace_{\lambda\in\sigma}.
 \end{align}
   
 Of course, the solution of the inverse spectral problem can easily be formulated in terms of the right/left norming constants as well. 
 However, the picture (that is, the condition corresponding to~\eqref{eqnIPlogccCond} on the asymptotic behavior) looks less symmetric. 

\begin{corollary}\label{corIPNC}
  Let $\sigma$ be a discrete set of nonzero reals such that the limit~\eqref{eqnIPdens} exists in $[0,\infty)$ and such that the entire function $W$ is well-defined by~\eqref{eqnIPW}.
 Moreover, for each $\lambda\in\sigma$ let $\gamma_{\lambda,\pm}^2\in\R$ such that $\lambda\gamma_{\lambda,\pm}^2>0$ and the sums
 \begin{align}
   \sum_{\lambda\in\sigma} \frac{1}{\lambda^2} \frac{1}{\lambda\gamma_{\lambda,\pm}^{2}}, && \sum_{\lambda\in\sigma}  \frac{1}{\lambda^2} \frac{\gamma_{\lambda,\pm}^{2}}{\lambda \dot{W}(\lambda)^2},
 \end{align}
 are finite. 
 Then there is a unique pair $(u,\mu)\in\De$ such that the associated spectrum coincides with $\sigma$ and the right/left norming constants are $\gamma_{\lambda,\pm}^2$ for each $\lambda\in\sigma$. 
\end{corollary}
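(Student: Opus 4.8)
The plan is to deduce Corollary~\ref{corIPNC} from Theorem~\ref{thmIP} by translating the hypotheses on the norming constants $\gamma_{\lambda,\pm}^2$ into the single condition~\eqref{eqnIPlogccCond} on the logarithmic coupling constants $\kappa_\lambda$, and vice versa. The key link is Lemma~\ref{lemWderlam}, which gives $-\dot{W}(\lambda) = c_\lambda^{\pm1}\gamma_{\lambda,\pm}^2$ for each $\lambda\in\sigma$. Recalling that $\kappa_\lambda = \ln|c_\lambda|$, this immediately yields $\E^{\mp\kappa_\lambda} = |c_\lambda|^{\mp1} = \gamma_{\lambda,\pm}^2/|\dot{W}(\lambda)|$ (using $\lambda\gamma_{\lambda,\pm}^2>0$ and the fact that $\gamma_{\lambda,\pm}^2$ and $\dot{W}(\lambda)$ carry comparable signs, so the absolute values are handled cleanly). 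I would record these conversion formulas first, since everything else follows formally from them.

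With the conversion in hand, the existence-and-uniqueness statement reduces to checking that the prescribed data $\{\gamma_{\lambda,\pm}^2\}$ determine, and are determined by, a sequence $\{\kappa_\lambda\}\in\Lambda_\sigma$ satisfying~\eqref{eqnIPlogccCond}. First I would observe that the two summability conditions in the corollary are precisely the statements that $\E^{-\kappa_\lambda}$ and $\E^{+\kappa_\lambda}$, respectively, give convergent weighted sums: writing $\E^{-\kappa_\lambda} = \gamma_{\lambda,+}^2/|\dot{W}(\lambda)|$ turns the first sum $\sum \lambda^{-2}(\lambda\gamma_{\lambda,\pm}^2)^{-1}$ into $\sum \lambda^{-2}\E^{-|\kappa_\lambda|}/|\lambda\dot{W}(\lambda)|$ on the subset where $\kappa_\lambda\geq0$ (and the analogous statement for the second sum on the complementary subset). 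Since $\E^{|\kappa_\lambda|} = \max(\E^{\kappa_\lambda},\E^{-\kappa_\lambda})$, finiteness of~\eqref{eqnIPlogccCond} is equivalent to finiteness of \emph{both} weighted sums appearing in the corollary. Thus the hypotheses of the corollary are equivalent to the hypothesis $\{\kappa_\lambda\}\in\Lambda_\sigma$ of Theorem~\ref{thmIP}, and a direct application of that theorem produces the unique pair $(u,\mu)\in\De$ with spectrum $\sigma$ and the prescribed logarithmic coupling constants.

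It then remains to verify that this pair has exactly the prescribed norming constants. Here I would argue in the reverse direction: for the pair $(u,\mu)$ furnished by Theorem~\ref{thmIP}, the associated norming constants $\tilde\gamma_{\lambda,\pm}^2$ satisfy $-\dot{W}(\lambda)=c_\lambda^{\pm1}\tilde\gamma_{\lambda,\pm}^2$ by Lemma~\ref{lemWderlam}, with the same $W$ (since the spectrum is $\sigma$) and the same coupling constants $c_\lambda$ (since $\kappa_\lambda=\ln|c_\lambda|$ is prescribed and the sign of $c_\lambda$ is recoverable from $\sigma$, as noted after~\eqref{eqnCoup}). Comparing with the defining relation $-\dot{W}(\lambda)=c_\lambda^{\pm1}\gamma_{\lambda,\pm}^2$ forces $\tilde\gamma_{\lambda,\pm}^2=\gamma_{\lambda,\pm}^2$ for every $\lambda\in\sigma$, which gives existence; uniqueness is inherited directly from the uniqueness in Theorem~\ref{thmIP} once one knows that the norming-constant data and the $\kappa_\lambda$ data determine each other.

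The only genuinely delicate point, and the step I expect to require the most care, is the bookkeeping of signs: one must confirm that the sign of $c_\lambda$ is indeed uniquely determined by $\sigma$ together with $\kappa_\lambda$ (this is the claim made after~\eqref{eqnCoup} and must be invoked rather than reproved), and that the condition $\lambda\gamma_{\lambda,\pm}^2>0$ from~\eqref{eqnNormConstH1} is compatible with the sign of $-\dot{W}(\lambda)/c_\lambda^{\pm1}$, so that the passage between $\gamma_{\lambda,\pm}^2$ and $\E^{\mp\kappa_\lambda}$ via $|\dot{W}(\lambda)|$ is self-consistent. Everything else is the formal substitution described above, so once the sign conventions are pinned down the corollary follows at once.
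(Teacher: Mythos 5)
Your proposal is correct and takes exactly the route the paper intends (the paper leaves the proof of this corollary implicit, precisely because it is the translation you describe): via Lemma~\ref{lemWderlam} one has $\E^{\mp\kappa_\lambda}=|\gamma_{\lambda,\pm}^2|/|\dot{W}(\lambda)|$, so the two sums in the corollary become the $\E^{\pm\kappa_\lambda}$-weighted sums $\sum\lambda^{-2}\E^{\pm\kappa_\lambda}/|\lambda\dot{W}(\lambda)|$, whose joint finiteness is equivalent to~\eqref{eqnIPlogccCond}, after which Theorem~\ref{thmIP} gives existence and uniqueness, with the signs of the norming constants pinned down by $\lambda\gamma_{\lambda,\pm}^2>0$. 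Only note a small slip in your middle paragraph: the first sum corresponds to the $\E^{+\kappa_\lambda}$ weight, so on the subset $\{\kappa_\lambda\geq0\}$ it is the $\E^{|\kappa_\lambda|}$-weighted (not $\E^{-|\kappa_\lambda|}$-weighted) sum, and one should write $|\gamma_{\lambda,\pm}^2|$ in the conversion since $\gamma_{\lambda,\pm}^2<0$ when $\lambda<0$; neither affects your final equivalence.
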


In order to avoid misunderstandings, let us point out that this corollary has to be read as two separate statements according to the plus-minus alternative, that is, we are only able to prescribe either the right or the left norming constants. 

\begin{remark}
 If the set $\sigma$ is finite, then all conditions in Theorem~\ref{thmIP} and Corollary~\ref{corIPNC} are trivially satisfied. 
 The solution of the inverse spectral problem in this case has a particular simple form and can be written down explicitly in terms of the spectral data; see~\cite[Section~4]{ConservMP}.  
 We will see next that these kinds of solutions can be used to approximate solutions in the general case in a certain way. 
\end{remark}


 We conclude this section by establishing a continuity property for the inverse spectral transform.   
 In order to state it, let $(u_k,\mu_k)$ belong to $\De$ for every $k\in\N$ and denote all corresponding quantities in an obvious way with an additional subscript. 
 
  \begin{proposition}\label{propCont}
  We have\footnote{Note that the following condition is equivalent to locally uniform convergence $m_{k,\pm}\rightarrow m_\pm$ of the corresponding Weyl--Titchmarsh functions given as in~\eqref{eqnmpmIntRep}.}
  \begin{align}\label{eqnContSD1}
  \sum_{\lambda\in\sigma_k} \frac{1}{\lambda(1+\lambda^2)} \frac{1}{\lambda\gamma_{k,\lambda,\pm}^2} & \rightarrow \sum_{\lambda\in\sigma} \frac{1}{\lambda(1+\lambda^2)} \frac{1}{\lambda\gamma_{\lambda,\pm}^2}, \\ \label{eqnContSD2}
  \sum_{\lambda\in\sigma_k} \frac{\chi(\lambda)}{1+\lambda^2} \frac{1}{\lambda\gamma_{k,\lambda,\pm}^2} & \rightarrow \sum_{\lambda\in\sigma}   \frac{\chi(\lambda)}{1+\lambda^2} \frac{1}{\lambda\gamma_{\lambda,\pm}^2}, 
 \end{align}
   as $k\rightarrow\infty$ for all functions $\chi\in C(\R)$ such that the limit of $\chi(\lambda)$ as $|\lambda|\rightarrow\infty$ exists and is finite if and only if the functions $u_k$ converge to $u$ pointwise and 
  \begin{align}\begin{split}\label{eqnContW}
    & \int_{\pm\infty}^x \E^{\mp s} \left( \int_{\pm\infty}^s \E^{\pm r} \left(u_k'(r)\mp u_k(r)\right)^2 dr + \int_{\pm\infty}^s \E^{\pm r}d\dip_k(r)\right) ds \\ 
     & \qquad\qquad \rightarrow \int_{\pm\infty}^x \E^{\mp s} \left( \int_{\pm\infty}^s \E^{\pm r} \left(u'(r)\mp u(r)\right)^2 dr + \int_{\pm\infty}^s \E^{\pm r}d\dip(r)\right) ds
  \end{split}\end{align}   
   as $k\rightarrow\infty$ for all $x\in\R$.   
   In this case, the functions $u_k$ converge to $u$ even locally uniformly and the Borel measures $\mu_k$ converge to $\mu$ in the sense of distributions. 
  \end{proposition}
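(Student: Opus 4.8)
The plan is to establish the equivalence between the two blocks of conditions through the spectral transform machinery of \cite{IndefiniteString}, which translates convergence of the analytic objects (the functions $u_k$ and the ``second antiderivative'' quantities in~\eqref{eqnContW}) into convergence of the Weyl--Titchmarsh functions, and hence into the weak-type convergence of spectral measures encoded in~\eqref{eqnContSD1} and~\eqref{eqnContSD2}. The key reformulation, noted in the footnote, is that the conjunction of~\eqref{eqnContSD1} and~\eqref{eqnContSD2} is precisely locally uniform convergence $m_{k,\pm}\rightarrow m_\pm$ of the Herglotz--Nevanlinna functions represented in~\eqref{eqnmpmIntRep}; this is because the two families of test functions (those $\chi$ with a finite limit at infinity, together with the weight $\frac{1}{\lambda(1+\lambda^2)}$ versus $\frac{\chi(\lambda)}{1+\lambda^2}$) separate exactly the information carried by the residues $-\frac{1}{\lambda\gamma_{\lambda,\pm}^2}$ of $m_\pm$ at the poles $\lambda\in\sigma$. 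So the first step is to replace the spectral-side hypotheses by the single statement $m_{k,\pm}\rightarrow m_\pm$ locally uniformly on $\C\backslash\R$.

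Next I would invoke the continuity of the inverse spectral transform for indefinite strings, namely \cite[Proposition~6.2]{IndefiniteString} (already used in the existence part of Theorem~\ref{thmIP}), which asserts that locally uniform convergence of the Weyl--Titchmarsh functions $m_{k,\pm}$ is equivalent to convergence of the primitives $\int_0^\xi \Ar_{k,\pm}(s)\,ds$ locally uniformly together with convergence of the measures $\beta_{k,\pm}$ in the appropriate distributional sense. The translation back to the Camassa--Holm coordinates is then routine via the change of variables $\eta_\pm$ and the defining relations~\eqref{eqnDefa},~\eqref{eqnDefbeta}: convergence of $\int_0^\xi \Ar_{k,\pm}$ corresponds, through~\eqref{eqnuitofa}, to pointwise convergence $u_k(x)\rightarrow u(x)$, while the double-integral expressions in~\eqref{eqnContW} are exactly the images under $\eta_\pm$ of the primitives of the string data on the ``$+$'' and ``$-$'' half-lines. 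I would carry out this dictionary on both the $+$ and $-$ sides, noting that the pointwise convergence of $u_k$ need only be extracted once since it is shared by both sides.

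For the final assertion, that pointwise convergence of $u_k$ can be upgraded to locally uniform convergence and that $\mu_k\rightarrow\mu$ in the sense of distributions, I would argue by a compactness and equicontinuity argument: the functions $u_k$ are uniformly bounded in $H^1(\R)$ (via Proposition~\ref{propTrF}, using that the spectral data of the limits are controlled by the convergence hypotheses), hence uniformly Hölder continuous, so pointwise convergence together with this equicontinuity yields locally uniform convergence by Arzel\`a--Ascoli. The distributional convergence of $\mu_k$ then follows from~\eqref{eqnDefdip} once one observes that the $u_k'^2$ part converges (from $H^1$-weak convergence and the identification of the limit) and the $\dip_k$ part converges distributionally, the latter being read off from the convergence of the primitives of $\E^{\pm x}\,d\dip_k$ implicit in~\eqref{eqnContW} after undoing the two integrations.

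The main obstacle I anticipate is the bookkeeping in the ``only if'' direction, that is, showing that convergence of the analytic data~\eqref{eqnContW} forces locally uniform convergence of the $m_{k,\pm}$; one must verify that the doubly-integrated quantities in~\eqref{eqnContW} genuinely control the string coefficients $\Ar_{k,\pm}$ and $\beta_{k,\pm}$ strongly enough to apply the continuity result from \cite{IndefiniteString}, rather than merely some weaker averaged version. The delicate point is that~\eqref{eqnContW} involves a second antiderivative (an outer $\int_{\pm\infty}^x \E^{\mp s}\,ds$ applied to the inner growth integral), so one has to differentiate back twice and check that no information is lost in the process; handling the boundary behavior at $\pm\infty$ correctly, where the exponential weights degenerate, is where the care is needed.
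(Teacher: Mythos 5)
Your skeleton coincides with the paper's: both reduce \eqref{eqnContSD1}--\eqref{eqnContSD2} to the string-side conditions of \cite[Proposition~6.2]{IndefiniteString} and translate through the change of variables $\eta_\pm$. But the two steps you defer are exactly where the content of the proof lies, and your sketches of both would fail as written. Concerning the ``only if'' direction: Proposition~6.2 gives equivalence of the spectral conditions with (a) locally uniform convergence of the primitives $\int_0^\xi \Ar_{k,\pm}(s)\,ds$ and (b) convergence of the \emph{double} primitives of the combined measure $\Ar_{k,\pm}(\xi)^2\,d\xi + d\beta_{k,\pm}(\xi)$ --- not of $\beta_{k,\pm}$ separately, as you state at one point; the two pieces cannot be decoupled. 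Condition (b) is, after a substitution, literally \eqref{eqnContW}, so there is nothing to ``differentiate back twice'' and the obstacle you flag there is misplaced. The genuine gap is condition (a): since $\Ar_{k,\pm}$ involves $u_k'$ through \eqref{eqnDefa}, pointwise convergence of $u_k$ does \emph{not} by itself yield convergence of these primitives. In $x$-coordinates one has, for instance, $\int_0^\xi \Ar_{k,-}(s)\,ds = -u_k(\ln\xi) - \int_{-\infty}^{\ln\xi} u_k(r)\,dr$, so one must pass to the limit in a half-line integral of $u_k$. The paper does this by dominated convergence: the monotone inner integrands in \eqref{eqnContW}, being convergent, are uniformly bounded in $k$, and by Cauchy--Schwarz this yields a $k$-independent, exponentially decaying bound on $u_k$ near the relevant endpoint, which dominates the integral. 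This argument, which is precisely what combines the two hypotheses (pointwise convergence plus \eqref{eqnContW}) into condition (a), is absent from your proposal.

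Concerning the final assertions: your Arzel\`a--Ascoli argument presupposes a uniform $H^1(\R)$ bound, but Proposition~\ref{propTrF} together with \eqref{eqnmuac} only gives $\|u_k\|_{H^1(\R)}^2 \le \mu_k(\R) = \frac{1}{2}\sum_{\lambda\in\sigma_k}\lambda^{-2}$, and nothing in the hypotheses controls these sums directly (here the sets $\sigma_k$ are unrelated to $\sigma$, unlike in Theorem~\ref{thmWeakSol} where $\sigma_k\subseteq\sigma_0$); the paper instead obtains locally uniform convergence of $u_k$ for free, because the convergence in Proposition~6.2 is uniform for bounded $\xi$. More seriously, your route to $\mu_k\to\mu$ splits the measure into a $(u_k')^2$ part and a $\dip_k$ part, and both halves of that splitting are unsound: weak $H^1$ convergence does not give $\int h\,(u_k')^2\,dx \to \int h\,(u')^2\,dx$ (oscillation produces defect measures), and \eqref{eqnContW} only controls the combination $(u_k'\mp u_k)^2\,dx + d\dip_k$, so $\dip_k$ cannot be isolated --- indeed the limit measure $\dip$ may absorb exactly such oscillation defects. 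The paper never separates the two: using $u_k^2+(u_k')^2 = (u_k'\mp u_k)^2 \pm (u_k^2)'$, it writes $\int_\R h\,d\mu_k$ as $\mp\int_\R h'u_k^2\,dx$ plus an integral of $h_\pm''$ against precisely the double primitive appearing in \eqref{eqnContW}, and passes to the limit in that expression.
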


 \begin{proof}
  It follows from \cite[Proposition~6.2]{IndefiniteString} that the first condition on convergence of~\eqref{eqnContSD1} and~\eqref{eqnContSD2} is equivalent to 
  \begin{align}
    \label{eqnContAr} \int_0^\xi \Ar_{k,\pm}(s)ds & \rightarrow \int_0^\xi \Ar_{\pm}(s)ds, \\
    \label{eqnContvs} \int_0^\xi \left(\int_0^s \Ar_{k,\pm}(r)^2 dr + \int_0^s d\beta_{k,\pm}\right) ds & \rightarrow \int_0^\xi \left(\int_0^s \Ar_{\pm}(r)^2 dr + \int_0^s d\beta_{\pm}\right) ds,
  \end{align}
  as $k\rightarrow\infty$ for all $\xi\in\R_\mp$, in which case the convergence is uniform as long as $\xi$ stays bounded. 
  In conjunction with~\eqref{eqnuitofa}, one sees that this implies that the sequence $u_k(x)$ converges to $u(x)$ for all $x\in\R$.
  Moreover, upon employing a substitution in~\eqref{eqnContvs}, we readily infer that~\eqref{eqnContW} holds as $k\rightarrow\infty$ for all $x\in\R$ as well. 
   Both of these convergences are uniform as long as $x$ stays away from $\mp\infty$. 
   In addition, for every smooth function $h$ on $\R$ that vanishes near $\mp\infty$ and such that $h'$ has compact support we have the identity 
  \begin{align*}
   \int_\R h\, d\mu_k & = \int_\R h(x)\alpha_{k,\pm}(x)^2 dx \pm 2 \int_\R h(x) u_k(x)u_k'(x)dx  + \int_\R h\, d\dip_k \\ 
                                & = \int_{\R_\mp} h_\pm(\xi) \Ar_{k,\pm}(\xi)^2 d\xi \mp \int_\R h'(x) u_k(x)^2 dx + \int_{\R_\mp} h_\pm\,  d\beta_{k,\pm}   \\
                                & = \int_{\R_\mp} h_\pm''(\xi) \int_0^\xi \left(\int_0^s \Ar_{k,\pm}(r)^2 dr + \int_0^s d\beta_{k,\pm}\right) ds \, d\xi \mp \int_\R h'(x)u_k(x)^2 dx,
  \end{align*}
  with $h_\pm$ given by~\eqref{eqnDefhpm}. 
  Taking the limit $k\rightarrow\infty$ on the right-hand side gives
  \begin{align}\label{eqnConvmuweak}
   \int_\R h\, d\mu_k \rightarrow \int_\R h\, d\mu, \qquad k\rightarrow\infty,
  \end{align} 
  which shows that the Borel measures $\mu_k$ converge to $\mu$ in the sense of distributions. 
  
 For the converse direction, we now assume that the functions $u_k$ converge to $u$ pointwise and that~\eqref{eqnContW} holds as $k\rightarrow\infty$ for every $x\in\R$. 
  As observed above, the latter condition is equivalent to the fact that~\eqref{eqnContvs} holds as $k\rightarrow\infty$ for all $\xi\in\R_\mp$. 
  Upon expressing the integrals in~\eqref{eqnContAr} in terms of $u_k$ and $u$ by using~\eqref{eqnDefa} as well as a substitution, 
  we infer (use Lebesgue's dominated convergence theorem and note that convergence of~\eqref{eqnContvs} implies a uniform bound on the monotone integrands which yields a sufficient estimate for the functions $u_k$) 
  that~\eqref{eqnContAr} holds as $k\rightarrow\infty$ for all $\xi\in\R_\mp$, which establishes the equivalence in the claim. 
%
 \end{proof}

 We endow $\De$ with the unique first countable topology\footnote{For example, equip the space of Herglotz--Nevanlinna functions with the 
 topology of locally uniform convergence and take the topology inherited from the injection $(u,\mu)\mapsto (m_{+},m_-)$.} such that the sequence $(u_k,\mu_k)$ converges to $(u,\mu)$ if and only if the functions $u_k$ converge to $u$ pointwise and both of the plus-minus alternatives in~\eqref{eqnContW} hold as $k\rightarrow\infty$ for all $x\in\R$. 

\begin{corollary}\label{corCont}
 If the sequence $(u_k,\mu_k)$ converges to $(u,\mu)$ in $\De$, then the functions $u_k$ converge to $u$ uniformly as well as weakly in $H^1(\R)$, the Borel measures $\mu_k$ converge to $\mu$ in the weak$^\ast$ topology\footnote{We regard the space of finite Borel measures on $\R$ as the dual of the Banach space $C_0(\R)$.} and the sequence $\mu_k(\R)$ converges to $\mu(\R)$.
\end{corollary}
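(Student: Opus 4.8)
The plan is to bootstrap the two conclusions already available from Proposition~\ref{propCont} — that $u_k\to u$ locally uniformly and that $\mu_k\to\mu$ in the sense of distributions — into the stronger assertions, using exponential tail estimates that are uniform in $k$. The source of this uniformity is the footnote to Proposition~\ref{propCont}: convergence in $\De$ is equivalent to locally uniform convergence $m_{k,\pm}\to m_\pm$ of the Weyl--Titchmarsh functions. Since the expansion in the proof of Proposition~\ref{propMatZ} identifies
\[
 \dot m_\pm(0) = \int_\R\E^{\pm x}\left(u'(x)\mp u(x)\right)^2 dx + \int_\R\E^{\pm x}\,d\dip(x) =: B_\pm ,
\]
Cauchy's integral formula gives $B_{k,\pm}\to B_\pm$; in particular the numbers $B_{k,\pm}$ are bounded uniformly in $k$, and this single bound drives everything.

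First I would upgrade local uniform convergence of $u_k$ to genuine uniform convergence. Representing $u_k$ near $\pm\infty$ through $\alpha_{k,\pm}$ exactly as in the proof of Theorem~\ref{thmThetaPhi} and applying the Cauchy--Schwarz inequality gives the pointwise decay bound $|u_k(x)|^2\leq\tfrac{1}{3}\E^{\mp x}B_{k,\pm}$ as $x\to\pm\infty$. Because $B_{k,\pm}$ is bounded, this forces $\sup_k|u_k(x)|\to0$ as $|x|\to\infty$, so the tails of $u_k$ are uniformly small; combined with the local uniform convergence, the usual splitting of $\R$ into a large compact set and its complement yields uniform convergence on all of $\R$.

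Second, the same exponential weights give tightness of the measures $\mu_k$. For $x>R>0$ one has $\E^{R}\dip_k((R,\infty))\leq B_{k,+}$ and $\int_R^\infty(u_k'-u_k)^2\,dx\leq\E^{-R}B_{k,+}$; together with $|u_k(x)|^2\leq\tfrac13\E^{-x}B_{k,+}$ and the elementary inequality $u_k'^2\leq 2(u_k'-u_k)^2+2u_k^2$, this yields $\mu_k((R,\infty))\leq C\E^{-R}B_{k,+}$, and symmetrically $\mu_k((-\infty,-R))\leq C\E^{-R}B_{k,-}$. Hence $\sup_k\mu_k(\R\setminus[-R,R])\to0$ as $R\to\infty$. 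Distributional convergence together with this tightness upgrades at once to weak$^\ast$ convergence against $C_0(\R)$ (approximate a $C_0$ function by compactly supported ones, using the uniform total mass bound) and, testing against a cutoff equal to one on $[-R,R]$, also gives $\mu_k(\R)\to\mu(\R)$. Finally, \eqref{eqnmuac} gives $\|u_k\|_{H^1}^2\leq\mu_k(\R)$, so $u_k$ is bounded in $H^1(\R)$; since it already converges uniformly, hence in $\mathcal{D}'(\R)$, to $u$, every weak-$H^1$ limit point of a subsequence must equal $u$, and boundedness then forces $u_k\rightharpoonup u$ weakly in $H^1(\R)$.

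The main obstacle is the tightness estimate of the third step: the quantity $B_{k,\pm}$ controls only the exponentially weighted integral of $(u_k'\mp u_k)^2$ together with $\dip_k$, so controlling the genuine $H^1$-tails $\int u_k^2+u_k'^2$ on the far side requires combining this weighted bound with the uniform pointwise decay of $u_k$ obtained in the second step. It is the interplay of these two ingredients — rather than either in isolation — that closes the argument and simultaneously delivers the convergence of the total masses.
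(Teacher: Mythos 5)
Your strategy reduces everything to the uniform bound $\sup_k B_{k,\pm}<\infty$, and granting that bound, your steps on uniform convergence, tightness, weak$^\ast$ convergence and weak $H^1$ convergence would all go through. But the way you obtain the bound is not valid, and this is a genuine gap. The footnote to Proposition~\ref{propCont} asserts locally uniform convergence $m_{k,\pm}\rightarrow m_\pm$ only on $\C\backslash\R$. Cauchy's integral formula cannot be applied at $z=0$: every circle centered at $0$ meets the real axis, where no uniform convergence is available, since $0$ is a boundary point of the region of convergence. For Herglotz functions the implication you invoke is in fact false in general: locally uniform convergence on $\C\backslash\R$ only yields the one-sided bound $\liminf_k \dot{m}_{k,\pm}(0)\geq\dot{m}_\pm(0)$ (for instance from $\dot{m}_{k,\pm}(0)\geq \im\, m_{k,\pm}(\I y)/y$), while a pole drifting to the origin with suitably scaled weight (a term $c_k z/(\epsilon_k(\epsilon_k-z))$ with $c_k/\epsilon_k\rightarrow 0$ but $c_k/\epsilon_k^2\rightarrow\infty$) makes the derivative at zero blow up even though the functions converge locally uniformly away from $\R$. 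To exclude this within your framework one needs the spectra $\sigma_k$ to stay uniformly away from zero; the natural source of such a gap is the trace formula of Proposition~\ref{propTrF}, $\sum_{\lambda\in\sigma_k}\lambda^{-2}=2\mu_k(\R)$, together with boundedness of $\mu_k(\R)$ --- which is precisely part of what you are proving, so the argument becomes circular. (Indeed, the paper derives the uniform spectral gap \emph{from} Corollary~\ref{corCont} in the proof of Proposition~\ref{propFlowCont}.) The bound on $B_{k,\pm}$ is actually true under the hypothesis, but establishing it requires working directly with the two weighted conditions~\eqref{eqnContW} for both signs (the $+$ condition controls mass drifting to $+\infty$, the $-$ condition mass drifting to $-\infty$, and the $u$-terms need the uniform boundedness of $u_k$); that interplay, which you gesture at only for your third step, is the real obstacle and is nowhere supplied.

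For contrast, the paper's proof bypasses weighted tail estimates entirely. Uniform convergence of $u_k$ on all of $\R$ is immediate because the proof of Proposition~\ref{propCont} gives convergence that is uniform away from $-\infty$ (from the $+$ alternative) and uniform away from $+\infty$ (from the $-$ alternative), and the hypothesis of the corollary includes both alternatives. Convergence of the total masses follows because the distributional convergence~\eqref{eqnConvmuweak} was established there for test functions $h$ that merely vanish near $\mp\infty$ and have compactly supported derivative --- in particular for functions equal to one near $\pm\infty$ --- so a partition of unity into two such functions gives $\mu_k(\R)\rightarrow\mu(\R)$ at once; testing against translates of these functions also yields tightness, hence weak$^\ast$ convergence, and boundedness of $\mu_k(\R)$ combined with~\eqref{eqnmuac} gives the weak $H^1$ statement exactly as in your final step.
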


\begin{proof}
  We have seen in the first part of the proof of Proposition~\ref{propCont} that the given assumption implies that the functions $u_k$ converge to $u$ uniformly.
  After choosing a suitable partition of unity (two functions are sufficient) we infer from~\eqref{eqnConvmuweak} that the sequence $\mu_k(\R)$ converges to $\mu(\R)$. 
  In particular, the sequence $\mu_k(\R)$  is bounded which allows to deduce the remaining claims in a straightforward manner. 
\end{proof}

\section{The conservative Camassa--Holm flow}\label{secCCH}

 Let us define {\em the conservative Camassa--Holm flow} $\Phi$ on $\De$  as a mapping
 \begin{align}
   \Phi:  \De\times\R \rightarrow\De
 \end{align}
 in the following way:
 Given a pair $(u,\mu)\in\De$ with associated spectrum $\sigma$ and logarithmic coupling constants $\kappa_\lambda$ for every $\lambda\in\sigma$, as well as a $t\in\R$, the corresponding image $\Phi^t(u,\mu)$ under $\Phi$ is defined as the unique pair in $\De$ (guaranteed to exist by Theorem~\ref{thmIP}) for which the associated spectrum coincides with $\sigma$ and the logarithmic coupling constants are   
   \begin{align}\label{eqnKappaEvo}
    \kappa_\lambda + \frac{t}{2\lambda}, \quad  \lambda\in\sigma.
  \end{align}
 Of course, this definition is motivated by the well-known simple time evolution of spectral data for classical solutions of the Camassa--Holm equation, which constitutes the essence of our method of solution and is due to the underlying completely integrable structure; see \cite[Section~6]{besasz98}.
 
  It is clear that the isospectral sets $\Iso{\sigma}$ are invariant under the conservative Camassa--Holm flow. 
  Moreover, the bijection in~\eqref{eqnIsoSpecTrans} takes the conservative Camassa--Holm flow on $\Iso{\sigma}$ to a simple linear flow on $\Lambda_\sigma$ whose solutions are explicitly given by~\eqref{eqnKappaEvo}.  
 In conjunction with these facts, the trace formulas in Proposition~\ref{propTrF} give rise to conserved quantities for the flow; cf.\ \cite{fisc99, le05}.
    
 \begin{proposition}
  The two functionals 
 \begin{align}\label{eqnCQ}
    (u,\mu) & \mapsto \int_\R u(x) dx, & (u,\mu) & \mapsto \int_\R d\mu, 
 \end{align}
 on $\De$ are invariant under the conservative Camassa--Holm flow.
 \end{proposition}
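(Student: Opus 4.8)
The plan is to show that each of the two functionals in~\eqref{eqnCQ} is constant along every integral curve $t\mapsto\Phi^t(u,\mu)$. The key observation is that the conservative Camassa--Holm flow is, by its very definition, isospectral: for every $(u,\mu)\in\De$ and every $t\in\R$, the spectrum associated with $\Phi^t(u,\mu)$ coincides with the spectrum $\sigma$ associated with $(u,\mu)$, since the flow only alters the logarithmic coupling constants via~\eqref{eqnKappaEvo} while leaving $\sigma$ fixed. Thus any quantity that depends on the pair $(u,\mu)$ only through its spectrum $\sigma$ is automatically conserved, and the strategy is simply to exhibit both functionals as such spectral quantities.

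The mechanism for doing this is provided by the trace formulas in Proposition~\ref{propTrF}. First I would invoke the first trace formula, which gives
\begin{align*}
 \int_\R u(x)\,dx = \csum_{\lambda\in\sigma} \frac{1}{\lambda}.
\end{align*}
Since the right-hand side is manifestly determined by the set $\sigma$ alone, and $\sigma$ is preserved under $\Phi^t$, the functional $(u,\mu)\mapsto\int_\R u\,dx$ takes the same value at $\Phi^t(u,\mu)$ as at $(u,\mu)$ for all $t$. In exactly the same way, the second trace formula yields
\begin{align*}
 \int_\R d\mu = \frac{1}{2}\sum_{\lambda\in\sigma} \frac{1}{\lambda^2},
\end{align*}
whose right-hand side is again a function of $\sigma$ only and hence invariant under the flow. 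This establishes invariance of both functionals.

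There is essentially no analytic obstacle here; the content lies entirely in recognizing that the two conserved quantities are precisely the ones read off from $W$ at zero via~\eqref{eqncprodatzero}, and that the defining feature of the flow is the invariance of $\sigma$ (equivalently, of the entire function $W$ and its product representation~\eqref{eqnWprodrep}). The only point requiring a word of care is to confirm that $\Phi^t$ genuinely fixes $\sigma$: this is immediate from the definition, because $\Phi^t(u,\mu)$ is characterized by Theorem~\ref{thmIP} as the unique pair in $\De$ with associated spectrum $\sigma$ and shifted logarithmic coupling constants, so the spectrum is unchanged by construction. Given this, the proof reduces to citing Proposition~\ref{propTrF} twice.
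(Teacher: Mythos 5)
Your proposal is correct and coincides with the paper's own reasoning: the flow is isospectral by construction (it fixes $\sigma$ and only shifts the $\kappa_\lambda$), and the trace formulas of Proposition~\ref{propTrF} express both functionals purely in terms of $\sigma$, hence they are conserved. This is exactly the argument the paper gives in the paragraph preceding the proposition.
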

 
 Since the Wronskian $W$ is uniquely determined by the spectrum $\sigma$, it is invariant under the conservative Camassa--Holm flow. 
 Therefore, we see that the functional  
  \begin{align}
   (u,\mu) \mapsto \int_\R \rho(x)dx
  \end{align}
  on $\De$ is invariant as well (recall the notation from Remark~\ref{remETW}). 
  In particular, this shows that the property of $\rho$ vanishing almost everywhere (or equivalent, the Borel measure $\dip$ being singular with respect to the Lebesgue measure) is preserved. 
  This case corresponds to the subclass of global conservative solutions of the Camassa--Holm equation in \cite{brco07, hora07}, whereas the function $\rho$ is present for the general class of global conservative solutions of the two-component Camassa--Holm system in \cite{grhora12}.  

 \begin{proposition}\label{propFlowCont}
  The conservative Camassa--Holm flow $\Phi$ is continuous.  
 \end{proposition}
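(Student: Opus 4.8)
The plan is to reduce everything to the convergence of spectral data furnished by Proposition~\ref{propCont} and to exploit that the flow acts on this data by an explicit multiplier. Since the topology on $\De$ is first countable and $\R$ is metrizable, the product $\De\times\R$ is first countable, so it suffices to prove sequential continuity. Thus I would fix a sequence $(u_k,\mu_k)\to(u,\mu)$ in $\De$ together with $t_k\to t$ in $\R$, and aim to show $\Phi^{t_k}(u_k,\mu_k)\to\Phi^t(u,\mu)$ in $\De$ by verifying the two convergences~\eqref{eqnContSD1} and~\eqref{eqnContSD2} for the corresponding flowed spectral data. By Proposition~\ref{propCont} the convergence $(u_k,\mu_k)\to(u,\mu)$ is itself equivalent to~\eqref{eqnContSD1} and~\eqref{eqnContSD2}, so these convergences are available as input and are exactly what must be re-established after flowing.

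The key observation is that the definition~\eqref{eqnKappaEvo} of the flow translates, via~\eqref{eqnmpmIntRep} and Lemma~\ref{lemWderlam}, into multiplication of each spectral weight $\frac{1}{\lambda\gamma_{\lambda,\pm}^2}$ by the factor $\E^{\pm t/(2\lambda)}$: the Weyl--Titchmarsh functions of $\Phi^t(u,\mu)$ arise from those of $(u,\mu)$ by replacing the residue weight at each $\lambda\in\sigma$ with $\E^{\pm t/(2\lambda)}\frac{1}{\lambda\gamma_{\lambda,\pm}^2}$, and likewise for $\Phi^{t_k}(u_k,\mu_k)$ with $t$ replaced by $t_k$ and $\sigma$ by $\sigma_k$. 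Interpreting the weighted point masses $\sum_{\lambda\in\sigma_k}\frac{1}{1+\lambda^2}\frac{1}{\lambda\gamma_{k,\lambda,\pm}^2}\delta_\lambda$ as finite Borel measures, the input~\eqref{eqnContSD1} and~\eqref{eqnContSD2} says precisely that these measures converge in the weak$^\ast$ sense, tested against $\frac{1}{\lambda}$ and against all continuous $\chi$ possessing a finite limit at infinity; taking $\chi\equiv1$ in~\eqref{eqnContSD2} shows their total masses are bounded uniformly in $k$.

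The crucial point making the multiplier $\E^{\pm t/(2\lambda)}$ admissible is that the spectra stay uniformly away from the origin. Indeed, the spectrum is invariant under the flow, so $\Phi^{t_k}(u_k,\mu_k)$ again has spectrum $\sigma_k$; by the second trace formula in Proposition~\ref{propTrF} we have $\frac{1}{2}\sum_{\lambda\in\sigma_k}\frac{1}{\lambda^2}=\mu_k(\R)$, and Corollary~\ref{corCont} gives $\mu_k(\R)\to\mu(\R)$, so this sum is bounded uniformly in $k$. Hence there is a $\delta>0$ with $|\lambda|\geq\delta$ for every $\lambda\in\sigma_k$ and every $k$. On $\{|\lambda|\geq\delta\}$ the map $\lambda\mapsto\E^{\pm t/(2\lambda)}$ is bounded and continuous, tends to $1$ as $|\lambda|\to\infty$, and $\E^{\pm t_k/(2\lambda)}\to\E^{\pm t/(2\lambda)}$ uniformly as $t_k\to t$. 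Therefore the flowed test functions $\chi(\lambda)\E^{\pm t_k/(2\lambda)}$ and $\frac{1}{\lambda}\E^{\pm t_k/(2\lambda)}$ converge uniformly to admissible limiting test functions, and pairing uniformly convergent test functions against weak$^\ast$ convergent measures of uniformly bounded mass yields~\eqref{eqnContSD1} and~\eqref{eqnContSD2} for the flowed data with the expected limits. The converse direction of Proposition~\ref{propCont} then gives $\Phi^{t_k}(u_k,\mu_k)\to\Phi^t(u,\mu)$ in $\De$.

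I expect the main obstacle to be exactly the control near the origin: a priori the multiplier $\E^{\pm t/(2\lambda)}$ is unbounded and discontinuous at $\lambda=0$, so without the uniform spectral gap $\delta$ the perturbed spectral measures could fail to depend continuously on the data. The trace-formula argument combined with Corollary~\ref{corCont} is what removes this difficulty; once the gap is secured, the remainder is a routine weak$^\ast$ convergence estimate, requiring only mild bookkeeping to treat the two test-function families in~\eqref{eqnContSD1}--\eqref{eqnContSD2} and the two sign alternatives.
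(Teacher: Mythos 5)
Your proposal is correct and follows essentially the same route as the paper's proof: both reduce to the spectral-data convergence criterion of Proposition~\ref{propCont}, secure a uniform spectral gap via the second trace formula in Proposition~\ref{propTrF} together with Corollary~\ref{corCont}, and then handle the multiplier $\E^{\pm t_k/(2\lambda)}$ on $\lbrace|\lambda|\geq\varepsilon\rbrace$ by a uniform-convergence estimate against the (uniformly bounded) spectral sums. The only difference is presentational: the paper carries out the explicit triangle-inequality estimate with the auxiliary test functions $\chi_\pm$, while you phrase the same step as pairing weak$^\ast$ convergent measures of bounded mass with uniformly convergent test functions.
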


 \begin{proof}
  Let $t$, $t_k\in\R$ for every $k\in\N$ such that $t_k\to t$ as $k\to\infty$ and suppose that the sequence $(u_k,\mu_k)$ converges to $(u,\mu)$ in $\De$ so that~\eqref{eqnContSD2} holds as $k\to\infty$ for all functions $\chi\in C(\R)$ such that the limit of $\chi(\lambda)$ as $|\lambda|\rightarrow\infty$ exists and is finite. 
  In conjunction with Proposition~\ref{propTrF}, we infer from Corollary~\ref{corCont} that there is an $\varepsilon>0$ such that the intersections $(-\varepsilon,\varepsilon)\cap\sigma$ and $(-\varepsilon,\varepsilon)\cap\sigma_k$ are empty for every $k\in\N$.
 Given any continuous function $\tau$ on $\R\backslash\lbrace0\rbrace$ such that the limit of $\tau(\lambda)$ as $|\lambda|\rightarrow\infty$ exists and is finite, we choose a function $\chi_\pm\in C(\R)$ such that 
  \begin{align*}
  \chi_{\pm}(\lambda) =  \tau(\lambda) \E^{\pm\frac{t}{2\lambda}}, \quad |\lambda|\geq \varepsilon,  
 \end{align*}
 as well as constants $K$, $T\in\R$ such that $|\tau(\lambda)|\leq K$ for all $\lambda\in\R$ with $|\lambda|\geq\varepsilon$ and such that $|t_k|\leq T$ for all $k\in\N$. 
 We then may estimate 
     \begin{align*} 
  & \left|  \sum_{\lambda\in\sigma_k} \frac{\tau(\lambda)}{1+\lambda^2} \frac{1}{\lambda\gamma_{k,\lambda,\pm}^2 \E^{\mp\frac{t_k}{2\lambda}} }- \sum_{\lambda\in\sigma}   \frac{\tau(\lambda)}{1+\lambda^2} \frac{1}{\lambda\gamma_{\lambda,\pm}^2  \E^{\mp\frac{t}{2\lambda}}} \right| \\
   & \quad \leq   \frac{K}{\varepsilon} \E^{\frac{T}{\varepsilon}} |t_k-t| \sum_{\lambda\in\sigma_k} \frac{1}{1+\lambda^2} \frac{1}{\lambda\gamma_{k,\lambda,\pm}^2}  + \left|\sum_{\lambda\in\sigma_k} \frac{\chi_\pm(\lambda)}{1+\lambda^2} \frac{1}{\lambda\gamma_{k,\lambda,\pm}^2} - \sum_{\lambda\in\sigma} \frac{\chi_\pm(\lambda)}{1+\lambda^2} \frac{1}{\lambda\gamma_{\lambda,\pm}^2} \right|
 \end{align*}
 for every $k\in\N$.
 Since it follows readily from our assumptions that the right-hand side always converges to zero as $k\rightarrow\infty$, we infer that  
    \begin{align*}
  \sum_{\lambda\in\sigma_k} \frac{1}{\lambda(1+\lambda^2)} \frac{1}{\lambda\gamma_{k,\lambda,\pm}^2 \E^{\mp\frac{t_k}{2\lambda}}}  & \rightarrow \sum_{\lambda\in\sigma} \frac{1}{\lambda(1+\lambda^2)} \frac{1}{\lambda\gamma_{\lambda,\pm}^2 \E^{\mp\frac{t}{2\lambda}}}, \\ 
  \sum_{\lambda\in\sigma_k} \frac{\chi(\lambda)}{1+\lambda^2} \frac{1}{\lambda\gamma_{k,\lambda,\pm}^2 \E^{\mp\frac{t_k}{2\lambda}}} & \rightarrow \sum_{\lambda\in\sigma}   \frac{\chi(\lambda)}{1+\lambda^2} \frac{1}{\lambda\gamma_{\lambda,\pm}^2 \E^{\mp\frac{t}{2\lambda}}}, 
 \end{align*}
   as $k\rightarrow\infty$ for all functions $\chi\in C(\R)$ such that the limit of $\chi(\lambda)$ as $|\lambda|\rightarrow\infty$ exists and is finite.  
   In view of Proposition~\ref{propCont} and the definition of the flow $\Phi$, this implies that the corresponding images $\Phi^{t_k}(u_k,\mu_k)$ converge to $\Phi^t(u,\mu)$. 
 \end{proof}

 For the global conservative solutions of the two-component Camassa--Holm system in \cite{grhora12}, continuity results similar to Proposition~\ref{propFlowCont} were obtained in \cite[Theorem~5.2]{grhora12} and \cite[Theorem~6.7]{grhora14}. 
 However, the topologies used in \cite{grhora12, grhora14} are defined in a much more intricate and less explicit way; cf.\ \cite[Lemma~6.4 and Lemma~6.5]{grhora12}. 
  
 As the main result of this section, we are now going to show that the integral curve $t\mapsto\Phi^t(u_0,\mu_0)$ for any fixed initial data $(u_0,\mu_0)\in\De$ defines a weak solution of  the two-component Camassa--Holm system~\eqref{eqnmy2CH}. 
 To this end, let us denote the pair $\Phi^t(u_0,\mu_0)$ with $(u(\ledot,t),\mu(\ledot,t))$ for every $t\in\R$, so that $u$ can be regarded as a function on $\R\times\R$.
 Note that the integral curve $t\mapsto\Phi^t(u_0,\mu_0)$ is continuous by Proposition~\ref{propFlowCont}, which guarantees that the function $u$ is at least continuous. 
 
\begin{theorem}\label{thmWeakSol}
 The pair $(u,\mu)$ is a weak solution of the two-component Camassa--Holm system~\eqref{eqnmy2CH} in the sense that for every test function $\varphi\in C_\cc^\infty(\R^2)$  we have 
 \begin{align}
 & \int_\R \int_\R u(x,t) \varphi_t(x,t) + \left(\frac{u(x,t)^2}{2} + P(x,t) \right) \varphi_x(x,t) \,dx \,dt = 0, \\
 \begin{split} 
 &  \int_\R \int_\R \varphi_t(x,t) + u(x,t) \varphi_x(x,t) \,d\mu(x,t) \,dt  \\ 
 &   \qquad\qquad\qquad\qquad = 2\int_\R \int_\R u(x,t)\left(\frac{u(x,t)^2}{2} - P(x,t) \right) \varphi_x(x,t) \,dx \,dt,
 \end{split}
 \end{align}
 where the function $P$ is given by 
 \begin{align}
  P(x,t) =  \frac{1}{4} \int_\R \E^{-|x-s|} u(s,t)^2 ds +  \frac{1}{4} \int_\R \E^{-|x-s|} d\mu(s,t), \quad x,\, t\in\R.
 \end{align}
\end{theorem}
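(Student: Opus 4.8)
The plan is to reduce the statement to the explicitly solvable finite-dimensional case and then transfer it to general data by the continuity and convergence results already at our disposal. I would fix initial data $(u_0,\mu_0)\in\De$ with spectrum $\sigma$ and logarithmic coupling constants $\kappa_\lambda$, and approximate it by the pairs $(u_{0,k},\mu_{0,k})\in\De$ whose spectrum is the truncation $\sigma_k=\lbrace\lambda\in\sigma:|\lambda|\leq k\rbrace$ and whose coupling constants are the corresponding $\kappa_\lambda$; this is precisely the finite-spectrum approximation already employed in the existence part of the proof of Theorem~\ref{thmIP}. Since the associated Weyl--Titchmarsh functions $m_{k,\pm}$ are the truncations of the series in~\eqref{eqnmpmIntRep} and hence converge locally uniformly to $m_\pm$, Proposition~\ref{propCont} gives $(u_{0,k},\mu_{0,k})\to(u_0,\mu_0)$ in $\De$, and continuity of the flow (Proposition~\ref{propFlowCont}) then yields $\Phi^t(u_{0,k},\mu_{0,k})\to\Phi^t(u_0,\mu_0)$ in $\De$ for every fixed $t$.

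The base case is that each integral curve $t\mapsto\Phi^t(u_{0,k},\mu_{0,k})$ solves the system in the required weak sense. Here $\sigma_k$ is finite, so by the remark following Theorem~\ref{thmIP} the corresponding pair is one of the explicit conservative multipeakons treated in \cite{ConservMP}: under the inverse transform the linear evolution $\kappa_\lambda\mapsto\kappa_\lambda+t/(2\lambda)$ of the logarithmic coupling constants translates into the explicit ODE system governing the peakon positions and amplitudes, and the resulting finite combination of peakons together with the accompanying measure is known to satisfy both weak identities. This reduces the theorem to a stability statement under the convergence from the first paragraph.

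For the limiting step I would write the flowed quantities with a subscript $k$ and pass to the limit in each of the two weak identities against a fixed $\varphi\in C_\cc^\infty(\R^2)$, whose support lies in some $\R\times[-T,T]$. By Corollary~\ref{corCont}, for each $t$ the functions $u_k(\redot,t)$ converge to $u(\redot,t)$ uniformly and the measures $\mu_k(\redot,t)$ converge to $\mu(\redot,t)$ in the weak$^\ast$ topology; moreover the functional $\int_\R d\mu$ is conserved along the flow by Proposition~\ref{propTrF}, so the total masses stay bounded uniformly in $k$ and $t$, whence $\|u_k(\redot,t)\|_{H^1}^2\leq\int_\R d\mu_k(\redot,t)$ is uniformly bounded as well. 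Uniform convergence of $u_k$ forces $u_k^2\to u^2$ and, through the convolution representation, $P_k\to P$ locally uniformly, so every integrand in the first identity and in the right-hand side of the second converges uniformly while the mass bound provides domination; dominated convergence in $t$ then yields those identities. For the left-hand side $\int_\R\int_\R(\varphi_t+u_k\varphi_x)\,d\mu_k\,dt$ the spatial integrand converges uniformly to $\varphi_t+u\varphi_x$, so weak$^\ast$ convergence of $\mu_k(\redot,t)$ handles the inner integral for each $t$, and the uniform mass bound again justifies passing the outer $t$-integral to the limit.

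The main obstacle is twofold. First one must secure the base case with genuine rigor, verifying the weak formulation for the conservative multipeakons including the second, measure-valued equation with its possibly singular part, which is where the two-component structure and the precise definition of $P$ enter. Second, in the limit one must justify interchanging the limit with the $t$-integration, since continuity of the flow only supplies convergence for each fixed $t$; the crucial ingredient here is the conservation law of Proposition~\ref{propTrF}, which bounds $\int_\R d\mu_k(\redot,t)$ and hence $\|u_k(\redot,t)\|_{H^1}$ uniformly in both $k$ and $t$ and thereby delivers the domination needed for the $dt$-integral.
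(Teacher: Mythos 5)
Your strategy coincides with the paper's own proof: approximate the initial data by finite-spectrum pairs, quote \cite{ConservMP} for the weak formulation in the multipeakon case, and pass to the limit for each fixed $t$ using Proposition~\ref{propCont}/Corollary~\ref{corCont} together with the uniform bound $u_k(x,t)^2\le\mu_k(\R,t)\le\frac{1}{2}\sum_{\lambda\in\sigma_0}\lambda^{-2}$ furnished by the trace formulas, followed by dominated convergence in $t$. That limiting argument is the paper's almost verbatim, and your use of Proposition~\ref{propFlowCont} (with constant times) in place of a direct appeal to Proposition~\ref{propCont} is an inessential variation.

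There is, however, one concrete flaw in how you build the approximants. You truncate the spectrum to $\sigma_0\cap[-k,k]$ while keeping the logarithmic coupling constants $\kappa_\lambda$, and you then claim that the associated Weyl--Titchmarsh functions are the truncations of the series in~\eqref{eqnmpmIntRep}. These two statements are incompatible. By Lemma~\ref{lemWderlam}, the weight attached to $\lambda$ in~\eqref{eqnmpmIntRep} is $(\lambda\gamma_{\lambda,\pm}^2)^{-1}=\E^{\mp\kappa_\lambda}|\lambda\dot{W}(\lambda)|^{-1}$, and $\dot{W}$ is not stable under truncation: writing $W=W_kR_k$ with $R_k(z)=\cprod_{|\mu|>k}\bigl(1-\frac{z}{\mu}\bigr)$, one finds $\dot{W}_k(\lambda)=\dot{W}(\lambda)/R_k(\lambda)$ for the truncated Wronskian. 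Hence keeping $\kappa_\lambda$ fixed multiplies each weight by the factor $|R_k(\lambda)|$; for $|\lambda|$ comparable to the truncation level $k$ this factor need not be close to one, nor even uniformly bounded in $k$ (for instance when the positive and negative parts of $\sigma_0$ have very different densities), so the locally uniform convergence $m_{k,\pm}\to m_\pm$ that you feed into Proposition~\ref{propCont} is unjustified as stated. The paper sidesteps this by truncating the spectrum and keeping the \emph{norming constants} $\gamma_{0,\lambda,\pm}^2$ fixed, i.e.\ by defining the approximants directly through the truncated series for $m_\pm$ (convergent by Proposition~\ref{propMatZ}), so that the required convergence is simply that of the partial sums of a convergent series; moreover, since the flow multiplies the norming constants by $\E^{\mp t/(2\lambda)}$ and the spectra stay uniformly bounded away from zero, the same remains true at every time $t$. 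With that single modification --- prescribe $\gamma_{0,\lambda,\pm}^2$ as in Corollary~\ref{corIPNC} instead of $\kappa_\lambda$ --- your argument goes through and reproduces the paper's proof.
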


\begin{proof}
 Let us denote all the quantities corresponding to $(u_0,\mu_0)$ in an obvious way with an additional subscript. 
 If the spectrum $\sigma_0$ is a finite set, then it follows from \cite[Section~5]{ConservMP}  that the pair $(u,\mu)$ is a weak solution of the two-component Camassa--Holm system~\eqref{eqnmy2CH}.
 Otherwise, we define a pair $(u_{k,0},\mu_{k,0})\in\De$ for every $k\in\N$ in such a way that the associated spectrum coincides with $\sigma_0\cap[-k,k]$ and the right/left norming constants are $\gamma_{0,\lambda,\pm}^2$ for every $\lambda\in\sigma_0\cap[-k,k]$. 
 Since the intersection $\sigma_0\cap[-k,k]$ is a finite set, the corresponding pairs $(u_k,\mu_k)$ obtained from the integral curves $t\mapsto \Phi^t(u_{k,0},\mu_{k,0})$ satisfy   
 \begin{align}\label{eqnWSk1}
 & \int_\R \int_\R u_k(x,t) \varphi_t(x,t) + \left(\frac{u_k(x,t)^2}{2} + P_k(x,t) \right) \varphi_x(x,t) \,dx \,dt = 0, \\
 \begin{split} \label{eqnWSk2}
 &  \int_\R \int_\R \varphi_t(x,t) + u_k(x,t) \varphi_x(x,t) \,d\mu_k(x,t) \,dt  \\ 
 &   \qquad\qquad\qquad\qquad = 2\int_\R \int_\R u_k(x,t)\left(\frac{u_k(x,t)^2}{2} - P_k(x,t) \right) \varphi_x(x,t) \,dx \,dt,
 \end{split}
 \end{align}
 for every test function $\varphi\in C_\cc^\infty(\R^2)$, where the function $P_k$ is given by 
 \begin{align*}
  P_k(x,t) =  \frac{1}{4} \int_\R \E^{-|x-s|} u_k(s,t)^2 ds +  \frac{1}{4} \int_\R \E^{-|x-s|} d\mu_k(s,t), \quad x,\, t\in\R.
 \end{align*}
 Moreover, our definitions and Proposition~\ref{propTrF} guarantee the bounds
 \begin{align*}
  u_k(x,t)^2 \leq \mu_k(\R,t) \leq \frac{1}{2} \sum_{\lambda\in\sigma_0}\frac{1}{\lambda^2}, \quad x,\,t\in\R,~k\in\N,
 \end{align*}
 and from Proposition~\ref{propCont} we infer that the functions $u_k(\ledot,t)$ converge to $u(\ledot,t)$ locally uniformly and that the Borel measures $\mu_k(\ledot,t)$ converge to $\mu(\ledot,t)$ in the weak$^\ast$ topology (to conclude this, we also used that the sequence $\mu_k(\R,t)$ is bounded)  for every fixed $t\in\R$.
 In particular, this shows that the sequence of functions $P_k$ is bounded as well and converges to $P$ at least pointwise. 
 Finally, upon passing to the limit $k\rightarrow\infty$ in~\eqref{eqnWSk1} and~\eqref{eqnWSk2}, we infer that the pair $(u,\mu)$ is a weak solution of the two-component Camassa--Holm system~\eqref{eqnmy2CH}. 
\end{proof}

 When the Borel measure $\dip_0$ corresponding to the pair $(u_0,\mu_0)$ vanishes identically and the distribution $\omega_0$ is non-negative/non-positive, then this property is preserved by the conservative Camassa--Holm flow in view of Proposition~\ref{propDefinite}; cf.\ \cite[Corollary~3.3]{coes98}. 
 In this case, the function $u$ is a weak solution of the Camassa--Holm equation~\eqref{eqnCH} in the sense that for every test function $\varphi\in C_\cc^\infty(\R^2)$  we have 
 \begin{align}
  \int_\R \int_\R u(x,t) \varphi_t(x,t) + \left(\frac{u(x,t)^2}{2} + P(x,t) \right) \varphi_x(x,t) \,dx \,dt = 0,
 \end{align}
 where the function $P$ is given by 
 \begin{align}
  P(x,t) =  \frac{1}{2} \int_\R \E^{-|x-s|} u(s,t)^2 ds + \frac{1}{4} \int_\R \E^{-|x-s|} u_x(s,t)^2 ds, \quad x,\, t\in\R. 
 \end{align}
 Under these assumptions, global weak solutions in this sense have been obtained before in \cite{coes98c, como00}. 
 In the more challenging general case, existence of global weak solutions as in Theorem~\ref{thmWeakSol} has been established in \cite{brco07, hora07, grhora12} by means of an elaborate transformation to Lagrangian coordinates. 

 \begin{remark}
  If the spectrum $\sigma_0$ associated with the pair $(u_0,\mu_0)$ is a finite set, then we recover the special class of conservative multi-peakon solutions \cite{hora07a}, which can be written down explicitly in terms of the spectral data \cite{ConservMP}. 
  We have seen in the proof of Theorem~\ref{thmWeakSol} that in the general case, our weak solution $(u,\mu)$ of the two-component Camassa--Holm system~\eqref{eqnmy2CH} can be approximated by a sequence of conservative multi-peakon solutions in a certain way; cf.\ \cite{hora06, hora08}.
  \end{remark}
   
 We conclude this section with a comment on the long-time behavior of the conservative Camassa--Holm flow: 
 Upon employing the method introduced in~\cite{CouplingProblem}, it is possible to show that our weak solution $(u,\mu)$ of the two-component Camassa--Holm system~\eqref{eqnmy2CH} asymptotically splits into a (in general infinite) train of single peakons, each corresponding to an eigenvalue $\lambda\in\sigma_0$ of the underlying spectral problem.  
 For classical solutions of the Camassa--Holm equation~\eqref{eqnCH}, this behavior was anticipated by McKean~\cite{mc03} (in accordance with numerical observations in~\cite{cahohy94}) and first proved in \cite{IsospecCH} (see also \cite{besasz00} and \cite{ConservMP} for the multi-peakon case and \cite{li09} for a particular class of low-regularity solutions).

\appendix

\section{The basic differential equation}\label{appRelCan}

 Throughout this appendix, let $u$ be a real-valued function in $H^{1}_{\loc}(\R)$ and $\dip$ be a non-negative Borel measure on $\R$.  
 We define the distribution $\omega$ in $H^{-1}_\loc(\R)$ by\footnote{The space $H^1_\cc(\R)$ consists of all those functions in $H^1(\R)$ which have compact support.} 
\begin{align}\label{eqnDefomegaapp}
 \omega(h) = \int_\R u(x)h(x)dx + \int_\R u'(x)h'(x)dx, \quad h\in H^1_\cc(\R), 
\end{align}
 so that $\omega=u-u''$ formally, and consider the ordinary differential equation 
 \begin{align}\label{eqnDEho}
  -f'' + \frac{1}{4} f = z\, \omega f + z^2 \dip f, 
 \end{align}
 where $z$ is a complex spectral parameter. 
 Of course, this differential equation has to be understood in a distributional sense in general; cf.\ \cite{IndefiniteString, gewe14, sash03}.   
  
  \begin{definition}\label{defSolution}
  A solution of~\eqref{eqnDEho} is a function $f\in H^1_{\loc}(\R)$ such that 
 \begin{align}\label{eqnDEweakform}
   \int_{\R} f'(x) h'(x) dx + \frac{1}{4} \int_\R f(x)h(x)dx = z\, \omega(fh) + z^2 \int_\R f h \,d\dip 
 \end{align} 
 for every function $h\in H^1_\cc(\R)$.
 \end{definition}
 
 
 In order to make this notion of solution more accessible, we will first show that the differential equation~\eqref{eqnDEho} is equivalent to the first order system  
 \begin{align}\begin{split}\label{eqnEquCanSys}
 \begin{pmatrix} 0 & 1 \\ -1 & 0 \end{pmatrix}F_\pm' & = \mp\frac{1}{2} \begin{pmatrix} 0 & 1 \\ 1 & 0 \end{pmatrix} F_\pm + z \begin{pmatrix} \alpha_\pm^2  & \alpha_\pm \\ \alpha_\pm & 1 \end{pmatrix} F_\pm + z\begin{pmatrix} \dip & 0 \\ 0 & 0 \end{pmatrix} F_\pm,
\end{split}\end{align} 
 where we introduced $\alpha_\pm=- u'\pm u$. 
Since $\dip$ is allowed to be a genuine Borel measure, this system has to be understood as a measure differential equation \cite{at64, be89, MeasureSL, pe88} in general. 
Solutions of such an equation are not necessarily continuous but only locally of bounded variation and, in order to guarantee unique solvability of initial value problems, one has to impose a suitable normalization. 
Here we will require solutions to be left-continuous, which is implicitly contained in the following definition: 
A solution of the system~\eqref{eqnEquCanSys} is a function $F_\pm:\R\rightarrow\C^2$ with locally bounded variation such that 
\begin{align}\begin{split}\label{eqnEquCanSysInt}
   \left.F_\pm\right|_{x}^{y} =  \mp \frac{1}{2} \int_x^y \begin{pmatrix}  -1  & 0 \\ 0 & 1 \end{pmatrix}  F_\pm(s) ds & + z \int_x^y \begin{pmatrix}-\alpha_\pm(s)  & -1 \\ \alpha_\pm(s)^2 & \alpha_\pm(s) \end{pmatrix} F_\pm(s)ds \\
        & + z \int_{x}^y \begin{pmatrix} 0  & 0 \\ 1 & 0 \end{pmatrix} F_\pm \,d\dip   
\end{split}\end{align}
for all $x$, $y\in\R$. In this case, the first component of $F_\pm$ is clearly locally absolutely continuous and the second component is left-continuous; cf.\ \eqref{eqnDefintmu}. 
With this notion of solutions, initial value problems for the system~\eqref{eqnEquCanSys} are always uniquely solvable; see, for example, \cite[Section~11.8]{at64}, \cite[Theorem~1.1]{be89}, \cite[Theorem~A.2]{MeasureSL}.

\begin{lemma}\label{lemEquCanSys}
 If the function $f$ is a solution of the differential equation~\eqref{eqnDEho}, then there is a unique left-continuous function $f^\qd$ such that 
 \begin{align}\label{eqnfqpm} 
     f^\qd(x) = f'(x) - z u'(x) f(x) 
\end{align} 
 for almost all $x\in\R$ and the function 
 \begin{align}\label{eqnEquCanSysVec}
  \begin{pmatrix} z f \\ -f^\qd \pm\left(\frac{1}{2} - z u\right)f \end{pmatrix}  
 \end{align}
 is a solution of the system~\eqref{eqnEquCanSys}. 
 Conversely, if the function $F_\pm$ is a solution of the system~\eqref{eqnEquCanSys}, then its first component is a solution of the differential equation~\eqref{eqnDEho}. 
\end{lemma}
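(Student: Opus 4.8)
The statement is an equivalence between the scalar equation~\eqref{eqnDEho} (interpreted weakly as in Definition~\ref{defSolution}) and the first-order measure system~\eqref{eqnEquCanSys}, together with the construction of the auxiliary quasi-derivative $f^\qd$. The plan is to work entirely at the level of the integral (weak) formulations~\eqref{eqnDEweakform} and~\eqref{eqnEquCanSysInt}, so that one never has to differentiate the distribution $\omega$ directly. The guiding idea is that the combination $f' - zu'f$ is the object that has a well-defined left-continuous representative, since $f'$ only exists in $L^2_\loc$ while the distributional part of the equation forces jumps governed by $\dip$.

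\emph{From the scalar equation to the system.}
First I would take a solution $f\in H^1_\loc(\R)$ of~\eqref{eqnDEho} and read~\eqref{eqnDEweakform} as a statement about a distributional identity. Choosing test functions $h$ supported in a small interval, the weak form says precisely that the distribution $-f'' + \frac14 f - z\omega f - z^2\dip f$ vanishes; since $\omega f = uf - (u''f)$ can be rewritten using $\omega(fh)=\int uf h + \int u'(fh)'\,dx$, I would expand $(fh)' = f'h + fh'$ and isolate the term $\int u' f' h$ and $\int u' f h'$. The effect is that the genuinely singular contribution to $f''$ comes only from the measure $z^2\dip f$ plus the $zu'$ term, and this is exactly what the definition~\eqref{eqnfqpm} is designed to absorb. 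Concretely, I would \emph{define} $f^\qd$ by prescribing its values through the integrated equation: set $f^\qd$ to be the left-continuous function whose increments satisfy
\begin{align*}
 f^\qd\bigr|_x^y = \int_x^y \Bigl(\tfrac14 f - zuf - z^2 (u')^2 f\Bigr)\,ds - zu'f\bigr|_x^y - z^2\int_x^y f\,d\dip,
\end{align*}
and then verify that~\eqref{eqnfqpm} holds a.e.\ and that the vector in~\eqref{eqnEquCanSysVec} satisfies~\eqref{eqnEquCanSysInt}. This is a matter of assembling the two scalar relations (one for the absolutely continuous first component $zf$, using $(zf)' = zf'= z(f^\qd+zu'f)$, and one for the left-continuous second component) into the matrix form, matching the coefficient matrices $\alpha_\pm^2,\alpha_\pm,1$ and the $\dip$-block against $\alpha_\pm=-u'\pm u$ by direct substitution.

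\emph{From the system back to the scalar equation.}
For the converse I would take a solution $F_\pm$ of~\eqref{eqnEquCanSysInt}, whose first component is locally absolutely continuous, call it $zf$ (so $f\in H^1_\loc$), and whose second component is left-continuous. Reading off the two scalar integral identities encoded in~\eqref{eqnEquCanSysInt}, the first gives $f' = f^\qd + zu'f$ a.e.\ (recovering~\eqref{eqnfqpm}), and the second, after integration by parts~\eqref{eqnPI} to move the $\dip$-integral and the $u'$-term, reproduces exactly the weak identity~\eqref{eqnDEweakform} when tested against an arbitrary $h\in H^1_\cc(\R)$. The uniqueness of $f^\qd$ is immediate from left-continuity together with~\eqref{eqnfqpm}, since two left-continuous functions agreeing a.e.\ coincide.

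\emph{Main obstacle.}
The delicate point is the bookkeeping of the quasi-derivative across the singular support of $\dip$: $f'$ is merely an $L^2_\loc$ function with no pointwise meaning, whereas $f^\qd$ must be a genuine left-continuous function of bounded variation, so the jump relation $f^\qd(x^+)-f^\qd(x) = -z^2 f(x)\dip(\{x\})$ has to be extracted cleanly from~\eqref{eqnEquCanSysInt} and shown to be consistent with~\eqref{eqnfqpm} holding only almost everywhere. I expect the heart of the argument to be showing that the combination $f'-zu'f$ indeed admits a left-continuous representative, which is where the integration-by-parts formula~\eqref{eqnPI} and the left-continuity convention fixed in~\eqref{eqnDefintmu} do the real work; everything else is a careful but routine matching of coefficients between the weak form~\eqref{eqnDEweakform} and the integrated system~\eqref{eqnEquCanSysInt}.
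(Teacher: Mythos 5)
Your overall route is in fact the paper's route: stay at the level of integrated identities, introduce $f^\qd$ through its increments, match coefficients against $\alpha_\pm=-u'\pm u$ to obtain the system, and in the converse direction read \eqref{eqnfqpm} off the first component and push the second component back into the weak form. The problem is that the formula you propose as the \emph{definition} of $f^\qd$ is both ill-defined and incorrect. The boundary term $-z\bigl((u'f)(y)-(u'f)(x)\bigr)$ uses pointwise values of $u'$, which is merely an $L^2_\loc$ function; avoiding exactly such evaluations is the reason $f^\qd$ is introduced in the first place. Even formally the right-hand side is not the increment of $f'-zu'f$: writing formally $(u'f)(y)-(u'f)(x)=\int_x^y u''f+u'f'\,ds$, your prescription differs from the correct one by $-z\int_x^y u''f\,ds-z^2\int_x^y u'(s)^2f(s)\,ds$, which does not vanish. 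The correct increment relation is the paper's \eqref{eqnDEweakderiv},
\begin{align*}
 f^\qd(y)-f^\qd(x) = \frac{1}{4}\int_x^y f(s)\,ds - z\int_x^y u(s)f(s)+u'(s)f'(s)\,ds - z^2\int_x^y f\,d\dip,
\end{align*}
whose integrand contains the locally integrable product $u'f'$, no $(u')^2$ term and no boundary term. The $(u')^2$ term appears only after substituting $f'=f^\qd+zu'f$, and then the price is the integral term $-z\int_x^y u'(s)f^\qd(s)\,ds$, not a boundary term: you have conflated the two equivalent forms into a third, inequivalent one. With your definition, the subsequent ``verification'' that \eqref{eqnfqpm} holds almost everywhere would fail.

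The second gap is that this verification is in any case not where the argument can start: one has to \emph{prove} that $f'-zu'f$ possesses a left-continuous representative of locally bounded variation, and that cannot be done by decreeing increments and checking afterwards. The paper obtains it in one stroke: integrate by parts in \eqref{eqnDEweakform} so that every term is paired with $h'$,
\begin{align*}
   \int_\R h'(x) \biggl( f'(x) - z u'(x)f(x) - \frac{1}{4} \int_c^x f(s)\,ds + z\int_c^x u(s)f(s)+u'(s)f'(s)\,ds + z^2 \int_c^x f \,d\dip \biggr)\, dx = 0
\end{align*}
for all $h\in H^1_\cc(\R)$, and then invoke the du Bois--Reymond lemma (a locally integrable function $g$ with $\int_\R h'g\,dx=0$ for all $h\in H^1_\cc(\R)$ is constant almost everywhere). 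This single step delivers existence, uniqueness, left-continuity (via the convention \eqref{eqnDefintmu}) and the correct increment formula simultaneously; and the same equivalence between \eqref{eqnDEweakform} and \eqref{eqnDEweakderiv} is what closes the converse direction after the first-component relation is substituted into the second component of \eqref{eqnEquCanSysInt}. Your outline gestures at this manipulation but files it under routine bookkeeping; the erroneous formula indicates it was never actually carried through.
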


\begin{proof}
  First one notes that for every $c\in\R$ the equation~\eqref{eqnDEweakform} takes the form
  \begin{align*}
   \int_\R h'(x) \left( f'(x) - z u'(x)f(x) - \frac{1}{4} \int_c^x f(s)ds\right. & + z\int_c^x u(s)f(s)+u'(s)f'(s)\,ds  \\ & + \left. z^2 \int_c^x f \,d\dip \right) dx = 0
  \end{align*}
 upon integrating by parts. 
 Now from this we infer that a function $f\in H^1_\loc(\R)$ is a solution of~\eqref{eqnDEho} if and only if there is a $c\in\R$ and a constant $d\in\C$ such that 
 \begin{align}\begin{split}\label{eqnDEweakderiv}
  f'(x) - z u'(x)f(x) = d +  \frac{1}{4} \int_c^x f(s)ds & -z\int_c^x u(s)f(s)+u'(s)f'(s)\,ds \\ &  - z^2 \int_c^x f \,d\dip
 \end{split}\end{align}
 for almost all $x\in\R$. 
 So if $f$ is a solution of~\eqref{eqnDEho}, then this guarantees that there is a unique left-continuous function $f^\qd$ such that~\eqref{eqnfqpm} holds for almost all $x\in\R$. 
 Let us denote the second component in~\eqref{eqnEquCanSysVec} with  $-f^\qpm$ so that  
 \begin{align*}
  f^\qpm(x) = f'(x) \mp \frac{1}{2} f(x) + z\alpha_\pm(x) f(x)
 \end{align*}  
 for almost all $x\in\R$. 
 Then it is straightforward to show that 
 \begin{align*}
  \pm\frac{1}{2} & \int_x^y  f^\qpm(s)ds + z\int_x^y \alpha_\pm(s)^2 zf(s) - \alpha_\pm(s) f^\qpm(s)\,ds \\
    & \qquad = \left.\pm \biggl(\frac{1}{2}  - z u \biggr)f\right|_{x}^y - \frac{1}{4} \int_x^y f(s)ds + z \int_x^y u(s)f(s) + u'(s)f'(s)\,ds
 \end{align*}
 for all $x$, $y\in\R$. 
 In combination with~\eqref{eqnDEweakderiv}, we end up with  
  \begin{align*}
  \left. -f^\qpm \right|_{x}^{y} = \pm\frac{1}{2} \int_x^y f^\qpm(s)ds & + z\int_x^y \alpha_\pm(s)^2 zf(s) - \alpha_\pm(s) f^\qpm(s)\,ds  + z \int_x^y zf \,d\dip
  \end{align*}
 for all $x$, $y\in\R$, which shows that~\eqref{eqnEquCanSysVec} is a solution of the system~\eqref{eqnEquCanSys}. 
 
 Now suppose that $F_\pm$ is a solution of the system~\eqref{eqnEquCanSys} and denote the respective components with subscripts.  
 The first component of~\eqref{eqnEquCanSysInt} shows that $F_{\pm,1}$ belongs to $H^1_\loc(\R)$ with 
 \begin{align}\label{eqnDEweakint1comp}
  z F_{\pm,2}(x) = -F_{\pm,1}'(x)  \pm \frac{1}{2}F_{\pm,1}(x) - z \alpha_\pm(x) F_{\pm,1}(x) 
 \end{align}
 for almost all $x\in\R$. In combination with the second component of~\eqref{eqnEquCanSysInt} this gives 
 \begin{align*}
  F_{\pm,1}'(x) & =  \pm\frac{1}{2} F_{\pm,1}(x) -  z\alpha_\pm(x) F_{\pm,1}(x) - z F_{\pm,2}(c) \pm\frac{1}{2} \int_c^x z F_{\pm,2}(s)ds \\ & \qquad - z^2\int_c^x \alpha_\pm(s)^2 F_{\pm,1}(s)ds  -z \int_c^x \alpha_\pm(s)\, zF_{\pm,2}(s)ds - z^2 \int_c^x F_{\pm,1} \,d\dip 
 \end{align*}
 for some $c\in\R$ and almost all $x\in\R$.  
 Plugging~\eqref{eqnDEweakint1comp} twice into this equation, we see that~\eqref{eqnDEweakderiv} holds with $f$ replaced by $F_{\pm,1}$ for some constant $d$.
 As noted before, this guarantees that $F_{\pm,1}$ is a solution of the differential equation~\eqref{eqnDEho}.  
\end{proof}
  
 The auxiliary function $f^\qd$ introduced in Lemma~\ref{lemEquCanSys} allows us to state the following basic existence and uniqueness result for the differential equation~\eqref{eqnDEho}. 
  
\begin{corollary}\label{corEE2}
 For every $c\in\R$ and $d_1$, $d_2\in\C$ there is a unique solution $f$ of the differential equation~\eqref{eqnDEho} with the initial conditions
 \begin{align}\label{eqnDEiv}
 f(c) = d_1 \qquad\text{and}\qquad  f^\qd(c) = d_2.
 \end{align} 
 If $d_1$, $d_2$ and $z$ are real, then the solution $f$ is real-valued as well. 
\end{corollary}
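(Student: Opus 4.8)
The plan is to reduce Corollary~\ref{corEE2} to the unique solvability of initial value problems for the first order system~\eqref{eqnEquCanSys}, which is already available from the cited references on measure differential equations. The correspondence provided by Lemma~\ref{lemEquCanSys} is precisely the tool for this: it translates between solutions $f$ of~\eqref{eqnDEho} together with the auxiliary quantity $f^\qd$ on the one side, and solutions $F_\pm$ of~\eqref{eqnEquCanSys} on the other. Thus the whole statement should follow by reading off the correct initial data for the system and invoking the bijective nature of that correspondence.

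First I would fix $c\in\R$ and $d_1,d_2\in\C$ and translate the prescribed data $f(c)=d_1$, $f^\qd(c)=d_2$ into initial data for the vector~\eqref{eqnEquCanSysVec}. Choosing the plus sign for definiteness, the vector at the point $c$ reads
\begin{align*}
 \begin{pmatrix} z f(c) \\ -f^\qd(c) + \left(\tfrac{1}{2} - z u(c)\right)f(c) \end{pmatrix} = \begin{pmatrix} z d_1 \\ -d_2 + \left(\tfrac{1}{2} - z u(c)\right)d_1 \end{pmatrix},
\end{align*}
which is a well-defined element of $\C^2$ (note $u$ has a genuine pointwise value at $c$ since $u\in H^1_\loc(\R)$ is continuous). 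By the unique solvability of initial value problems for~\eqref{eqnEquCanSys} quoted after~\eqref{eqnEquCanSysInt}, there is a unique solution $F_+$ of the system taking this value at $c$. By the converse direction of Lemma~\ref{lemEquCanSys}, its first component $F_{+,1}$ is a solution of~\eqref{eqnDEho}, and I would check that $F_{+,1}(c)=d_1$ and that the associated $f^\qd$ at $c$ equals $d_2$, using the defining relation~\eqref{eqnDEweakint1comp} to recover $f^\qd$ from the second component. This yields existence. For uniqueness, any solution $f$ of~\eqref{eqnDEho} with the given initial conditions produces, via the forward direction of Lemma~\ref{lemEquCanSys}, a solution of~\eqref{eqnEquCanSys} with the same initial vector at $c$; by uniqueness for the system these vector solutions coincide, hence so do their first components, giving $f$ uniquely.

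For the reality statement, I would observe that when $d_1$, $d_2$ and $z$ are all real, the initial vector displayed above is real, and the coefficient matrices appearing in~\eqref{eqnEquCanSysInt} are real (since $\alpha_\pm=-u'\pm u$ and $\dip$ are real). Hence the unique solution $F_+$ of the system is real-valued, and therefore so is $f=F_{+,1}$. The main obstacle I anticipate is not conceptual but bookkeeping: one must be careful that the passage through $f^\qd$ is genuinely invertible, i.e.\ that prescribing $(f(c),f^\qd(c))$ is equivalent to prescribing the vector~\eqref{eqnEquCanSysVec} at $c$, which requires $z\neq0$ for the first component to determine $f(c)$. The case $z=0$ must be handled separately, where~\eqref{eqnDEho} degenerates to $-f''+\tfrac14 f=0$ and the claim is the classical existence and uniqueness for a regular second order ODE with the standard data $f(c)$, $f'(c)=f^\qd(c)$; this is routine and can be dispatched in a line.
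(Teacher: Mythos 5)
Your proposal is correct and follows essentially the same route as the paper: the case $z=0$ is dispatched by classical ODE theory, and for $z\neq 0$ the claim is reduced, via the bijective correspondence of Lemma~\ref{lemEquCanSys}, to the unique solvability of initial value problems for the measure differential system~\eqref{eqnEquCanSys}, with realness following from uniqueness and the realness of the coefficient matrices. Your write-up merely makes explicit the bookkeeping (the invertibility of the map $(f(c),f^\qd(c))\mapsto$ initial vector, which indeed requires $z\neq0$) that the paper leaves implicit.
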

  
  \begin{proof}
   If $z$ is zero, then the solutions of the differential equation~\eqref{eqnDEho} are given explicitly and the claim is obvious. 
   Otherwise, it follows from unique solvability of initial value problems for the system~\eqref{eqnEquCanSys} in conjunction with Lemma~\ref{lemEquCanSys}. 
  \end{proof}
    
 We are now left to introduce the Wronskian $W(f,g)$ of two solutions $f$, $g$ of the differential equation~\eqref{eqnDEho}.
 Although the following result is essentially a consequence of Lemma~\ref{lemEquCanSys} and the fact that the traces of the matrices in~\eqref{eqnEquCanSysInt} vanish, we provide an independent direct proof.  

\begin{corollary}\label{corWronskian}
 For any two solutions $f$, $g$ of the differential equation~\eqref{eqnDEho} there is a unique complex number $W(f,g)$ such that 
\begin{align}\label{eqnDefWron}
W(f,g) = f(x)g'(x) -f'(x)g(x)
\end{align} 
 for almost all $x\in\R$. This number is zero if and only if the functions $f$ and $g$ are linearly dependent.
\end{corollary}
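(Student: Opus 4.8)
The plan is to reduce everything to the auxiliary left-continuous function $f^\qd$ of Lemma~\ref{lemEquCanSys}, since the ordinary derivatives $f'$ and $g'$ lie only in $L^2_\loc(\R)$, so that the naive expression $fg'-f'g$ cannot be differentiated classically. Writing $f'=f^\qd+zu'f$ and $g'=g^\qd+zu'g$ almost everywhere by means of~\eqref{eqnfqpm}, the troublesome $zu'$-terms cancel and one obtains $fg'-f'g=fg^\qd-f^\qd g$ for almost all $x\in\R$. The advantage of the right-hand side is that $f$ and $g$ are locally absolutely continuous (being in $H^1_\loc$), whereas $f^\qd$ and $g^\qd$ are left-continuous functions of locally bounded variation whose driving measures are read off explicitly from~\eqref{eqnDEweakderiv}, namely $df^\qd=\tfrac14 f\,dx-z(uf+u'f')\,dx-z^2 f\,d\dip$ and analogously for $dg^\qd$.

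The core step is to show that $fg^\qd-f^\qd g$ is constant. First I would apply the integration-by-parts formula~\eqref{eqnPI} to each of the products $fg^\qd$ and $f^\qd g$, with the absolutely continuous factor playing the role of the differentiable function and $g^\qd$, resp.\ $f^\qd$, the role of the left-continuous distribution function, obtaining
\[
 \bigl(fg^\qd-gf^\qd\bigr)(y)-\bigl(fg^\qd-gf^\qd\bigr)(x)=\int_x^y\bigl(f\,dg^\qd-g\,df^\qd\bigr)+\int_x^y\bigl(g^\qd f'-f^\qd g'\bigr)\,ds.
\]
Substituting the explicit measures, the terms carrying $\tfrac14$, $zu$ and $z^2\dip$ cancel pairwise by symmetry, leaving $f\,dg^\qd-g\,df^\qd=-zu'(fg'-gf')\,dx$; on the other hand, inserting $g^\qd=g'-zu'g$ and $f^\qd=f'-zu'f$ gives $g^\qd f'-f^\qd g'=zu'(fg'-gf')$. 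The two contributions are negatives of one another, so the right-hand side vanishes for all $x,y\in\R$. Hence $fg^\qd-f^\qd g$ is a constant, which I name $W(f,g)$; it equals $fg'-f'g$ almost everywhere, and uniqueness of the number is then immediate.

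For the characterization, if $f$ and $g$ are linearly dependent then $fg'-f'g=0$ almost everywhere and thus $W(f,g)=0$. Conversely, suppose $W(f,g)=0$, so that $f(x)g^\qd(x)=f^\qd(x)g(x)$ for every $x\in\R$. Fixing any $c\in\R$, this says precisely that the vectors $(f(c),f^\qd(c))$ and $(g(c),g^\qd(c))$ in $\C^2$ are linearly dependent. If both vanish, then $f\equiv g\equiv 0$ by the uniqueness in Corollary~\ref{corEE2}; otherwise one of them, say $(f(c),f^\qd(c))$, is nonzero and the other is a scalar multiple $\lambda$ of it, whence $g$ and $\lambda f$ solve~\eqref{eqnDEho} with identical data at $c$ and Corollary~\ref{corEE2} forces $g=\lambda f$. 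In either case $f$ and $g$ are linearly dependent.

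The hard part is the second paragraph: the low regularity of the coefficients means the Wronskian must be handled through the measure-differential calculus rather than classical differentiation, and one must organize the two integrations by parts so that the Lebesgue-absolutely-continuous part, the $u'$-part and the singular $\dip$-part all cancel cleanly. Once the explicit form of the measures $df^\qd$, $dg^\qd$ from~\eqref{eqnDEweakderiv} is in hand, this becomes pure bookkeeping, but it is where all the content lies; the equivalence with linear dependence is then a routine consequence of unique solvability of initial value problems.
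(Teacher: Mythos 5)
Your proposal is correct and follows essentially the same route as the paper: the reduction $fg'-f'g=fg^\qd-f^\qd g$ (the paper's~\eqref{eqnWronskiAE}), constancy of this quantity via the integration by parts formula~\eqref{eqnPI} applied to the measure representation coming from~\eqref{eqnDEweakderiv} (the paper packages this as the symmetric identity~\eqref{eqnSIP} and subtracts, while you cancel the terms by hand, which is the same computation), and the linear dependence characterization via the uniqueness part of Corollary~\ref{corEE2}.
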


\begin{proof}
 First of all, note that if $f$ is a solution of the differential equation~\eqref{eqnDEho} and $h\in H^1_\loc(\R)$, then~\eqref{eqnDEweakderiv} and the integration by parts formula~\eqref{eqnPI} show that 
  \begin{align}\begin{split}\label{eqnSIP}
    \left. f^\qd h\right|_{x}^y & = \int_x^y f'(s)h'(s)ds + \frac{1}{4} \int_x^y f(s)h(s)ds \\ & \qquad - z\int_x^y  u(s)f(s)h(s) + u'(s)(fh)'(s)\,ds - z^2 \int_x^y fh\, d\dip
  \end{split}\end{align}
 for all $x$, $y\in\R$. 
 If $g$ is another solution of the differential equation~\eqref{eqnDEho}, then 
 \begin{align}\label{eqnWronskiAE}
  f(x)g'(x) - f'(x) g(x) =  f(x)g^\qd(x) - f^\qd(x) g(x)
 \end{align}
 for almost all $x\in\R$. 
 Employing~\eqref{eqnSIP}, one sees that the right-hand side of~\eqref{eqnWronskiAE} is constant, which proves existence of the number $W(f,g)$. 
 The remaining claim follows in a standard way upon utilizing the uniqueness part in Corollary~\ref{corEE2}. 
\end{proof}

Let us mention that one can easily get rid of the constant potential term in the first order system~\eqref{eqnEquCanSys} by scaling solutions with the matrix function
\begin{align}
 \begin{pmatrix} \E^{\mp\frac{x}{2}} & 0 \\ 0 & \E^{\pm\frac{x}{2}} \end{pmatrix}, \quad x\in\R, 
\end{align}
at the cost of rescaling the remaining matrix coefficients in~\eqref{eqnEquCanSys} as well. 
 It is even possible to further transform~\eqref{eqnEquCanSys} into a canonical system in standard form
\begin{align}
  \begin{pmatrix} 0 & 1 \\ -1 & 0 \end{pmatrix} G_\pm' = z H_\pm G_\pm,
\end{align}
 with a locally integrable, non-negative and trace normed matrix function $H_\pm$. 
 Since this is of no need for our purposes, we will not do this here but refer to \cite[Proof of Theorem~6.1]{IndefiniteString}, where such a transformation has been employed.  
 

\end{document}